\def\numberwithin#1#2{\@ifundefined{c@#1}{\@nocnterrr}{%
  \@ifundefined{c@#2}{\@nocnterr}{%
  \@addtoreset{#1}{#2}%
  \toks@\expandafter\expandafter\expandafter{\csname the#1\endcsname}%
  \expandafter\xdef\csname the#1\endcsname
    {\expandafter\noexpand\csname the#2\endcsname
     .\the\toks@}}}}
\numberwithin{equation}{section}
\newtheorem{theorem}{Theorem}
\numberwithin{theorem}{section}
\newtheorem{corollary}[theorem]{Corollary}
\newtheorem{example}[theorem]{Example}
\newtheorem{lemma}[theorem]{Lemma}
\newtheorem{proposition}[theorem]{Proposition}
\newtheorem{remark}[theorem]{Remark}
\newenvironment{rmk}{\begin{remark} \em}{\end{remark}}
\newcommand{\tr}{\operatorname{tr}}
\newcommand{\Rc}{\operatorname{Rc}}
\newcommand{\Int}{\operatorname{int}}
\newcommand{\SO}{{\rm SO}}
\newcommand{\SU}{{\rm SU}}
\newcommand{\U}{{\rm U}}
\newcommand{\tf}{\mbox{${\mathfrak t}$}}
\newcommand{\C}{\mbox{${\mathbb C}$}}
\newcommand{\PP}{\mbox{${\mathbb P}$}}
\newcommand{\R}{\mbox{${\mathbb R}$}}
\newcommand{\Z}{\mbox{${\mathbb Z}$}}
\begin{document}
\title{Ancient Ricci Flow Solutions on Bundles}

\author{Peng Lu}
\address{Department of Mathematics, University of Oregon, Eugene, OR 97403-1222}
\email{penglu@uoregon.edu}
\thanks{}

\author{Y. K. Wang}
\address{Department of Mathematics and Statistics, McMaster
     University, Hamilton, Ontario, L8S 4K1, CANADA}
\email{wang@mcmaster.ca}
\thanks{P.L. is partially supported by Simons Foundation through Collaboration Grant 229727.
Y.K.W. is partially supported by NSERC Grant No. OPG0009421.}

\date{\small revised \today}

\begin{abstract}
We generalize the circle bundle examples of ancient solutions of the Ricci flow discovered
by Bakas, Kong, and Ni to a class of principal torus bundles over an arbitrary finite product of
Fano K\"ahler-Einstein manifolds studied by Wang and Ziller in the context of Einstein geometry.
As a result, continuous families of $\kappa$-collapsed and $\kappa$-noncollapsed ancient solutions
of type I are obtained on circle bundles for all odd dimensions $\geq 7$.
In dimension $7$ such examples moreover exist on pairs of homeomorphic but not diffeomorphic
manifolds.   Continuous families of $\kappa$-collapsed ancient solutions
of type I are also obtained on torus bundles for all dimensions $\geq 8$.

\smallskip

\noindent \textbf{Keywords}. Ricci flow, ancient solutions, torus bundles, Fano manifolds,
Riemannian submersions

\smallskip
\noindent \textbf{MSC (2010)}. 53C44
\end{abstract}

\maketitle

%%%%%%%%%%%%%%%%%%%%%%%%%%%%%%%%%%%%%%%%%%%%%%%%%%%%%%%%%%%
%%%%%%%%%%%%%%%%%%%%%%%%%%%%%%%%%%%%%%%%%%%%%%%%%%%%%%%%%%%
\section{\bf Introduction}

An ancient solution $g(t)$ of the Ricci flow in this paper is one that exists on a half infinite interval of
the type $(-\infty, T)$ for some finite time $T$ and with $g(t)$ complete for each $t$.
These solutions are important because they arise naturally when finite time
singular solutions of the flow are blown up. Rigidity phenomena for ancient solutions have been established by
L. Ni (\cite{Ni09}), and by S. Brendle, G. Huisken, and C. Sinestrari (\cite{BHS11}) under certain non-negativity
conditions on the full curvature tensor.  For dimension two, classification theorems were achieved by
P. Daskalopoulos, R. Hamilton, and N. Sesum (\cite{DHS12}) for closed surfaces and by
R. Hamilton (\cite{Ha95}), S.C. Chu (\cite{Chu07}), and  P. Daskalopoulos and N. Sesum (\cite{DS06})
for complete non-compact surfaces.

Examples of ancient solutions in higher dimensions do exist. The first examples (in dimension $3$) were due to
Fateev \cite{Fa96} and Perelman \cite{Pe02}. Other examples in dimension $3$ result from the work of X.D. Cao,
J. Guckenheimer, and L. Saloff-Coste (\cite{CS09}, \cite{CGS09}) on the backwards behavior of the Ricci
flow on locally homogeneous $3$-manifolds. Fateev's example was then generalized by
I. Bakas, S. L. Kong, and L. Ni (\cite{BKN12}) to odd-dimensional spheres, complex projective spaces,
and the twistor spaces of compact quaternion-K\"{a}hler manifolds.  By studying a system of two
$\operatorname{ODE}$s J. Lauret (\cite{La13}) gave explicit examples of $2$-parameter
families of left-invariant ancient solutions on any compact simple Lie group except $Sp(2k + 1)$.
Further homogeneous examples can be found in \cite{Bu14} and \cite{BLS16}. Finally, we mention
two inhomogeneous examples in dimension $4$: a compact ancient solution due to S. Brendle and N.
Kapouleas (\cite{BK14}) which is constructed by desingularizing an orbifold quotient of the flat torus, and
a non-compact inhomogeneous example due to R. Takahashi (\cite{Ta14}) which has the Euclidean
Schwarzschild metric as backwards limit.

In this article we shall be concerned with the construction of ancient solutions on closed
manifolds which are not highly symmetric except in special cases. Specifically, we shall
generalize the circle bundle examples in \cite[Theorem 5.1]{BKN12} to a class of principal
torus bundles over an arbitrary finite product of Fano KE (short for K\"ahler-Einstein)  manifolds.
One may view our work
here as a dynamic version of the work in \cite{WZ90}, although the aim, of course, is not
to reconstruct the Einstein metrics obtained in that work via the Ricci flow.

To describe our main results, let $P_Q$ denote a principal $r$-torus bundle with Euler classes
$Q$ over a product of $m$ arbitrary KE manifolds $(M_i, g_i)$ with positive
first Chern class. As in \cite{WZ90} the integral cohomology classes in $Q$ are taken to be allowable
rational linear combinations of the first Chern classes of $M_i$ subject to certain non-degeneracy
conditions explained in detail in \S 2. We consider the class of connection-type metrics on $P_Q$
determined by an arbitrary left-invariant metric on the torus fibres, arbitrary scalings of the
KE metrics $g_i$ by constants, and the harmonic representatives of the Euler classes in $Q$
with respect to the base metric chosen. This class of metrics is preserved by the Ricci flow,
and the Ricci flow equation simplifies to a system of nonlinear ODEs, which we analyse in some detail
to deduce

\vskip 0.3cm

\noindent{\bf Main Theorem.} {\em With assumptions as described above, on each total space $P_Q$

\vskip .1cm

\noindent $($a$)$ there is an $(m+r(r+1)/2-1)$-parameter family of ancient
solutions of type I for which the
time-rescaled backwards limit is a multiple of the product of the K\"ahler-Einstein metrics
on the base $M_1 \times \cdots \times M_m$. These solutions are $\kappa$-collapsed.

\noindent  $($b$)$ When $r=1$ and $m \geq 2$, there is a further $(m-1)$-parameter
family of ancient solutions of type I that is $\kappa$-noncollapsed. Their time-rescaled
backwards limit is a multiple of the Einstein metric on $P_Q$ found in \cite{WZ90}. }

\vskip 0.3cm

Detailed descriptions of the asymptotics and curvature properties of the above ancient
solutions can be found in Theorem \ref{curv--toruscase}, Theorems
\ref{thm flow near origin}--\ref{prop asym beha at infty}, and Theorem \ref{prop asym beha xi at infty}.

We also investigate the forward limits of our ancient solutions in some special situations
when $r=1$. First, we show that among the ancient solutions in part (a) of the above theorem
on a circle bundle there is a unique solution which develops a type I singularity in finite time
and whose time-rescaled forward limit is a multiple of the Einstein metric on the circle bundle
(see Theorem \ref{lem growth a bi}). Second, when $m=2$ we show that
the ancient solutions in part (b) of the above theorem develop a type I singularity in finite time
and their time-rescaled forward limit
is the product of a circle bundle over one of the base factors with a Euclidean space of the same
dimension as the other base factor. Furthermore, this limit is $\kappa$-noncollapsed for some
$\kappa > 0$ (see Theorem \ref{thm Curv property m=2 Omega 1 2 a b, tau}).

It should be noted that none of the above compact ancient solutions can occur as  finite-time
singularity models of the Ricci flow.
This is because such models must, on the one hand, be $\kappa$-noncollapsed at all
scales by the celebrated work of Perelman (\cite{Pe02}), and, on the other hand, be shrinking Ricci
solitons when the model is compact by a result of Z.L. Zhang (\cite{Zh07}).

However, there appears to be some independent interest in ancient solutions of the Ricci flow
beyond singularity analysis, as exemplified by the classification work of
Daskalopoulos-Hamilton-Sesum \cite{DHS12} and the work of physicists on the Renormalization
Group flow \cite{Fr85}. From this vantage point, our works shows that, at least among torus bundles,
families of non-solitonic ancient solutions of the Ricci flow (collapsed or not) with positive Ricci
curvature do exist in abundance. The torus bundles we consider here are moreover quite diverse in their
topological properties, as was shown in \cite{WZ90}.  In particular, they include manifolds
with infinitely many homotopy types as well as infinitely many homeomorphism types within a fixed
integral cohomology type. Furthermore,  at least in dimension $7$ (and most probably  for
infinitely many dimensions) they include pairs of homeomorphic but not diffeomorphic manifolds
\cite{KS88} (see Remark \ref{remark top type cir} for further discussion).

In the context of Einstein geometry, our investigation also uncovers some interesting
phenomena. As mentioned above, in the circle bundle case, there is a unique ancient solution
that develops a type I finite time
singularity whose time-rescaled blow-up limit is the Einstein metric constructed in  \cite{WZ90}.
One may regard this solution as giving a curve {\em that satisfies a geometric PDE} \lq\lq
connecting\rq\rq the product KE metric on the base (Theorem \ref{prop asym beha at infty}(i)),
which is a manifold of one
dimension less, to an Einstein metric on the circle bundle (Theorem \ref{lem growth a bi}(iii)).
A similar curve is constructed in Theorem \ref{thm Curv property m=2 Omega 1 2 a b, tau}(i).
Such solutions are especially intriguing in cases for which the total spaces are diffeomorphic
but the Einstein metrics belong to distinct path components of the Einstein moduli space.
For example, this happens already for dimension $5$: $S^2 \times S^3$ is a circle bundle
over $S^2 \times S^2$ in countably infinitely many ways, and the unit volume Einstein metrics
have Ricci curvature converging to zero.

In analysing the Ricci flow restricted to connection-type metrics, we are dealing with
an $\operatorname{ODE}$ system which is in general not bounded in the number of
dependent variables. After a suitable change of both independent and dependent variables,
this system becomes a polynomial system for which we find suitable bounded sets, monotonic
quantities, and differential inequalities. These are used to show that the solutions stay in these sets,
which in turn allows us to deduce that the solutions exist for all times approaching $-\infty$.
The geometric properties of the solutions are then established from various estimates derived
from the $\operatorname{ODE}$ system. In order to derive part (b) of our main result, we
also need to study the eigenvalues of the linearization of our polynomial system at the fixed
point corresponding to the Einstein metric mentioned in part (b) of the Main Theorem.

Finally, we want to point out an interesting correspondence between the Riemannian ancient
solutions of the Ricci flow we found and certain ancient solutions as well as immortal
solutions of the Ricci flow equations for appropriate pseudo-Riemannian metrics. More precisely,
if we replace some of the Fano KE factors in the base of our circle bundles by KE manifolds with
the same dimension but with {\em negative scalar curvature}, and use the ansatz
(\ref{eq family of Riem metric}) to define the pseudo-Riemannian metrics, we obtain both ancient
and immortal solutions for the corresponding pseudo-Riemannian Ricci flow on the resulting
bundles. For further details see Remark \ref{rk psudo Riem RF ancient}.

\vskip .1cm
The following is an outline of the rest of this article. In \S \ref{sec 2 RF eq}
 we derive the Ricci flow equation for metrics on torus bundles over a product of Fano KE manifolds
which satisfy the ansatz (\ref{eq family of Riem metric}). In \S \ref{sec 3 S1 bundle} we show the existence of
ancient solutions on the circle bundles and study their geometric behavior as time $t \rightarrow -\infty$.
We also show the existence of one particular ancient solution whose finite time singularity model
is the Einstein metric found in \cite[Theorem 1.4]{WZ90}. In \S \ref{subsec noncollapsing anc sol m2}
we study in detail some ancient solutions whose singularity model at time $t=-\infty$ is this Einstein metric.
In \S \ref{sec 4 torus bundles}  we prove the general existence of ancient solutions on torus bundles
and discuss their geometric behavior as time $t \rightarrow -\infty$.
In the appendix we compute the eigenvalues of a matrix related to the linearization of
our polynomial system at the Einstein metric.

In a follow-up article we will study ancient solutions of the Ricci flow on certain
 $(\SO(3) \times \cdots \times \SO(3))/\Delta \SO(3)$ fibre bundles
over an arbitrary finite product of compact quaternionic K\"ahler manifolds.

%%%%%%%%%%%%%%%%%%%%%%%%%%%%%%%%%%%%%%%%%%%%%%%%%%%%%%
%%%%%%%%%%%%%%%%%%%%%%%%%%%%%%%%%%%%%%%%%%%%%%%%%%%%%%

\section{\bf Ricci flow equations for connection-type metrics on certain torus bundles} \label{sec 2 RF eq}

Let $(M_i^{n_i}, g_i), \, i=1, \cdots, m$, be KE manifolds  of complex dimension $n_i$ with Ricci tensor
$\operatorname{Rc} (g_i)=p_i g_i$ for some nonzero $p_i \in \Z$.
Let $\omega_i$ be the K\"{a}hler form associated with metric $g_i$.
By a suitable homothetic change of $g_i$, we may make the following {\bf normalization assumption}
throughout this article: the cohomology class $\frac{1}{2\pi}[\omega_i]$ is integral and
represents a torsion free indivisible class in $H^2(M_i, \Z)$.

Let $\pi:P \rightarrow M_1 \times \cdots \times M_m$ be a smooth principal $r$-torus bundle.
If we identify the Lie algebra $\tf$ of the torus $T^r$ with $\R^r$ by choosing a basis
$\{e_\alpha; \alpha=1, \cdots, r\}$ for $\tf$ associated with a fixed decomposition $T^r
=S^1 \times \cdots \times S^1$, then we get an induced action of $S^1 \times \cdots \times S^1$ on $P$,
and the principal torus bundles over
$M_1 \times \cdots \times M_m$ are classified by $r$ Euler classes $\chi_\alpha, \, \alpha =1, \cdots,r$, in
$H^2(M_1 \times \cdots \times M_m; \,\Z) $. Here $\chi_\alpha$ is the Euler class of the orientable
circle bundle $P/T^{r-1} \rightarrow M_1 \times \cdots \times M_m$, where $T^{r-1}
\subset T^r$ is the subtorus with the $i$th $S^1$ factor deleted.
Let $A$ be an automorphism of $T^r$. $A$ induces a new decomposition of $T^r=S^1 \times \cdots \times
S^1$, and a new basis $\tilde{e}_\alpha =A_{\alpha \beta}e_\beta$, where matrix $A=(A_{\alpha \beta}
)_{r \times r}$ is an integral matrix with $\det (A) =\pm 1$.
The new induced action of $S^1 \times \cdots \times S^1$ on $P$ has associated Euler classes $\tilde{\chi}
_\alpha = (A^T)^{-1}_{\alpha \beta} \chi_\beta$.
 In the rest of the article we will fix a decomposition $T^r
=S^1 \times \cdots \times S^1$ and hence we will  have a fixed basis $\{e_\alpha; \alpha=1, \cdots, r\}$ for
the Lie algebra $\tf$ of $T^r$.

 We are interested in those principal torus
bundles whose Euler classes $\chi_\alpha, \, \alpha =1, \cdots,r$, are given by $\frac{1}{2\pi}
\sum_i q_{\alpha i} [\omega_i]$, where $q_{\alpha i}$ are integers. Let $Q$ denote the $r \times m$
 matrix $(q_{\alpha i})$.
Note that if we change the decomposition  $T^r =S^1 \times \cdots \times S^1$ by $A$ as discussed above,
then the new $\tilde{Q} =(A^T)^{-1} Q$, hence (a) the ranks of $Q$ and $\tilde{Q}$ are the same and (b) the
numbers of columns of zeros in $Q$ and in $\tilde{Q}$ are the same.
Also note that if we switch the order of factors in product $M_1 \times \cdots \times M_m$,
the new $Q$ matrix still satisfies
the properties (a) and (b). Hence we can make the following definition.
We call a principal $T^r$ bundle $P_Q$ \textbf{non-degenerate} if the associated
matrix $Q$ has rank $r$ and has no column of zeros. This assumption implies $r \leq m$.
When $r=m$, by applying a suitable automorphism (equivalently changing the decomposition of
the torus into a product of circles), one sees that $P$ is a finite quotient of a product
of circle bundles over $M_i$. Hence the non-trivial cases begin with $m=2$ for circle bundles
and $r=2, m=3$ for torus bundles, and the corresponding minimum dimensions of $P$ are
respectively $5$ and $8$.

On a torus bundle $P_Q$, using Hodge theory, we may choose an $\R^r$-valued principal connection
$\sigma =(\sigma_1, \cdots, \sigma_r)$ such that the curvature form $F^{\alpha}$ of
$\sigma_\alpha$ is given by
\begin{equation}
F^\alpha = d \sigma_{\alpha} = \sum_{i=1}^m \, q_{\alpha i} \,\omega_i.   \label{eq curv form T r connect}
\end{equation}
Throughout this article we will use the \textbf{convention} $\tau= -t$ as the backwards time.
Let $(h_{\alpha \beta}(\tau))  \doteqdot (h(e_\alpha,e_\beta))$ be a $1$-parameter family of
left-invariant metrics on $T^r$. Since $T^r$ is commutative, these metrics are automatically
also right-invariant. Using $\sigma$, we now obtain a family of Riemannian metrics on $P_Q$ given by
\begin{equation}
g_{h, \vec{b}} (\tau) = \sum_{\alpha, \beta=1}^r h_{\alpha \beta}(\tau)\, \sigma_\alpha( \cdot) \otimes
\sigma_\beta(\cdot) + \sum_{i=1}^m \,b_i(\tau)\, \pi_i^* g_i, \label{eq family of Riem metric}
\end{equation}
where $\pi_i=\tilde{\pi}_i \circ \pi$, $\tilde{\pi_i}: M_1 \times \cdots \times M_m \rightarrow M_i$
is the standard projection, and $b_i(\tau) > 0$.
We shall often abuse notation by referring to $\pi_i^*g_i$ as $g_i$.
The above family of metrics makes $\pi$ into a Riemannian submersion with totally geodesic fibres
for each fixed $\tau$.

The components of the Ricci tensor of $g_{h, \vec{b}} (\tau)$  are given by
(see \cite[p.224]{WZ90} and \cite[\S2.2]{BKN12})
\begin{subequations}
\begin{align}
& R(g_{h, \vec{b}}(\tau))_{\alpha\beta} = \sum_{\tilde{\alpha},\tilde{\beta}
=1}^r \sum_{i=1}^m \frac{1}{2}\, n_i q_{\tilde{\alpha} i}q_{\tilde{\beta}
i} \frac{h_{\alpha \tilde{\alpha}}(\tau) h_{\beta \tilde{\beta}}(\tau)}{b_i^2(\tau)},
\label{Ricci-toral-components} \\
& R(g_{h, \vec{b}} (\tau))_{k \alpha } = 0,  \label{eq Ricci curv formula}\\
& R(g_{h, \vec{b}} (\tau))_{kl} =  \operatorname{Rc}(g_i)_{kl} \frac{1}{b_i(\tau)}
-\sum_{\tilde{\alpha},\tilde{\beta}
=1}^r \frac{1}{2} \,  q_{\tilde{\alpha} i}q_{\tilde{\beta} i} (g_i)_{kl}\frac{
h_{\tilde{\alpha} \tilde{\beta}}(\tau)} {b_i^2(\tau)},
\end{align}
\end{subequations}
where in the last equation $k$ and $l$ are the indices used for an orthonormal frame
$\{e_k^{(i)}\}$ for the factor $M_i$ with respect to the metric $b_i(\tau)\,g_i$.
Notice that equation (\ref{eq Ricci curv formula}) automatically holds for all $\tau$ as a
result of our choice of connection because the harmonicity of the curvature form for
all values of $\tau$ is precisely the Yang-Mills condition for the connection metric
$g_{h, \vec{b}} (\tau)$ (see Proposition 9.36 of \cite{Bes87}).
 Hence the Ricci tensor can be written as
\begin{align}
\operatorname{Rc}(g_{h, \vec{b}}(\tau)) = & \sum_{\alpha,\beta, \tilde{\alpha},\tilde{\beta}
=1}^r \sum_{i=1}^m \frac{1}{2}\, n_i q_{\tilde{\alpha} i} q_{\tilde{\beta} i} \frac{h_{\alpha
\tilde{\alpha}}(\tau) h_{\beta \tilde{\beta}}(\tau) }
{b_i^2(\tau)} \sigma_\alpha \otimes \sigma_\beta  \notag \\
& +\sum_{i=1}^m \left ( \frac{ p_i }{b_i(\tau)} - \sum_{\tilde{\alpha},\tilde{\beta}=1}^r \frac{1}{2}
 q_{\tilde{\alpha} i}q_{\tilde{\beta} i} \frac{ h_{\tilde{\alpha} \tilde{\beta}}(\tau)}
{b_i^2(\tau)} \right) b_i(\tau) g_i. \label{eq Ricci curv circle bundle}
\end{align}

Since we choose $\tau = -t$, a solution to the Ricci flow for time $t \in [0, T)$
corresponds to a solution of the backwards Ricci flow for $\tau \in(-T, 0]$.
With this convention, the backwards Ricci flow of $g_{h, \vec{b}}(\tau)$ is given by the
$\operatorname{ODE}$ system
\begin{subequations}
\begin{align}
&\frac{d h_{\alpha \beta}}{d\tau} = \sum_{\tilde{\alpha},\tilde{\beta}
=1}^r \sum_{i=1}^m  n_i \,q_{\tilde{\alpha} i} q_{\tilde{\beta} i} \frac{h_{\alpha
\tilde{\alpha}}(\tau) h_{\beta \tilde{\beta}}(\tau) }{b_i^2(\tau)}, \,\,\,\, 1 \leq \alpha,
\beta \leq r, \label{eq RF torus r A1} \\
& \frac{d b_i}{d\tau} =  2p_i  - \sum_{\tilde{\alpha},\tilde{\beta}=1}^r
 q_{\tilde{\alpha} i}q_{\tilde{\beta} i} \frac{ h_{\tilde{\alpha} \tilde{\beta}}(\tau)}
 {b_i(\tau)}, \,\,\, 1 \leq i \leq m. \label{eq RF torus r A2}
\end{align}
\end{subequations}
If $g_{h, \vec{b}}(\tau)$ has long time existence, then $g_{h, \vec{b}}(\tau)$
is an ancient solution of the Ricci flow since our bundles are compact.

\begin{rmk} \label{degeneracy}
Note that if we require the diagonal condition
$h_{\alpha \beta}(\tau) =a_\alpha (\tau) \delta_{\alpha \beta}$ after
we fix an appropriate decomposition $T^r =S^1 \times \cdots \times S^1$,
 we get from (\ref{eq RF torus r A1})  that for $\alpha \neq \beta$
\[
\sum_{i=1}^m  n_i q_{ \alpha i} q_{\beta i} \frac{a_{\alpha}(\tau) a_{\beta}(\tau)
}{b_i^2(\tau)} =0 \qquad \text{ for all } \tau.
\]
Naively one would expect that $n_i q_{\alpha i} q_{\beta i}=0$ for any $i$ and $\alpha \neq \beta$.
This then implies following conditions on $q_{\alpha i}$.
After a permutation of the indices $i$, $q_{11}, \cdots, q_{1 i_1}$ would be
nonzero while $q_{1 i} =0$ for all $i> i_1$, and for each $\alpha=2, \cdots, r$, we can
arrange for $q_{\alpha i_{\alpha -1}+1},  \cdots, q_{\alpha i_{\alpha} }$ to be nonzero and
$q_{\alpha i} =0$ for all the other $i$. For each $\alpha =1, \cdots, r$ let $P_\alpha \rightarrow
M_{i_{\alpha-1}+1} \times \cdots M_{i_\alpha}$ be the principal circle bundle defined by first Chern
class $\sum_{i= i_{\alpha -1}+1}^{ i_\alpha} q_{\alpha i} \omega_i$. Let $\hat{\pi}_\alpha: M_1
\times \cdots \times M_m \rightarrow  M_{i_{\alpha-1}+1} \times \cdots \times M_{i_\alpha}$ be the
natural projection map. Then the principal torus bundle $P_Q$ is isomorphic to the product bundle
$\hat{\pi}_1^* P_1 \times \cdots \times \hat{\pi}_r^* P_r$, and the Ricci flow on $P_Q$ decouples to
the Ricci flows on the circle bundles $P_\alpha$.
Recall that  by our definition the circle bundle  $P_\alpha$ is non-degenerate if each $q_{\alpha i}$
is nonzero for $i=i_{\alpha -1}+1, \cdots, i_\alpha$.
This is the case for which we will prove the existence of ancient solutions in next section.
\end{rmk}

\begin{rmk} As input data for the torus bundles under consideration we can certainly take $(M_i, g_i)$
to be compact homogeneous KE spaces. These are precisely the coadjoint orbits of
the compact semisimple Lie groups with the induced metric, and the resulting torus bundles
will actually be homogeneous (see Proposition 3.1 in \cite{WZ90}). The resulting Ricci flows
are therefore homogeneous, as the Ricci flow preserves isometries. For this special case our
analysis partially overlaps with that done in \cite{Bu14}, \cite{Buz2}, \cite{Bo15}, and \cite{BLS16}.

On the other hand, this is not the generic situation and there are many concrete examples
of Fano KE manifolds whose isometry groups are far from being transitive. An interesting family
of cohomogeneity $3$ are the small deformations of the Mukai-Umemura $3$-fold \cite{Do08}.
Examples with at most a finite automorphism group include $\C\PP^2$ with $k$
generic points blown up ($4 \leq k \leq 8$). These surfaces even have a positive-dimensional
moduli space of complex structures, so that the KE metrics come in continuous families. See
Remark \ref{cohomogeneity} for more comments on the isometry groups of our ancient flow solutions.
\end{rmk}

%%%%%%%%%%%%%%%%%%%%%%%%%%%%%%%%%%%%%%%%%%%%%%%%%%
%%%%%%%%%%%%%%%%%%%%%%%%%%%%%%%%%%%%%%%%%%%%%%%%%%

\section{\bf Ancient solutions  of Ricci flow on circle bundles} \label{sec 3 S1 bundle}

In this section we consider the $r=1$ case of the backwards Ricci flow (\ref{eq RF torus r A1}) and
(\ref{eq RF torus r A2}) and study the existence and asymptotic geometric properties of the ancient
solutions.

 Using notations $h_{11}(\tau)\doteqdot a(\tau)$ and $q_{1i} \doteqdot q_i$,
the family of Riemannian metrics $g_{h, \vec{b}} (\tau)$ on the circle bundles $P_Q$ becomes
\begin{equation}
g_{a, \vec{b}} (\tau) = a(\tau) \, \sigma( \cdot) \otimes \sigma (\cdot)
+ \sum_{i=1}^m \, b_i(\tau) \, g_i. \label{eq family of Riem metric r=1}
\end{equation}
The backwards Ricci flow equation then simplifies to the system
\begin{subequations}
\begin{align}
& \frac{d a}{d\tau} =\sum_{i=1}^m n_i q_i^2 \frac{a^2(\tau)}{b_i^2(\tau)},  \label{eq RF torus r=1 A1}\\
& \frac{d b_i}{d\tau} = 2 p_i -q_i^2\frac{a(\tau)}{b_i(\tau)}, \,\,\, 1 \leq i
 \leq m. \label{eq RF torus r=1 A2}
\end{align}
\end{subequations}
Unless otherwise stated, in this section we shall assume the non-degeneracy condition that
$q_1, \cdots,q_m$ are nonzero and
the Fano condition that $p_1,\cdots, p_m$ are positive.

%%%%%%%%%%%%%%%%%%%%%%%%%%%%%%%%%%%%%%%%%%%%%%%%%%%%
%%%%%%%%%%%%%%%%%%%%%%%%%%%%%%%%%%%%%%%%%%%%%%%%%%%%
\subsection{\bf  A polynomial system, its linearization and monotonicity property}
\label{subsec 3.1 preparation}
To analyze system (\ref{eq RF torus r=1 A1}) and (\ref{eq RF torus r=1 A2}),
we introduce new dependent variables
\begin{equation}
 Y_i \doteqdot \frac{a}{b_i}, \,\,\,\, 1 \leq i \leq m,   \label{Y-defn}
\end{equation}
and let $Y = (Y_1, \cdots,Y_m)$ be the corresponding vector in $\R^m$.
We also introduce a new independent variable $u$ by the relation
\begin{equation} \label{u-defn}
     u= u(\tau)\doteqdot \int_{0}^\tau \frac{1}{a(\zeta)}\,\, d \zeta.
\end{equation}
Since we shall need $a(\tau)$ to be positive, $u$ increases with $\tau$.
We define the following useful functions
\begin{subequations}
\begin{eqnarray}
E(Y) &\doteqdot& \sum_{i=1}^m n_i q_i^2 Y_i^2,    \label{E-defn} \\
F_i(Y) &\doteqdot& 2p_iY_i -q_i^2Y_i^2 - E(Y), \quad 1 \leq i \leq m.  \label{Fi-defn}
\end{eqnarray}
\end{subequations}

One may now express the system (\ref{eq RF torus r=1 A1}) and (\ref{eq RF torus r=1 A2})
in terms of the new variables:
\begin{subequations}
\begin{align}
& \frac{d \ln a}{du} = E(Y),  \label{eq RF torus r=1 A1a}  \\
& \frac{d Y_i}{d u} = - Y_i F_i(Y), \quad 1 \leq i \leq m. \label{eq RF torus r=1 A2a}
\end{align}
\end{subequations}
Below we will abuse notations by writing $a(u) =a(\tau(u))$ and $Y_i(u)=Y_i(\tau(u))$.
Notice that the subsystem (\ref{eq RF torus r=1 A2a}) determines $Y$ as a function of $u$, and
(\ref{eq RF torus r=1 A1a}) allows us to recover $a$ as a function of $u$. More precisely, we have
\begin{equation} \label{formula-au}
   a(u) = a(0) \exp \left(  \int_0^u  E(Y(\zeta))\, d\zeta \right).
\end{equation}
Finally, $\tau$ can be recovered from integrating $d\tau = a(u) du$, and $b_i(\tau)$ from $a(u)/Y_i(u)$
and the relation between $u$ and $\tau$.
Hence given a solution $Y(u)$ of (\ref{eq RF torus r=1 A2a}) it gives rise to a solution $g_{a,\vec{b}}(\tau)$
of the backwards Ricci flow.
  Therefore for circle bundles it suffices for us to focus on
the system (\ref{eq RF torus r=1 A2a}).

\begin{rmk} \label{Einstein-point}
There is only one point $\xi=(\xi_1, \cdots, \xi_m) $ which satisfies
$F_{i} (\xi) = 0$ and $\xi_i > 0$ for all $i=1, \cdots, m$. This actually corresponds
to the Einstein metric of connection type and of positive scalar curvature found in \cite{WZ90},
which is unique up to homothety among metrics of connection type. To see this, note that $F_i (Y) = 0$
for all $i$ means that $Y_i(p_i - \frac{1}{2} q_i^2 Y_i) = \frac{1}{2} E(Y)$. We may set the right
hand side to be $\Lambda a$ where $\Lambda$ and $a$ are positive constants. With $b_i \doteqdot a/Y_i$,
upon comparison with (1.5) and (1.6) in \cite{WZ90}, we obtain the desired conclusion.
\end{rmk}

We define the nonempty compact convex subset
\begin{equation}
 \Omega_+ \doteqdot \{Y \in \mathbb{R}_{\geq 0}^m: \, F_i (Y) \geq 0 \text{ for each
} i=1,\cdots, m \},  \label{eq def Omega +}
\end{equation}
where the boundary $\partial \Omega_+$ is the union of portions of the
level hypersurfaces $F_{i} (Y) = 0$. We define also the subset
\begin{equation*}
\Omega_- \doteqdot \{Y \in \mathbb{R}_{\geq 0}^m: \, F_i (Y) \leq 0
\text{ for each } i=1,\cdots, m \}.
\end{equation*}
More generally, let $\Theta$ be the collection of nonempty proper subsets of $\{ 1,2, \cdots, m \}$.
For each $\theta \in \Theta$ we define the subsets
\begin{equation*}
\Omega_{\theta} \doteqdot \{Y \in \mathbb{R}_{\geq 0}^m: \,  F_i (Y) \geq 0 \text{ for each
} i \in \theta \text{ and } F_j (Y) \leq 0  \text{ for each
} j \notin \theta \}.
\end{equation*}
In short we denote $\Omega_{\{i \}}$ by $ \Omega_i$.

The next lemma describes the constant solutions of (\ref{eq RF torus r=1 A2a}).
Let $G(Y)$ denote the vector field $ (Y_1 F_1(Y), \cdots, Y_mF_m(Y))$,
 the negative of the vector field in (\ref{eq RF torus r=1 A2a}).

\begin{lemma}\label{lem Y i fixed point}
 The zeros of the vector field $G(Y)$ are

\smallskip
\noindent $($i$)$ the origin,

\noindent $($ii$)$ $\{ v_{\theta}, \theta \in \Theta \}$, where $v_\theta$ is the only solution which satisfies
the equations $F_i(Y)=0$  and $Y_i>0$ for $i \in \theta$, and $Y_j=0$ for $j \notin \theta$.
 Note that $v_\theta \in \Omega_\theta \setminus \Omega_+$ and
$v_{\theta} \in \Omega_-$.

\noindent $($iii$)$ the Einstein point $\xi $ described in Remark \ref{Einstein-point}.
Note that $\xi$ belongs to $\Omega_+$, $\Omega_-$, and each $\Omega_\theta$.
\end{lemma}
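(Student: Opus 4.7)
The plan is to classify zeros of $G$ according to the support $S(Y) \doteqdot \{i : Y_i > 0\}$ of a candidate nonnegative zero $Y$. Since $G_i(Y) = Y_i F_i(Y)$, the point $Y$ is a zero of $G$ if and only if $Y_j = 0$ for $j \notin S(Y)$ and $F_i(Y) = 0$ for every $i \in S(Y)$. The lemma then amounts to showing that each choice of support produces a unique zero, and that these zeros are exactly the ones listed.

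First I would dispose of the two extreme cases. If $S(Y) = \varnothing$, then $Y$ is the origin, giving (i). If $S(Y) = \{1,\dots,m\}$, the defining system is precisely $F_i(Y) = 0$ with $Y_i > 0$ for all $i$, whose unique solution is the Einstein point $\xi$ by Remark \ref{Einstein-point}; this gives (iii). The main content is the intermediate case $S(Y) = \theta$ with $\varnothing \neq \theta \subsetneq \{1,\dots,m\}$. Setting $Y_j = 0$ for $j \notin \theta$ reduces $E(Y)$ to $\sum_{i \in \theta} n_i q_i^2 Y_i^2$, and the equations $F_i(Y) = 0$ for $i \in \theta$ then coincide formally with the Einstein system of Remark \ref{Einstein-point} applied to the principal circle bundle over $\prod_{i \in \theta} M_i$ with Euler class determined by the restricted data $\{(p_i, q_i, n_i)\}_{i \in \theta}$. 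Since both the Fano condition $p_i > 0$ and the non-degeneracy condition $q_i \neq 0$ are inherited by any nonempty subset $\theta$, Remark \ref{Einstein-point} produces a unique positive solution $v_\theta$ on the $\theta$-coordinates, giving (ii).

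It then remains to verify the advertised set memberships. For $v_\theta$, one has $F_i(v_\theta) = 0$ for $i \in \theta$ by construction, while for $j \notin \theta$ the formula for $F_j$ collapses to $F_j(v_\theta) = -E(v_\theta) < 0$, the strict inequality holding because $v_\theta$ has positive $\theta$-coordinates and at least one $q_i$ with $i \in \theta$ is nonzero. This simultaneously shows $v_\theta \in \Omega_\theta$, $v_\theta \in \Omega_-$, and $v_\theta \notin \Omega_+$. For $\xi$, all $F_i(\xi) = 0$, so $\xi$ satisfies every defining inequality of $\Omega_+$, $\Omega_-$, and each $\Omega_\theta$ with equality.

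The only genuinely nontrivial step is the existence and uniqueness of $v_\theta$, and this is the step I expect to be the main obstacle; however, it is not really a new difficulty because it is subsumed in Remark \ref{Einstein-point} (equivalently, the uniqueness-up-to-homothety statement of \cite{WZ90}) once one observes that restricting to coordinates in $\theta$ yields an identical system for a lower-dimensional sub-bundle over $\prod_{i\in\theta} M_i$. Everything else is elementary sign-checking of $F_i$ at the candidate zero.
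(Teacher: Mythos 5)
Your proposal is correct and follows essentially the same approach as the paper's proof: classify zeros by the support of $Y$, reduce the intermediate case to the Einstein system of Remark \ref{Einstein-point} restricted to the sub-bundle over $\prod_{i\in\theta} M_i$, and verify $F_j(v_\theta) = -E(v_\theta) < 0$ for $j \notin \theta$. The only small thing the paper adds that you leave implicit is the observation that $F_i(Y) = 0$ with $Y_i \neq 0$ already forces $Y_i > 0$ (so there are no zeros with negative coordinates), which justifies restricting attention to nonnegative supports in the first place.
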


\begin{proof}
The zeros are given by $Y_1F_1(Y) =\cdots=Y_mF_m(Y) =0$. After a  permutation of the
indices, we may assume that $Y_1  \neq0, \cdots, Y_k \neq 0$, $Y_{k+1} = \cdots = Y_m = 0$,
and $F_{1}(Y)= \cdots= F_k(Y) = 0$, for some $k \in \{0, 1, \cdots, m \}$.

If $k=0$, we are in Case (i), if $k=m$ we are in Case (iii) and $\xi \in \partial \Omega_+$,
 otherwise we are in one of
the situations in Case (ii) where $\theta =\{1, \cdots, k \}$.
Note that $F_i(Y) \geq 0$ implies that $Y_i \geq 0$.

Now suppose $k \in \{ 1,\cdots, m-1 \}$ and $\theta = \{1, \cdots, k \}$  in Case (ii). Then
$F_j(Y) = - \sum_{l=1}^k n_l q_l^2Y_l^2 <0$ for any $j \geq k+1$.
So the solution $Y$ lies outside $\Omega_+$ and inside $\Omega_{\theta} \cap \Omega_-$.
To see the existence and the uniqueness of the solution, we notice that
the issue is to solve
\[
 2p_i Y_i -   q_i^2 Y_i^2  =  E((Y_{1}, \cdots, Y_k)), \, i=1, \cdots, k
\]
with all $Y_i >0$. This follows from the existence and
the uniqueness mentioned in Remark \ref{Einstein-point}.
\end{proof}

To facilitate the study of the global dynamical behavior of the nonlinear system
(\ref{eq RF torus r=1 A2a}),  we need to consider the linearization of vector field $G(Y)$
at each of the fixed points $0, v_\theta, \xi$ and a monotonicity formula for solution $Y(u)$.

\begin{lemma}\label{lem newly minted Feb 20}
Let matrix $\mathcal{L}_{Y}$ denote the linearization of $G(Y)$ at $Y$.
Then

\smallskip
\noindent  $($i$)$ $\mathcal{L}_{\xi}$ is diagonalizable with one negative and $m-1$ positive eigenvalues.
An eigenvector of the negative eigenvalue can be chosen to have all positive entries.

\noindent  $($ii$)$ For each nontrivial subset $\theta \subset \{ 1, \cdots, m\}$, $\mathcal{L}_{v_\theta}$
has $m- |\theta|+1$ negative eigenvalues and $|\theta|-1$  positive eigenvalues.

\noindent  $($iii$)$  $\mathcal{L}_{0}$ is zero matrix $0_{m \times m}$.
\end{lemma}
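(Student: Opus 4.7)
The plan is to write down the Jacobian of $G(Y)=(Y_iF_i(Y))_{i=1}^m$ in closed form and then handle the three fixed-point types in reverse order of difficulty. A direct differentiation gives
\[
(\mathcal{L}_Y)_{ij} \;=\; \delta_{ij}F_i(Y) \;+\; Y_i\bigl[\delta_{ij}(2p_i-2q_i^2Y_i) - 2n_jq_j^2Y_j\bigr],
\]
from which part (iii) is immediate since $F_i(0)=0$ and every $Y_i$-factor vanishes at the origin. For part (ii), note that at $v_\theta$ one has $(v_\theta)_j=0$ for $j\notin\theta$, $F_i(v_\theta)=0$ for $i\in\theta$, and $F_j(v_\theta)<0$ for $j\notin\theta$. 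Substituting these into the formula above shows $\mathcal{L}_{v_\theta}$ is block-diagonal with respect to the partition $\{1,\ldots,m\}=\theta\sqcup\theta^c$: the $\theta$-block coincides with the linearization at the unique positive zero (the ``Einstein point'') of the reduced polynomial system indexed by $\theta$, and the $\theta^c$-block is diagonal with entries $F_j(v_\theta)<0$. Part (i) applied to the reduced $|\theta|$-dimensional system then yields $1$ negative and $|\theta|-1$ positive eigenvalues from the first block, while the second block supplies the remaining $m-|\theta|$ negative eigenvalues, matching the claimed count.

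For part (i), the Einstein identity $2p_i\xi_i-q_i^2\xi_i^2=E(\xi)$ (from $F_i(\xi)=0$) allows me to rewrite
\[
\mathcal{L}_\xi \;=\; D' - \xi\, w^T, \qquad D'=\operatorname{diag}(d_i),\quad d_i=E(\xi)-q_i^2\xi_i^2,\quad w_j=2n_jq_j^2\xi_j.
\]
This is a diagonal matrix plus a rank-one perturbation, and non-degeneracy combined with $m\ge 2$ forces $d_i=(n_i-1)q_i^2\xi_i^2+\sum_{j\neq i}n_jq_j^2\xi_j^2>0$. Its non-trivial eigenvalues are the roots of the secular equation
\[
f(\lambda)\;:=\;\sum_{i=1}^{m}\frac{2n_iq_i^2\xi_i^2}{d_i-\lambda}\;-\;1\;=\;0,
\]
while any multiplicities among the $d_i$'s contribute trivial eigenvectors in $\ker w^T$ at the repeated value. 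The function $f$ is strictly increasing on each connected component of its domain, producing exactly one simple root in the leftmost interval $(-\infty,d_{(1)})$ and one in each interior interval $(d_{(k)},d_{(k+1)})$. The decisive sign check is
\[
f(0)\;=\;\sum_i\frac{2n_iq_i^2\xi_i^2}{E(\xi)-q_i^2\xi_i^2}-1 \;>\; \frac{2}{E(\xi)}\sum_i n_iq_i^2\xi_i^2 - 1 \;=\; 1,
\]
which forces the unique root in $(-\infty,d_{(1)})$ to be strictly negative and every remaining root (together with each trivial eigenvalue $d_i>0$) to be strictly positive. For the negative root $\lambda^*$ the formula $y_i=\xi_i/(d_i-\lambda^*)$ supplies an eigenvector with every component positive, and combining the secular eigenvectors with the $\ker w^T$-subspaces at coincident $d_i$'s produces a full basis of $\mathbb{R}^m$, establishing diagonalizability.

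The main obstacle is the spectral analysis in part (i): one must handle possible coincidences among the $d_i$'s while retaining diagonalizability, and correctly sign $f(0)$. The last step uses in an essential way the Einstein identity $\sum_i n_iq_i^2\xi_i^2=E(\xi)$ paired with the strict inequality $E(\xi)-q_i^2\xi_i^2<E(\xi)$, which is precisely where the non-trivial assumption $m\ge 2$ and the non-degeneracy of $Q$ enter. Once part (i) is in hand, parts (ii) and (iii) follow by the structural reductions above.
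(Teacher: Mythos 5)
Your treatment of parts (ii) and (iii) mirrors the paper's exactly (block-diagonalization at $v_\theta$, reduction of the $\theta$-block to the Einstein linearization for the truncated system). For part (i), however, you take a genuinely different and somewhat more self-contained route. The paper writes $\mathcal{L}_\xi = E(\xi)I - \beta$, conjugates $\beta$ by the diagonal matrix $D_\xi$ to obtain a matrix $\alpha$ with \emph{positive} entries of the form "diagonal plus rank one,'' and then invokes a separate appendix lemma (Lemma \ref{lem eigenvalue ai matrix and diag}) together with Perron--Frobenius to locate the spectrum and to get the positive eigenvector. You instead write $\mathcal{L}_\xi = D' - \xi w^T$ directly, identify the nontrivial eigenvalues as roots of the secular function $f(\lambda)=\sum_i 2n_iq_i^2\xi_i^2/(d_i-\lambda) - 1$, and settle the sign split by the single observation $f(0)>1$, which follows cleanly from $d_i<E(\xi)$. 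The explicit formula $y_i=\xi_i/(d_i-\lambda^*)$ then gives the positive eigenvector without Perron--Frobenius. What this buys you: no appendix lemma, a shorter sign argument, and an explicit eigenvector; what the paper's version buys: the quantitative two-sided bounds on the other eigenvalues (needed nowhere in this lemma but recorded in the appendix) and a uniform template reused for the matrix $\alpha$. Both ultimately rest on the same rank-one-perturbation interlacing picture. Your decomposition $\mathcal{L}_\xi = D' - \xi w^T$ and the identity $d_i = \xi_i(2p_i-2q_i^2\xi_i)$ check out, as does the inequality $f(0)>1$.

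One small point worth flagging: in part (ii) you invoke "part (i) applied to the reduced $|\theta|$-dimensional system,'' but your secular-equation argument for (i) explicitly uses $m\ge 2$ to conclude $d_i>0$. When $|\theta|=1$ the reduced problem has $m=1$, where $d_1=(n_1-1)q_1^2\xi_1^2$ can vanish (e.g., $n_1=1$), so $f(0)$ is not defined and the secular argument does not apply. The $1\times 1$ block should instead be handled directly: the Einstein identity $2p_1=(n_1+1)q_1^2\xi^\theta_1$ makes the block equal to $-2p_1\xi^\theta_1<0$, giving the single negative eigenvalue as required. This is a trivial patch, but as written the proof of (ii) has a hidden circularity for singleton $\theta$.
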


\begin{proof}
$($i$)$ Note that  $\mathcal{L}_{\xi}$ is the matrix
\begin{equation*}
 \left [ \begin{array}{cccc}
2\xi_1(p_1 -(n_1+1)q_1^2\xi_1) & -2n_2q_2^2\xi_1 \xi_2    & \cdots  & -2 n_m q_m^2\xi_1 \xi_m  \\
-2 n_1q_1^2\xi_2 \xi_1  & 2\xi_2(p_2 -(n_2+1)q_2^2\xi_2)  & \cdots  & -2 n_mq_m^2 \xi_2 \xi_m  \\
\vdots & \vdots & \vdots  & \vdots \\
-2 n_1q_1^2 \xi_m \xi_1 & -2n_2q_2^2\xi_m \xi_2   & \cdots & 2\xi_m(p_m -(n_m+1)q_m^2\xi_m )
 \end{array} \right ].
\end{equation*}
  Define the matrix
\begin{equation*}
 \beta \doteqdot \left [ \begin{array}{cccc}
 (2n_1+1)q_1^2\xi_1^2 & 2n_2q_2^2\xi_1 \xi_2    & \cdots  & 2 n_m q_m^2\xi_1 \xi_m  \\
2n_1q_1^2 \xi_2 \xi_1  & (2n_2+1)q_2^2\xi_2^2  & \cdots  & 2n_mq_m^2 \xi_2 \xi_m    \\
\vdots & \vdots & \vdots  & \vdots \\
2n_1q_1^2\xi_m \xi_1   & 2n_2q_2^2\xi_m \xi_2   & \cdots & (2n_m+1)q_m^2\xi_m^2
 \end{array} \right ].
\end{equation*}
Then we have $\mathcal{L}_{\xi} = E(\xi)I_{m \times m} -\beta$.
Define the diagonal matrix $D_{\xi} \doteqdot [ \xi_1, \cdots,\xi_m]$ and the
 matrix of positive entries $\alpha \doteqdot D_{\xi}^{-1} \beta D_{\xi}$.
We have
\begin{equation*}
 \alpha \doteqdot \left [ \begin{array}{cccc}
 (2n_1+1)q_1^2\xi_1^2 & 2 n_2q_2^2 \xi_2^2   & \cdots  & 2 n_m q_m^2 \xi_m^2  \\
2 n_1q_1^2 \xi_1^2 & (2n_2+1)q_2^2\xi_2^2  & \cdots  & 2  n_mq_m^2 \xi_m^2  \\
\vdots & \vdots & \vdots  & \vdots \\
2  n_1q_1^2 \xi_1^2 & 2 n_2q_2^2 \xi_2^2 & \cdots & (2n_m+1)q_m^2\xi_m^2
 \end{array} \right ].
\end{equation*}
Note that the matrix $\alpha$ and $\beta$ have the same eigenvalues.
 We denote by $\lambda_1(\alpha)$ the largest eigenvalue of $\alpha$.
This is known in the literature as the  \textbf{Perron-Frobenius eigenvalue}.

Let $a_i =  2n_iq_i^2\xi_i^2$ and $\epsilon_i =\frac{1}{2n_i}$.
By Lemma \ref{lem eigenvalue ai matrix and diag}(ii)  in the appendix, the
matrix $\alpha$ is diagonalizable with eigenvalues  $\lambda_i(\alpha), \, i=1, \cdots, m$.
 Since the eigenvalues of a matrix depend continuously on the matrix,
we may still apply Lemma \ref{lem eigenvalue ai matrix and diag}(i) to estimate $\lambda_i(\alpha)$
even when not all $\epsilon_i a_i$ are distinct, except that we need to change $<$'s to $\leq$'s in
(\ref{eq lambda two est getby}).

 Note that each of the row sums in $\alpha$ is greater than $2 E(\xi)$.
Hence the smallest  eigenvalue of $\mathcal{L}_{\xi}$ satisfies
$E(\xi) - \lambda_1(\beta) < - E(\xi)<0$.
The corresponding eigenvector of $\mathcal{L}_{\xi}$ is the eigenvector
of $\beta$ corresponding to $\lambda_1(\beta)=\lambda_1(\alpha)$,
which has positive entries by Lemma \ref{lem eigenvalue ai matrix and diag}(i)
and the fact that the entries of $D_{\xi}$ are positive.
The other eigenvalues of $\alpha$ satisfy
\[
 \min_{i=1, \cdots,m} \{q_i^2 \xi_i^2 \} =
 \min_{i=1, \cdots,m} \{\epsilon_i a_i \} \leq  \lambda_j(\alpha) \leq \max_{i=1, \cdots,m}
\{\epsilon_i a_i \} = \max_{i=1, \cdots,m} \{q_i^2 \xi_i^2 \}
\]
where  $j=2, \cdots, m$.
Hence the corresponding eigenvalues of $\mathcal{L}_{\xi}$ satisfy
\[
\lambda_j(\mathcal{L}_{\xi}) = E(\xi)- \lambda_j(\beta) \geq  E(\xi)-  \max_{i=1, \cdots,m}
\{q_i^2 \xi_i^2 \} >0.
\]

$($ii$)$ Without loss of generality we may assume $\theta =\{1, \cdots, k \}$ with $1 \leq k \leq m-1$.
Then $\mathcal{L}_{v_\theta}$ is given by
\begin{eqnarray*}
 \left [ \begin{array}{cccccc}
2Y_1(p_1 -(n_1+1)q_1^2Y_1) & \cdots & -2n_kq_k^2 Y_1 Y_k &0    & \cdots  & 0  \\
\vdots & \vdots & \vdots & \vdots & \vdots & \vdots   \\
-2 n_1q_1^2Y_k Y_1  & \cdots & 2Y_k(p_k -(n_k+1)q_k^2Y_k) & 0  & \cdots  & 0  \\
0 & \cdots & 0  & F_{k+1}(Y)  & \cdots & 0  \\
\vdots & \vdots & \vdots  & \vdots  & \vdots & \vdots  \\
0 & \cdots & 0   & 0 & \cdots & F_m(Y)
 \end{array} \right ]
\end{eqnarray*}
where $Y=v_\theta$.
Note that this is a block diagonalized matrix since the submatrix in the upper-left corner
is the linearization of the truncated system $(Y_1F_1(Y), \cdots, Y_kF_k(Y) )$ at the
corresponding Einstein point. By part (i) the submatrix has one negative eigenvalue and $k-1$
positive eigenvalues. Since $F_{j}(v_{\theta}) <0$ for $j =k+1, \cdots, m$,
 (ii) is now proved.

 $($iii$)$  This is obvious.
\end{proof}

Now we discuss a monotonicity formula for solution $Y(u)$ of (\ref{eq RF torus r=1 A2a}).
Let $\mathcal{W}(g,f,\tau)$ be Perelman's entropy functional.
Let  $\mu(g, \tau) \doteqdot \inf _{f} \mathcal{W}(g,f,\tau)$ and $\nu(g) \doteqdot \inf_{\tau >0}
\mu(g, \tau)$. It is well-known that $\nu(g)$ is a monotone quantity under the Ricci flow.
For the metrics $g_{a, \vec{b}}$ in (\ref{eq family of Riem metric r=1}) we define
\begin{equation}
\bar{\lambda}(g_{a, \vec{b}}) \doteqdot \bar{\lambda} (Y) \doteqdot
\left ( \prod_{i=1}^mY_i^{- \frac{2n_i}{n}} \right )
\cdot \sum_{i=1}^m  (2n_i p_iY_i  -\frac{1}{2} n_i q_i^2 Y_i^2 )  \label{eq bar lambda def}
\end{equation}
where $n =1+ \sum_{i=1}^m 2n_i$ and once again $Y_i =\frac{a}{b_i} >0$.
Note that in our setting $\bar{\lambda} (Y) $ is a smooth function of $Y \in \mathbb{R}^m_{> 0}$.
If we assume that the minimizing function $f$ of  $ \inf_{f} \mathcal{W}(g_{a, \vec{b}},f,\tau) $ is a constant,
then a simple calculation shows that $\nu(g_{a, \vec{b}})$ is a linear function
of $\ln (\bar{\lambda}(g_{a, \vec{b}}))$
which would imply the monotonicity of $\bar{\lambda}(g_{a, \vec{b}})$ under the Ricci flow.
Actually we can show this monotonicity directly.

\begin{lemma} \label{lem bar lambda monotone}
Under $\operatorname{ODE}$  $($\ref{eq RF torus r=1 A2a}$)$ we have
\begin{align}
& \left ( \prod_{i=1}^mY_i^{- \frac{2n_i}{n}} \right )^{-1} \frac{d \bar{\lambda}(Y(u))
}{du}  \notag  \\
\leq & - \left ( \frac{1}{\sqrt{n}} \left ( \sum_{i=1}^m n_iY_i^2 (2 p_i
 - q_i^2Y_i)^2 \right  )^{1/2} -\sqrt{\frac{n-1}{2n}} E(Y) \right )^2 \leq 0.  \label{eq deriv bar lambda}
\end{align}
Furthermore $\frac{d \bar{\lambda}(Y(u)) }{du} =0$ for some $u=u_0$
if and only if $F_i(Y(u_0)) =0$ for each $i$, i.e., $Y(u) = \xi$ for all $u$.
\end{lemma}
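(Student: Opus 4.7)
The plan is to compute $\frac{d\bar\lambda}{du}$ explicitly by factoring $\bar\lambda = P \cdot S$ with $P(Y) = \prod_i Y_i^{-2n_i/n}$ and $S(Y) = \sum_i(2n_ip_iY_i - \tfrac{1}{2}n_iq_i^2Y_i^2)$, and then bound the result with a single Cauchy--Schwarz estimate. The algebraic bookkeeping is organized around three identities that fall out of the definitions of $F_i$, $E$, and the relation $\sum_i n_i = \tfrac{n-1}{2}$: namely $Y_i(2p_i - q_i^2Y_i) = F_i + E$, $\sum_i n_i F_i = S - \tfrac{n}{2}E$, and, setting $T^2 := \sum_i n_iY_i^2(2p_i - q_i^2Y_i)^2 = \sum_i n_i(F_i+E)^2$, the derived relation $\sum_i n_iF_i^2 = T^2 - 2ES + \tfrac{n+1}{2}E^2$.

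Using (\ref{eq RF torus r=1 A2a}) I would obtain $\frac{d\ln P}{du} = \tfrac{2}{n}\sum_i n_iF_i = \tfrac{2S}{n} - E$ and $\frac{dS}{du} = -\sum_i n_i(F_i+E)F_i = -\sum_i n_iF_i^2 - E\bigl(S - \tfrac{n}{2}E\bigr)$. Substituting both into $\frac{d\bar\lambda}{du} = P\bigl(\frac{dS}{du} + S\,\frac{d\ln P}{du}\bigr)$ and eliminating $\sum_i n_iF_i^2$ via the third identity, the mixed $ES$ terms should cancel cleanly and leave
\[
P^{-1}\,\frac{d\bar\lambda}{du} = -T^2 - \frac{E^2}{2} + \frac{2S^2}{n}.
\]

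The key estimate is Cauchy--Schwarz applied to $(\sqrt{n_i})$ and $(\sqrt{n_i}\,B_i)$ with $B_i := F_i + E = Y_i(2p_i - q_i^2Y_i)$: since $\sum_i n_iB_i = S - E/2$ and $\sum_i n_iB_i^2 = T^2$, this gives $(S - E/2)^2 \leq \tfrac{n-1}{2}\,T^2$. Combining with the triangle inequality $\sqrt{2}\,|S| \leq |\sqrt{2}S - E/\sqrt{2}| + E/\sqrt{2}$ and squaring produces $2S^2 \leq (\sqrt{n-1}\,T + E/\sqrt{2})^2$. Substituting this upper bound for $2S^2/n$ in the displayed formula and regrouping the resulting quadratic in $T$ and $E$ as a perfect square yields precisely (\ref{eq deriv bar lambda}).

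For the equality assertion, $\frac{d\bar\lambda}{du} = 0$ at some $u_0$ forces three conditions simultaneously: (i) Cauchy--Schwarz equality, so $B_i \equiv B$ is independent of $i$; (ii) triangle tightness, which (since $E > 0$ on $\mathbb{R}_{>0}^m$) forces $B \geq 0$; and (iii) vanishing of the outer perfect square $\bigl(T/\sqrt{n} - \sqrt{(n-1)/(2n)}\,E\bigr)^2$, i.e.\ $T = \sqrt{(n-1)/2}\,E$. Conditions (i) and (iii) give $\tfrac{n-1}{2}B^2 = T^2 = \tfrac{n-1}{2}E^2$, and (ii) selects $B = E$, so $F_i = B - E = 0$ for every $i$; this is the Einstein point $\xi$ of Remark \ref{Einstein-point}, and since $\xi$ is a fixed point of (\ref{eq RF torus r=1 A2a}) we get $Y(u) \equiv \xi$. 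The only real obstacle is computational: spotting $\sum_i n_iF_i = S - \tfrac{n}{2}E$ and organizing the expansion of $\sum_i n_iF_i^2$ so that all cross terms cancel; after that, a single Cauchy--Schwarz does the work.
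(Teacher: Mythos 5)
Your proposal is correct and follows essentially the same route as the paper: both reduce $P^{-1}\,d\bar\lambda/du$ to the identical expression $\tfrac{2}{n}S^2 - \tfrac{1}{2}E^2 - T^2$ and then apply the same Cauchy--Schwarz estimate to $\sum_i n_i B_i = S - E/2$, with the triangle inequality supplying the extra $E/2$. One small merit of your write-up: in the equality analysis you explicitly derive $B \geq 0$ from triangle-inequality tightness (since $E>0$ on $\mathbb{R}^m_{>0}$), which makes the paper's unexplained assertion that the common value $c$ is positive fully rigorous.
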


\begin{proof}
We compute
\begin{align*}
& \left ( \prod_{i=1}^mY_i^{- \frac{2n_i}{n}} \right )^{-1} \frac{d \bar{\lambda}(Y(u)) }{du} \\
=& \left (\frac{4}{n} \sum_{i=1}^m  n_ip_iY_i  - \frac{n+1}{n} E(Y) \right )
\sum_{i=1}^m  \,(2n_i p_iY_i  -\frac{1}{2} n_i q_i^2 Y_i^2 )  \\
& + \sum_{i=1}^m  \,(- 2n_i p_iY_iF_i(Y)  + n_i q_i^2 Y_i^2 F_i(Y) )  \\
=& \frac{8}{n} \left ( \sum_{i=1}^m  n_ip_iY_i  \right)^2- \frac{2}{n} E(Y) \sum_{i=1}^m  n_ip_iY_i
 - \frac{n+1}{n}E(Y) \sum_{i=1}^m  (2n_i p_iY_i  -\frac{1}{2} n_i q_i^2 Y_i^2 )  \\
 & -\sum_{i=1}^m  (2n_i p_iY_i  - n_i q_i^2 Y_i^2 ) (2p_i Y_i- q_i^2Y_i^2 -E(Y))  \\
 =& \frac{8}{n} \left ( \sum_{i=1}^m  n_ip_iY_i  \right)^2 - \frac{4}{n} E(Y) \sum_{i=1}^m  n_ip_iY_i
 -\frac{n-1}{2n}E(Y)^2 - \sum_{i=1}^m  n_i (2p_iY_i -q_i^2Y_i^2 )^2 .
 \end{align*}
 We have proved
 \begin{align}
& \left ( \prod_{i=1}^mY_i^{- \frac{2n_i}{n}} \right )^{-1} \frac{d \bar{\lambda}(Y(u)) }{du}
 \notag \\
 =& \frac{2}{n}  \left (2 \sum_{i=1}^m   n_i p_i Y_i   - \frac{1}{2} E(Y)\right)^2
 -\frac{1}{2}E(Y)^2 - \sum_{i=1}^m  n_iY_i^2 (2p_i -q_i^2Y_i )^2 . \label{eq d du bar lambda}
\end{align}

Note that by the Cauchy-Schwartz inequality we have
\begin{align*}
& \left| 2 \sum_{i=1}^m  n_i p_i Y_i   - \frac{1}{2} E(Y)\right| \leq \left|\sum_{i=1}^m  n_i Y_i (2 p_i
 - q_i^2Y_i)\right| + \frac{1}{2} E(Y) \\
\leq &  \left ( \sum_{i=1}^m  n_i  \right )^{1/2} \left (\sum_{i=1}^m n_iY_i^2 (2 p_i
 - q_i^2Y_i)^2 \right  )^{1/2} + \frac{1}{2} E(Y).
\end{align*}
Hence
\begin{align*}
  \left ( 2 \sum_{i=1}^m  n_i p_i Y_i   - \frac{1}{2} E(Y)\right)^2
 \leq & \,\, \left(\frac{n-1}{2} \right) \sum_{i=1}^m n_iY_i^2 (2 p_i  - q_i^2Y_i)^2 +\frac{1}{4}E(Y)^2 \\
& +\sqrt{\frac{n-1}{2}} \, E(Y)  \left (\sum_{i=1}^m n_iY_i^2 (2 p_i
 - q_i^2Y_i)^2 \right  )^{1/2}.
\end{align*}
Combining this inequality with (\ref{eq d du bar lambda}) we get (\ref{eq deriv bar lambda}).

To have  $\frac{d \bar{\lambda}(Y(u)) }{du} =0$ at some $u_0$, we need
\begin{equation}
\sum_{i=1}^m n_iY_i^2(u_0)(2p_i -q_i^2 Y_i(u_0))^2 = \frac{n-1}{2} E(Y(u_0))^2
 \label{eq tem mar 19A}
\end{equation}
and for some constant $c >0$
\begin{align}
& (\sqrt{n_1}\,Y_1(u_0)(2p_1-q_1^2Y_1 (u_0)), \cdots, \sqrt{n_m}\,
 Y_m (u_0)(2p_m-q_m^2Y_m(u_0)))  \notag \\
= & c(\sqrt{n_1}, \cdots, \sqrt{n_m}), \label{eq tem mar 19B}
\end{align}
the latter condition owing to the use of the Cauchy-Schwartz inequality.
Equation (\ref{eq tem mar 19B}) gives $2p_iY_i(u_0)-q_i^2Y_i^2(u_0) =c$.
Substituting this into (\ref{eq tem mar 19A}) we get $c^2 =E(Y(u_0))^2$. Hence we have proved
$F_i(Y(u_0)) =2p_iY_i(u_0)-q_i^2Y_i(u_0)^2 -E(Y(u_0)) =c -c=0$.
\end{proof}

%%%%%%%%%%%%%%%%%%%%%%%%%%%%%%%%%%%%%%%%%%%%%%%%%%%%
%%%%%%%%%%%%%%%%%%%%%%%%%%%%%%%%%%%%%%%%%%%%%%%%%%%%

\subsection{\bf Existence of ancient solutions on circle bundles}
\label{subsec exist long S1 m Omega +}

In this subsection we show the existence of the ancient
solutions of the Ricci flow on circle bundles and  derive the asymptotic properties of the
corresponding functions $a$ and $b_i$.
We begin with the global dynamical behavior of the system (\ref{eq RF torus r=1 A2a}).

\begin{lemma} \label{lem non ancient or ancient sol}
 $($i$)$ If $Y(0) \in \mathbb{R}^m_{\geq 0}$, then the solution $Y(u) \in \mathbb{R}^m_{\geq 0}$
  for all $u \in [0, u_*)$ where $u_*$ is the maximal time of existence, and
 if $Y(u) \in \mathbb{R}^m_{> 0}$ holds for some $u=u_0$ then it holds for all $u$.

\noindent $($ii$)$ If for some $Y(0) \in \mathbb{R}^m_{> 0}$
there are $i$ and $u_1 \in [0,u_*)$ such that $Y_i(u_1) > \frac{2p_i}{(n_i+1)q_i^2}$,
then $u_* < \infty$ and $Y(u)$ approaches  infinity  as $u \rightarrow u_*^-$.
The corresponding solution of the Ricci flow is not ancient.

\noindent $($iii$)$ If $Y(u) \in \mathbb{R}^m_{\geq 0}$ is a solution satisfying
 $Y_i(u) \leq \frac{2p_i}{(n_i+1)q_i^2}$ for all $u \geq 0$,
 then $Y(u)$ is defined on $[0, \infty)$ .
 Furthermore either $\lim_{u \rightarrow \infty} Y(u)$ equals to $\xi$
 and the corresponding solution of the Ricci flow is ancient,
 or $Y(u)$ approaches some coordinate hyperplane $Y_i=0$ as $u \rightarrow \infty$.
\end{lemma}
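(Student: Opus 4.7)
For part (i), I would observe that each coordinate hyperplane $\{Y_i = 0\}$ is invariant under (\ref{eq RF torus r=1 A2a}), since the $i$-th component $-Y_i F_i(Y)$ of the vector field vanishes identically there. Uniqueness of ODE solutions then prevents any trajectory starting in $\R^m_{\geq 0}$ from crossing out of it, and guarantees that once a coordinate is strictly positive it remains so throughout the maximal interval of existence.

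For part (ii), the key algebraic observation is $E(Y) \geq n_i q_i^2 Y_i^2$, which yields
\[
F_i(Y) \;\leq\; Y_i\bigl(2p_i - (n_i+1)q_i^2 Y_i\bigr).
\]
Set $c_0 := (n_i+1)q_i^2 Y_i(u_1) - 2p_i > 0$; the hypothesis gives $F_i(Y(u_1)) < 0$, so $Y_i$ is strictly increasing at $u_1$. A standard continuity argument shows $Y_i(u) \geq Y_i(u_1)$ and hence $F_i(Y) \leq -c_0 Y_i$ throughout $[u_1, u_*)$, giving the Riccati-type inequality
\[
\frac{dY_i}{du} \;=\; -Y_i F_i(Y) \;\geq\; c_0 Y_i^2,
\]
which forces $Y_i \to \infty$ at some finite $u_* \leq u_1 + \frac{1}{c_0 Y_i(u_1)}$.

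To pass from finite-$u$ blow-up to non-ancientness in $\tau$, I plan to argue by contradiction. Assume the maximal $\tau$-existence time $T_*$ is $+\infty$, and set $U_\infty := \lim_{\tau \to \infty} u(\tau) \leq u_*$. If $U_\infty < u_*$, then $Y(\tau)$ remains in a compact subset of $\R^m_{>0}$, so $E(Y)$ is bounded and $\frac{da}{d\tau} = E(Y)$ forces $a(\tau) \leq a(0) + C\tau$; then $u(\tau) = \int_0^\tau a^{-1}\,ds \to \infty$, contradicting $u(\tau) \leq U_\infty$. Hence $U_\infty = u_*$, and since $Y_i(u) \to \infty$ as $u \to u_*^-$, we may pick $u_2 \in (u_1,u_*)$ with $Y_i(u_2) > 2p_i/q_i^2$. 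At the corresponding finite time $\tau_2 := \tau(u_2)$, monotonicity of $Y_i$ in $u$ (and therefore in $\tau$) gives $Y_i(\tau) \geq Y_i(u_2)$ for $\tau \geq \tau_2$, so $\frac{db_i}{d\tau} \leq 2p_i - q_i^2 Y_i(u_2) < 0$. Integration forces $b_i$ to reach zero at some finite $\tau$, contradicting $b_i > 0$, so $T_* < \infty$ and the flow is not ancient.

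For part (iii), the uniform upper bound $Y_i \leq 2p_i/((n_i+1)q_i^2)$ confines the trajectory to a fixed compact subset of $\R^m_{\geq 0}$ on which the vector field is smooth, so standard ODE theory gives $u_* = \infty$. The asymptotic dichotomy then comes from the monotonicity Lemma \ref{lem bar lambda monotone}: $\bar\lambda(Y(u))$ is non-increasing in $u$, strictly so unless $Y \equiv \xi$. If the trajectory stays uniformly bounded away from $\partial\R^m_{\geq 0}$, its $\omega$-limit set is a non-empty compact connected invariant subset of the open orthant on which $\bar\lambda$ is constant; LaSalle's invariance principle together with the equality case of Lemma \ref{lem bar lambda monotone} identifies it as $\{\xi\}$, so $Y(u) \to \xi$. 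Then $E(Y) \to E(\xi) > 0$, so by (\ref{formula-au}) $a(u)$ grows exponentially and hence $\tau(u) \to \infty$, making the corresponding Ricci flow ancient. Otherwise, by definition, the trajectory accumulates on $\partial\R^m_{\geq 0}$, i.e., some $Y_i(u)$ has $0$ as an accumulation value, which is the second alternative. The main anticipated obstacle is the transfer step in part (ii): since $a$ itself diverges as $u \to u_*^-$, a direct estimate of $\tau(u_*) = \int_0^{u_*} a\,du$ is delicate, and the contradiction argument sketched above — ruling out $U_\infty < u_*$ via the linear growth of $a$ under bounded $Y$, then exploiting monotonicity past the threshold $2p_i/q_i^2$ — is the cleanest route I see.
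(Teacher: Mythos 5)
Your proof is correct and follows essentially the same strategy as the paper's: invariance of the coordinate hyperplanes together with ODE uniqueness for (i), a Riccati-type comparison giving finite-$u$ blow-up of $Y_i$ followed by the $b_i$-equation to rule out ancientness for (ii), and the $\bar{\lambda}$-monotonicity of Lemma \ref{lem bar lambda monotone} combined with an $\omega$-limit set (LaSalle) argument for (iii). The only stylistic difference is that in (ii) the detour through $U_\infty$ and proof by contradiction is unnecessary: for any $u_2 < u_*$ the quantity $\tau(u_2) = \int_0^{u_2} a(\zeta)\,d\zeta$ is automatically finite because $a$ is continuous on the compact interval $[0,u_2]$, and from that point on the uniform bound $\frac{db_i}{d\tau} \leq 2p_i - q_i^2 Y_i(u_2) < 0$ (using monotonicity of $Y_i$ past the threshold) directly forces $b_i$ to reach zero at finite $\tau$, giving non-ancientness without assuming $T_*=\infty$.
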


\begin{proof}
 $($i$)$ Note that if $Y_i(u_1) =0$ for some $i$ and $u_1$, then $Y_i(u) =0$ for all $u$.
Now (i) follows from the uniqueness of solutions of the $\operatorname{ODE}$ (\ref{eq RF torus r=1 A2a}).

 $($ii$)$ By (\ref{eq RF torus r=1 A2a}) we have
\begin{equation}
\frac{dY_i}{du} \geq Y^2_i( (n_i+1)\,q_i^2\,Y_i -2p_i). \label{eq ode compa u * finite}
\end{equation}
Let $a$ and $b$ be two positive constants.
The $\operatorname{ODE}$
\[
\frac{dz}{du} =z^2(az-b), \quad \text{with initial condition} \,\,z(u_1) >\frac{b}{a}
\]
has solution
\[
u=\frac{1}{bz} +\frac{a}{b^2} \ln \left (1-\frac{b}{az} \right ) +c
< \frac{1}{bz}  + \frac{a}{b^2} \left (-\frac{b}{az}  -  \frac{b^2}{2a^2z^2} \right ) +c
=- \frac{1}{2az^2}+c
\]
where the constant $c \in (u_1, \infty)$ is determined by the initial condition.
With $a =  (n_i+1)\, q_i^2$, $b=2p_i$, and $z(u_1) =Y_i(u_1)$,  upon comparing
(\ref{eq ode compa u * finite}) with the equation for $z$, we obtain

\begin{equation}
Y_i(u)^2 \geq z(u)^2 > \frac{1}{2a(c-u)},  \quad u \in [u_1,u_*). \label{eq Y i sol and Z sol compar}
\end{equation}
This estimate  implies  $u_* < \infty$.
By the standard extendibility theory we conclude that  $Y(u)$ approaches  $\infty$ when
$u \rightarrow u_*^-$.

From (\ref{eq RF torus r=1 A2}) we conclude that for some $i$ the positive function
$b_i(\tau)$ approaches  zero in finite $\tau$-time.
Hence the corresponding solution $g_{a,\vec{b}}(\tau)$ is not ancient.

 $($iii$)$ Since $Y(u)$ stays in the bounded set $\{ Y, \,  0 \leq Y_i \leq  \frac{2p_i}{(n_i+1)q_i^2},
 i=1, \cdots, m \}$ for all $u \in [0, u_{*})$,
 by the extendibility theory of $\rm{ODE}s$ we conclude that $u_{*} = \infty.$

We regard $Y(u)$ as a flow line of the vector field $-G(Y)$ from (\ref{eq RF torus r=1 A2a})
and consider $\omega_Y$,  its $\omega$-limit set.
Since $Y(u)$ stays in a compact set, $\omega_Y$ must be non-empty,
compact, connected, and flow-invariant (see \cite[Proposition 1.4]{PaM82} or
Theorems VII.1.1 and VII.1.2 in \cite{Hr64}).
To prove the second part of $($iii$)$ it suffices to show that if $\omega_Y$ contains a point
 in $\mathbb{R}_{>0}^m$, say  $\zeta$, then $\omega_Y=\{ \zeta \}= \{ \xi \}$.
To see this, suppose $Y(u_i) \rightarrow \zeta$ for some sequence $u_i \rightarrow \infty$,
then the monotone quantity $\bar{\lambda}(Y(u_i)) \rightarrow \bar{\lambda}(\zeta)$.
Let $Y_*(u)$ be the solution of  (\ref{eq RF torus r=1 A2})
with initial condition $Y_*(0) =\zeta$.
Then  $Y_*(u)$ is contained in $\omega_Y$ for each $u$ and  $Y_*(u)$ is the limit of $Y(u_i+ u)$.
Since by Lemma \ref{lem bar lambda monotone} function $\bar{\lambda}(Y(u))$ is monotone non-increasing,
we conclude that $\bar{\lambda}(Y_*(u))$ is a constant function.
By the equality statement in Lemma \ref{lem bar lambda monotone} we get $Y_*(u) =\xi$.
Since $\omega_Y$ is connected,  we get $\omega_Y= \{ \xi \}$.

When $\omega_Y= \{ \xi \}$, to see the corresponding solution of the Ricci flow is ancient,
 we need to prove that $\tau(u) \rightarrow \infty$ as $u \rightarrow \infty$.
By (\ref{eq RF torus r=1 A1}) $a(\tau)$ is an increasing function of $\tau$,
and it follows from the definition of $u(\tau)$ in (\ref{u-defn})  that
the inverse function $\tau(u)$ is also increasing.
 Hence fixing some $u_0>0$ and letting $\tau_0 \doteqdot \tau(u_0)$, we have
$a(\tau(u)) \geq a(\tau_0)$ for $u \in [u_0, \infty)$.
It follows from $d\tau =a(\tau(u))\, du$ that
\begin{equation}
\tau (u) -\tau_0 \geq a(\tau_0) (u-u_0), \quad u \in [u_0, \infty). \label{eq tau to infty if u}
\end{equation}
Hence $\lim_{u \rightarrow \infty} \tau(u)  =\infty$ and the functions $a(\tau)$ and $b_i(\tau)$
exist for all $\tau \in [0,\infty)$.
\end{proof}

\vskip .1cm

Now we give some sufficient conditions for the existence of ancient solutions which converge to $0$
or to $\xi$.

\begin{theorem}\label{thm flow near origin}
Let $Y(u) \in \mathbb{R}^m_{\geq 0}$ be a solution of $($\ref{eq RF torus r=1 A2a}$)$.

\smallskip
\noindent $($i$)$ There is a constant $c_0>0$ such that if $\sum_{i=1}^m Y_i(0) \leq c_0$,
then the solution $Y(u)$ satisfies $\lim_{u \rightarrow \infty} Y(u) =0$.
If $Y(u)$ further satisfies $Y(u) \in \mathbb{R}_{>0}^m$, then the corresponding  $g_{a,\vec{b}}(\tau)$
is an ancient solution of the Ricci flow on circle bundle $P_Q$.

\noindent  $($ii$)$ There is a $C^0$-family of solutions $Y(u)$ with $(m-2)$-parameters
which satisfy $\lim_{u \rightarrow \infty} Y(u) =\xi$.
Each corresponding $g_{a,\vec{b}}(\tau)$ is an ancient solution of the  Ricci flow.

\noindent  $($iii$)$  F\noindent  or each nontrivial subset $\theta \subset \{ 1, \cdots, m\}$
there is a $C^0$-family  of ancient solutions $Y(u)$ with $(|\theta| -2)$-parameters
which satisfy $\lim_{u \rightarrow \infty} Y(u) =v_{\theta}$
 and $Y_k(u) =0$ for $ k \notin \theta$. But they do not produce any ancient solution of
 the Ricci flow on $P_Q$.
\end{theorem}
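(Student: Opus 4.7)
The plan is to establish (i) with a local Lyapunov function at the origin, and (ii), (iii) with the stable manifold theorem applied at the fixed points $\xi$ and $v_\theta$.

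For (i), I consider $V(Y) \doteqdot \sum_{i=1}^m Y_i$ on a neighborhood of the origin in $\mathbb{R}^m_{\geq 0}$. From (\ref{eq RF torus r=1 A2a}),
\begin{equation*}
\frac{dV}{du} = -\sum_i Y_i F_i(Y) = -2\sum_i p_iY_i^2 + \sum_i q_i^2 Y_i^3 + E(Y)\, V(Y).
\end{equation*}
The quadratic term dominates near the origin: Cauchy-Schwarz gives $\sum_i Y_i^2 \geq V^2/m$, while the cubic terms are $O(V^3)$. Hence there exist $c_0, \delta > 0$ such that $dV/du \leq -\delta V^2$ throughout $\{V \leq c_0\} \cap \mathbb{R}^m_{\geq 0}$. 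This region is forward-invariant; integrating the inequality shows $V(u) \to 0$ and thus $Y(u) \to 0$, with the solution existing for all $u \geq 0$. If furthermore $Y(0) \in \mathbb{R}^m_{>0}$, then $Y(u) \in \mathbb{R}^m_{>0}$ throughout by Lemma \ref{lem non ancient or ancient sol}(i), and since $E(Y) \geq 0$ equation (\ref{eq RF torus r=1 A1a}) gives $a(u) \geq a(0) > 0$, so $d\tau = a\,du$ forces $\tau(u) \to \infty$ and $g_{a,\vec{b}}(\tau)$ is an ancient Ricci flow on $P_Q$.

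For (ii), I apply the stable manifold theorem to the vector field $-G$ at $\xi$. By Lemma \ref{lem newly minted Feb 20}(i), $\mathcal{L}_\xi$ is diagonalizable with one negative and $m-1$ positive eigenvalues, so $-\mathcal{L}_\xi$ is hyperbolic with an $(m-1)$-dimensional stable subspace. The resulting local stable manifold $W^s(\xi)$ has dimension $m-1$, and the space of distinct orbits it contains (obtained by modding out the time-translation action along flow lines) is $(m-2)$-dimensional. On each such orbit $Y(u) \to \xi \in \mathbb{R}^m_{>0}$, so $E(Y(u)) \to E(\xi) > 0$; by (\ref{eq RF torus r=1 A1a}) the factor $a(u)$ grows exponentially, hence $\tau(u) \to \infty$ and the Ricci flow is ancient. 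Part (iii) is analogous: restrict (\ref{eq RF torus r=1 A2a}) to the invariant coordinate subspace $\{Y \in \mathbb{R}^m : Y_k = 0 \text{ for } k \notin \theta\}$, whose invariance is given by Lemma \ref{lem non ancient or ancient sol}(i). The restricted system has the same form indexed by $\theta$ with $v_\theta$ as its Einstein-type fixed point, and the analog of Lemma \ref{lem newly minted Feb 20}(i) (equivalently, the upper-left $|\theta|\times|\theta|$ block displayed in the proof of Lemma \ref{lem newly minted Feb 20}(ii)) shows the stable manifold within this subspace has dimension $|\theta|-1$, giving a $(|\theta|-2)$-parameter family of orbits converging to $v_\theta$. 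These orbits have $Y_k \equiv 0$ for $k \notin \theta$, making $b_k = a/Y_k$ undefined, so no Ricci flow on $P_Q$ arises.

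The main analytic step is the Lyapunov computation in (i); parts (ii) and (iii) are routine applications of hyperbolic-ODE theory given the eigenvalue data in Lemma \ref{lem newly minted Feb 20}. The subtlest bookkeeping is the parameter count: the single dimension consumed by time-translation along each orbit converts the $(m-1)$- and $(|\theta|-1)$-dimensional stable manifolds into orbit families of dimensions $m-2$ and $|\theta|-2$ respectively.
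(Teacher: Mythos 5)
Your proof is correct and follows essentially the same route as the paper: part (i) uses the same Lyapunov quantity $V = \sum_i Y_i$ and the same differential inequality $dV/du \lesssim -V^2/m$ near the origin, while parts (ii) and (iii) invoke the hyperbolicity of $-G$ at $\xi$ and $v_\theta$ via Lemma~\ref{lem newly minted Feb 20} (the paper cites Hartman--Grobman where you cite the stable manifold theorem, but these are interchangeable here), with the ancient property delegated to the monotone-growth argument for $a$. The only cosmetic difference is that the paper obtains the ancient property for (ii) by citing Lemma~\ref{lem non ancient or ancient sol}(iii) rather than repeating the $E(Y)\to E(\xi)>0$ estimate you give; the substance is identical.
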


\begin{proof}
$($i$)$ We assume $Y(0) \neq 0$ and  compute that
\begin{equation*}
\frac{d}{du} \sum_{i=1}^m Y_i = - 2 \sum_{i=1}^m p_i Y_i^2 +\sum_{i=1}^m q_i^2Y_i^3
+ E(Y) \sum_{i=1}^m Y_i < - \sum_{i=1}^m p_i Y_i^2 \leq - \frac{1}{m} \left (
\sum_{i=1}^m Y_i \right )^2.
\end{equation*}
The strict inequality above holds when $|Y|$ is sufficiently small
since the positive terms in the expression on the left are of third order in $Y$.
The resulting differential inequality then implies that $\lim_{u \rightarrow \infty} \sum_{j=1}^m Y_i =0$
and (i) follows.

$($ii$)$ The existence of the family follows from Hartman-Grobman theorem
(see \cite[p.59]{PaM82}) and Lemma \ref{lem newly minted Feb 20}(i).
The second part of (ii) follows from Lemma \ref{lem non ancient or ancient sol}(iii).

$($iii$)$ The existence of the family follows from Hartman-Grobman theorem
 and Lemma \ref{lem newly minted Feb 20}(ii).
Since $Y_k(u) =0$ for $ k \notin \theta$,
$Y(u)$  does not give rise to any metric $g_{a,\vec{b}}(\tau)$ on $P_Q$.
Note however that it does give rise to an ancient solution of the Ricci flow on a corresponding
circle bundle over the product of those KE factors whose indices lie in $\theta$.
\end{proof}

If we pass to the metric tensors of the ancient solutions in Theorem \ref{thm flow near origin}(i)
and (ii), then we gain an extra parameter coming from the initial value of $a(\tau)$ (cf the statement
of the Main Theorem in the Introduction). The asymptotic behavior of these ancient solutions
as $\tau \rightarrow \infty$ is described by

\begin{theorem}\label{lem growth a bi at 0}
 Let  $g_{a, \vec{b}} (\tau)$ be the ancient solution of the Ricci flow on the circle bundle $P_Q$ given by
Theorem \ref{thm flow near origin}$($i$)$.
Then

\smallskip
\noindent $($i$)$  for each $i$ and $\tau \in [0,\infty)$, we have
\[ \frac{2n_ip_i}{n_i+1}\, \tau +b_i(0) \leq b_i(\tau) \leq 2p_i \tau +b_i(0).
\]

\noindent $($ii$)$  $\lim_{\tau \rightarrow \infty} a(\tau)$ exists and is positive.
Geometrically, as $\tau \rightarrow \infty$, the radii of the circle fibres of the circle bundle
$P_Q$, equipped with the metric $g_{a, \vec{b}} (\tau)$, increase monotonically to a finite value.
The length-scale on the KE base factors grows like $\sqrt{\tau}$;

\noindent $($iii$)$ As $\tau \rightarrow \infty$, the rescaled metrics $ \tau^{-1} g_{a, \vec{b}}
(\tau)$ on $P_Q$ collapse $($in the Gromov-Hausdorff distance sense$)$ to the Einstein product metric
$2 \sum_i \,p_i g_i$ on the base $M_1 \times \cdots \times M_m$.
\end{theorem}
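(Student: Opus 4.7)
The plan is to exploit the decay $Y(u) \to 0$ furnished by Theorem \ref{thm flow near origin}(i) to treat the system (\ref{eq RF torus r=1 A1})--(\ref{eq RF torus r=1 A2}) as a small perturbation of the decoupled linear model $\frac{da}{d\tau}=0$, $\frac{db_i}{d\tau}=2p_i$. For part (i), the upper bound is immediate from $\frac{db_i}{d\tau}=2p_i - q_i^2 Y_i \leq 2p_i$. For the lower bound, I would shrink $c_0$ in Theorem \ref{thm flow near origin}(i) if necessary so that $c_0 \leq \min_i \frac{2p_i}{(n_i+1)q_i^2}$. The differential inequality $\frac{d}{du}\sum_j Y_j < -\frac{1}{m}\bigl(\sum_j Y_j\bigr)^2$ established in the proof of Theorem \ref{thm flow near origin}(i) then shows that $S(u) \doteqdot \sum_j Y_j(u)$ is strictly decreasing, so $Y_i(u)\leq S(u)\leq c_0 \leq \frac{2p_i}{(n_i+1)q_i^2}$ for all $u\geq 0$. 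Consequently $q_i^2 Y_i \leq \frac{2p_i}{n_i+1}$, and integrating (\ref{eq RF torus r=1 A2}) over $[0,\tau]$ yields the lower bound.

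For part (ii), monotonicity of $a(\tau)$ is immediate from (\ref{eq RF torus r=1 A1}); the issue is to bound $a$ from above. Integrating the quadratic differential inequality $\frac{dS}{du}\leq -\frac{S^2}{m}$ gives $S(u)\leq \frac{mS(0)}{m+S(0)u}$, so $E(Y(u))=O(S^2)=O(1/u^2)$ is integrable on $[0,\infty)$. From (\ref{eq RF torus r=1 A1a}) this forces $\ln a(u) = \ln a(0) + \int_0^u E(Y)\,d\zeta$ to have a finite limit as $u\to\infty$, i.e.\ $a$ converges to a finite positive value $a_\infty$. Since $a(0)\leq a(\tau)\leq a_\infty$, the relation $du/d\tau = 1/a$ shows that $u\to\infty$ if and only if $\tau\to\infty$, so $\lim_{\tau\to\infty} a(\tau) = a_\infty > 0$. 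The geometric interpretations then follow: the circle fiber radius $\sqrt{a(\tau)}$ increases monotonically to $\sqrt{a_\infty}$, while part (i) gives $b_i(\tau)=\Theta(\tau)$ and hence base length-scales of order $\sqrt{\tau}$.

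For part (iii), boundedness of $a$ gives $\tau^{-1} a(\tau)\to 0$, so the fiber direction of $\tau^{-1} g_{a,\vec{b}}(\tau)$ collapses. For the base, $Y_i(\tau)=a(\tau)/b_i(\tau) = O(1/\tau)$ by part (i) and the boundedness of $a$, so integrating (\ref{eq RF torus r=1 A2}) yields
\[
b_i(\tau) = b_i(0) + 2p_i\tau - q_i^2 \int_0^\tau Y_i(s)\,ds = 2p_i\tau + O(\ln \tau),
\]
and therefore $\tau^{-1} b_i(\tau) \to 2p_i$. Combining, $\tau^{-1} g_{a,\vec{b}}(\tau)$ converges in the Gromov-Hausdorff sense to the Einstein product metric $\sum_i 2p_i g_i$ on $M_1\times\cdots\times M_m$ by the standard collapsing argument for Riemannian submersions with totally geodesic fibers whose diameter shrinks.

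The main obstacle is the step in part (ii) that upgrades $Y\to 0$ to the explicit decay rate $S=O(1/u)$: the sharp quadratic inequality $\frac{dS}{du}\leq -\frac{S^2}{m}$ requires the positive cubic terms in $\frac{dS}{du}$ not to dominate, which forces an a priori smallness condition on $S$. This is exactly what Theorem \ref{thm flow near origin}(i) supplies, so the estimate is available globally on $[0,\infty)$. Once $S=O(1/u)$ is in hand, the remaining arguments reduce to integrating explicit ODEs and tracking lower-order error terms.
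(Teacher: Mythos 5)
Your argument is correct, and for part (iii) it coincides with the paper's; but in parts (i) and (ii) you take a genuinely different route. For the lower bound in (i), you shrink $c_0$ and exploit the monotone decrease of $S(u) = \sum_j Y_j(u)$ to keep $Y_i(u) \leq c_0 \leq \min_j \frac{2p_j}{(n_j+1)q_j^2}$ for all $u \geq 0$; the paper instead appeals to (the contrapositive of) Lemma \ref{lem non ancient or ancient sol}(ii), which says that if $Y_i(u_1) > \frac{2p_i}{(n_i+1)q_i^2}$ ever occurs then $u_* < \infty$, so any solution with $Y(u)\to 0$ automatically satisfies $q_i^2 Y_i(u) \leq \frac{2p_i}{n_i+1}$ with no further restriction on $c_0$. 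Your version is fine, but it quietly imposes an extra smallness condition on the initial data that the paper avoids; since $c_0$ is only guaranteed to exist and has no specified size in Theorem \ref{thm flow near origin}(i), the paper's use of Lemma \ref{lem non ancient or ancient sol}(ii) is more robust. For (ii), you derive the quantitative decay $S(u) \leq \frac{mS(0)}{m+S(0)u}$ by integrating the quadratic differential inequality, then note $E(Y(u)) = O(S(u)^2)$ is integrable on $[0,\infty)$ and conclude that $\ln a(u) = \ln a(0) + \int_0^u E(Y)\,d\zeta$ converges; the paper stays in $\tau$-time, uses the bound from (i) together with $b_i(\tau) \geq \frac{2n_ip_i}{n_i+1}\tau + b_i(0)$ to control $\frac{1}{a^2}\frac{da}{d\tau}$, and picks $\epsilon$ small at the end. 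Your $u$-time argument is cleaner and makes the mechanism (integrability of $E(Y(u))$) explicit; the paper's is more elementary and does not require the $O(1/u)$ decay of $S$. Both give the same conclusions.
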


\begin{proof}
(i) For solution $Y(u)$ in Theorem \ref{thm flow near origin}(i) with $Y_i(u) >0$,
by Lemma \ref{lem non ancient or ancient sol} (ii) we have that for all $u$

\begin{equation*}
q_i^2Y_i (u) \leq \frac{2p_i}{n_i +1}, \label{eq Y i upper estimate}
\end{equation*}
 and hence by (\ref{eq RF torus r=1 A2}) we get
\begin{equation*}
\frac{2n_ip_i}{n_i+1} \leq \frac{d b_i}{d \tau}  \leq 2p_i. \label{eq b_i lower est}
\end{equation*}
(i) now follows.

\vskip .1cm
(ii)  By Theorem \ref{thm flow near origin}(i) we have $\lim_{\tau \rightarrow \infty} \frac{a(\tau)}
{b_i(\tau)} = \lim_{u \rightarrow \infty} Y(u)=0$ for each $i$.
Fixing an $\epsilon >0$ to be chosen later, there is a $\tau_0 \geq 0$ such that
$\frac{a(\tau)} {b_i(\tau)} < \epsilon$ for for each $i$ and  $\tau > \tau_0$.
By (i) we have
\begin{equation}
\frac{1}{a(\tau)} > \frac{( (2 p_i +1) \epsilon)^{-1}}{\tau}, \qquad
\text{ for } \tau \geq \max \{\tau_0, b_1(0), \cdots, b_m(0)\} \doteqdot \tau_1. \label{eq tau 0 choice}
\end{equation}

By (\ref{eq RF torus r=1 A1}) and (i) we get
\[
\frac{1}{a^2} \frac{da}{d \tau}  \leq  \sum_{i=1}^m  \frac{(n_i+1)^2 q_i^2}{4n_ip_i^2} \cdot
\frac{1}{(\tau +c_i)^2}  \quad \text{ for } \tau \geq \tau_1,
\]
where $c_i =\frac{b_i(0)(n_i+1)}{2n_ip_i}$.
Integrating this inequality over $[\tau_1,\tau]$ we get
\begin{align*}
- \frac{1}{a(\tau)}+ \frac{1}{a(\tau_1)} & \leq  \sum_{i=1}^m \frac{(n_i+1)^2 q_i^2}{4n_ip_i^2} \left (
-  \frac{1}{\tau +c_i} + \frac{1}{\tau_1 +c_i} \right ) \\
& \leq \left ( \sum_{i=1}^m \frac{(n_i+1)^2 q_i^2}{4n_ip_i^2 } \right) \left(\frac{1}{\tau_1} \right).
\end{align*}
Hence by (\ref{eq tau 0 choice})  we have that for $\tau > \tau_1$
\[
\frac{1}{a(\tau)}  \geq \left (  \frac{1}{(2 p_i +1) \epsilon}
- \sum_{i=1}^m \frac{(n_i+1)^2 q_i^2}{4n_ip_i^2 } \right ) \left( \frac{1}{\tau_1}\right).
\]
If we choose $\epsilon$ small enough, we have proved that $a(\tau)$ is a bounded increasing
function and hence  $\lim_{\tau \rightarrow \infty} a(\tau)$ is finite and positive.

(iii) By (ii) we have $\lim_{\tau \rightarrow \infty} \frac{a(\tau)}{\tau}  =0$.
Since $\lim_{u \rightarrow \infty} Y(u)=0$, by (\ref{eq RF torus r=1 A2})  we have
$\lim_{\tau \rightarrow \infty}  \frac{db_i(\tau)}{d\tau} =2p_i$.
Given any $\varepsilon >0$, there is a $\tau_0$
such that $|\frac{db_i(\tau)}{d\tau} -2p_i| \leq \varepsilon$ for $\tau >\tau_0$.
Hence for $\tau$ large enough we have
\begin{align*}
\left |\frac{ b_i(\tau)}{\tau} -2p_i  \right | &= \left |\frac{b_i(\tau_0)-2p_i \tau_0
+\int_{\tau_0}^\tau \left( \frac{db_i(s)}{ds}  -2p_i \right) ds }{\tau} \right |  \\
 & \leq  \frac{\left | b_i(\tau_0)-2p_i \tau_0 \right | }{\tau} +
 \frac{\int_{\tau_0}^\tau \left | \frac{db_i(s)}{ds}  -2p_i \right | ds}{\tau} \\
 &\leq \varepsilon + \varepsilon.
\end{align*}
Hence $\lim_{\tau \rightarrow \infty} \frac{ b_i(\tau)}{\tau} =2p_i$.
(iii) follows and the theorem is proved.
\end{proof}

\begin{theorem} \label{lem growth a bi at xi}
 Let  $g_{a, \vec{b}} (\tau)$ be the ancient solution of the Ricci flow on the circle bundle $P_Q$
 given by Theorem \ref{thm flow near origin}$($ii$)$.
Then for any  $\epsilon >0$ small enough there is a $\tau_0 >0$ such that
\begin{subequations}
\begin{align}
&   \left (2p_i -q_i^2 (\xi_i + \epsilon) \right ) (\tau - \tau_0) \leq b_{i} (\tau) - b_{i}(\tau_0)
\leq (2p_i -q_i^2 (\xi_i - \epsilon)) (\tau -\tau_0),   \label{eq m=2 metric est bi} \\
&  (E(\xi) - \epsilon) (\tau -\tau_0) \leq a(\tau) - a(\tau_0) \leq (E(\xi) +\epsilon) (\tau - \tau_0),
 \label{eq m=2 metric est a}
\end{align}
\end{subequations}
for $\tau \geq \tau_0$.
Hence
\begin{equation}
 \lim_{\tau \rightarrow \infty} \frac{a(\tau)}{\tau } = E(\xi), \qquad
\lim_{\tau \rightarrow \infty} \frac{b_{i}(\tau)}{\tau} = E(\xi) \xi_i^{-1}.
\label{eq curv asmp m2 omega 1 2}
\end{equation}
Geometrically,  as $\tau \rightarrow \infty$, the circle bundle $P_Q$, equipped with the
metric ${\tau}^{-1} g_{a, \vec{b}} (\tau)$, converges to a multiple of the Einstein metric
corresponding to $\xi$.
 \end{theorem}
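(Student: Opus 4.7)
The plan is to exploit Theorem \ref{thm flow near origin}(ii), which gives $\lim_{u\to\infty}Y(u)=\xi$, and convert this into asymptotic control of $a(\tau)$ and $b_i(\tau)$ by integrating the original backwards Ricci flow system (\ref{eq RF torus r=1 A1})--(\ref{eq RF torus r=1 A2}). First I would verify that the convergence takes place as $\tau\to\infty$ just as well as $u\to\infty$. Since the solution is ancient, the proof of Lemma \ref{lem non ancient or ancient sol}(iii) already supplies the inequality $\tau(u)-\tau_0\geq a(\tau_0)(u-u_0)$, so $u\to\infty$ forces $\tau\to\infty$; the converse is automatic because $du/d\tau=1/a>0$ and $u(\tau)$ is strictly increasing with no finite blow-up on the ancient interval. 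Consequently, for any $\epsilon>0$ small enough (in particular $\epsilon<\min_i\xi_i$), I may choose $\tau_0$ so that $|Y_i(\tau)-\xi_i|<\epsilon$ for every $i$ and every $\tau\geq\tau_0$.

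Next I would derive (\ref{eq m=2 metric est bi}) by direct integration. From (\ref{eq RF torus r=1 A2}) and the bound just established,
\[
  2p_i-q_i^2(\xi_i+\epsilon)\;\leq\;\frac{db_i}{d\tau}\;=\;2p_i-q_i^2Y_i(\tau)\;\leq\;2p_i-q_i^2(\xi_i-\epsilon)
\]
on $[\tau_0,\infty)$, and integrating over $[\tau_0,\tau]$ yields the claimed two-sided bound on $b_i(\tau)-b_i(\tau_0)$. For (\ref{eq m=2 metric est a}) I would rewrite (\ref{eq RF torus r=1 A1}) as $da/d\tau=E(Y(\tau))$, with $E$ the function defined in (\ref{E-defn}), and use the Lipschitz-type estimate $|E(Y)-E(\xi)|\leq\sum_i n_iq_i^2|Y_i^2-\xi_i^2|\leq C\,\epsilon$ with $C$ depending only on the fixed constants $n_i,q_i,\xi_i$. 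After shrinking $\epsilon$ by a factor of $C$ at the outset, integration of $|da/d\tau-E(\xi)|\leq\epsilon$ over $[\tau_0,\tau]$ produces (\ref{eq m=2 metric est a}).

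The limits in (\ref{eq curv asmp m2 omega 1 2}) then follow by dividing both inequalities by $\tau$, letting $\tau\to\infty$, and finally letting $\epsilon\to 0$. For the geometric conclusion I observe that
\[
  \tau^{-1}g_{a,\vec b}(\tau)=\frac{a(\tau)}{\tau}\,\sigma\otimes\sigma+\sum_{i=1}^m\frac{b_i(\tau)}{\tau}\,g_i,
\]
whose coefficients converge by (\ref{eq curv asmp m2 omega 1 2}) to $E(\xi)$ and $E(\xi)\xi_i^{-1}$; since $\sigma\otimes\sigma$ and the $g_i$ are fixed smooth tensors on the compact total space $P_Q$, this gives uniform $C^\infty$ convergence to $E(\xi)\bigl(\sigma\otimes\sigma+\sum_i\xi_i^{-1}g_i\bigr)$, which is exactly $E(\xi)$ times the unit-$a$ representative of the Einstein metric associated with $\xi$ in Remark \ref{Einstein-point}. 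The only genuine subtlety is ensuring that the translation between the $u$-picture (where $Y\to\xi$ is given) and the $\tau$-picture (where the estimates are asserted) is uniform, so that the same $\epsilon$-window governs both; this is routine in view of the monotone bijective nature of $u(\tau)$ on the ancient interval, so I do not expect a serious obstacle.
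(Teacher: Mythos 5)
Your proof is correct and follows essentially the same route as the paper: use the convergence $Y(u)\to\xi$, translate to the $\tau$-variable, and integrate the bounds on $db_i/d\tau = 2p_i - q_i^2 Y_i$ and $da/d\tau = E(Y)$ over $[\tau_0,\tau]$. The only difference is that you make the Lipschitz bound for $E$ explicit (the paper simply invokes $\lim_{\tau\to\infty} da/d\tau = E(\xi)$), which is a fine but inessential elaboration.
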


\begin{proof}
Since $\lim_{u \rightarrow \infty}Y(u) = \xi$, given any $\epsilon >0$ small enough
we may choose $u_0$ such that
$0< \xi_i -\epsilon \leq Y_i (u) \leq \xi_i + \epsilon$ for each $i$ and $u \geq u_0$.
Let $\tau_0 =\tau(u_0)$, then we get by (\ref{eq RF torus r=1 A2})
that for $\tau \geq \tau_0$
\begin{equation*}
 2p_i -q_i^2 (\xi_i +\epsilon) \leq \frac{d b_{i}}{d \tau} \leq 2p_i -q_i^2 (\xi_i-\epsilon),
\end{equation*}
from which (\ref{eq m=2 metric est bi}) follows.

Since $\lim_{\tau \rightarrow \infty} \frac{da}{d \tau} =E(\xi)$, given $\epsilon >0$,
by choosing $u_0$ larger if necessary, we may assume further that
 $E(\xi) -\epsilon \leq \frac{da}{d \tau} \leq E(\xi) +\epsilon$
for $\tau \geq \tau_0$.
(\ref{eq m=2 metric est a})  now follows.
\end{proof}

%%%%%%%%%%%%%%%%%%%%%%%%%%%%%%%%%%%%%%%%%%
%%%%%%%%%%%%%%%%%%%%%%%%%%%%%%%%%%%%%%%%%%

\subsection{\bf Curvature properties of the ancient solutions on circle bundles}

In this subsection we consider the curvature and some other geometric properties of the
ancient solutions in Theorem \ref{thm flow near origin}(i) and (ii) near $\tau = \infty$.

\begin{theorem} \label{prop asym beha at infty}
Let $g_{a, \vec{b}} (\tau)$ be the ancient solution of the Ricci flow in Theorem
\ref{thm flow near origin}$($i$)$.

\smallskip
\noindent $($i$)$  $g_{a, \vec{b}} (\tau)$ is of type I as $\tau \rightarrow \infty$, i.e.,
 there is a constant $C<\infty$ such that for $\tau$ large
\[
\tau \cdot \sup_{x \in P_Q}|\operatorname{Rm}_{g_{a, \vec{b}} (\tau)} (x) |_{g_{a, \vec{b}}
(\tau)} \leq C.
\]
Note that in Theorem \ref{lem growth a bi at 0}$($iii$)$ we have proved the collapsing of the type I
rescaled metric $ \tau^{-1} g_{a, \vec{b}}(\tau)$ on $P_Q$.

\noindent $($ii$)$ for any $\kappa >0$, the solution $g_{a, \vec{b}} (\tau)$ is not
$\kappa$-noncollapsed at all scales.
\end{theorem}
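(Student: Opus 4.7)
The plan is to establish the type I curvature bound (i) through an explicit Riemannian submersion computation combined with the asymptotics of Theorem \ref{lem growth a bi at 0}, and then to deduce (ii) from (i) together with a straightforward volume estimate.

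For part (i), the key observation is that $\pi: (P_Q, g_{a,\vec{b}}(\tau)) \to (M_1 \times \cdots \times M_m, \sum_i b_i(\tau) g_i)$ is, at each fixed $\tau$, a Riemannian submersion with totally geodesic circle fibers and curvature form $F^1 = \sum_i q_i \omega_i$. O'Neill's formulas organize the sectional curvatures of $g_{a,\vec{b}}(\tau)$ into four types: (a) horizontal planes lifted from a single factor $M_i$, yielding $\operatorname{sec}_g = b_i^{-1} \operatorname{sec}_{g_i} - \tfrac{3a}{4} q_i^2 \omega_i(\cdot,\cdot)^2/b_i^2$; (b) horizontal planes spanning two distinct factors $M_i$ and $M_j$, which vanish because the base is a Riemannian product and $F^1$ is block-diagonal across factors; (c) mixed horizontal-vertical planes, of order $\tfrac{a}{4}q_i^2 \omega_i(\cdot,\cdot)^2/b_i^2$; and (d) no vertical-vertical contribution since the fiber is $1$-dimensional. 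Plugging in the asymptotics from Theorem \ref{lem growth a bi at 0}, namely $a(\tau)$ bounded with a positive finite limit and $b_i(\tau) \sim 2p_i\tau$, every nonzero term is $O(1/\tau)$ (and the A-tensor corrections are in fact $O(1/\tau^2)$). Since the manifold is compact and a bound on sectional curvatures controls $|\operatorname{Rm}|$, we conclude $\tau \sup_{P_Q} |\operatorname{Rm}_{g(\tau)}| \leq C$.

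For part (ii), fix $\kappa > 0$. By (i) choose $C_0 > 0$ with $|\operatorname{Rm}_{g(\tau)}| \leq C_0/\tau$ for all large $\tau$, and set $r_\tau \doteqdot \sqrt{\tau/C_0}$. Then at every $\tau' \in [\tau, \tau + r_\tau^2]$ one has $|\operatorname{Rm}_{g(\tau')}| \leq C_0/\tau' \leq r_\tau^{-2}$, so the parabolic curvature hypothesis of $\kappa$-noncollapsing holds at scale $r_\tau$ for time $t = -\tau$. The total volume satisfies
\[
\operatorname{vol}(P_Q, g_{a,\vec{b}}(\tau)) = 2\pi\sqrt{a(\tau)} \prod_{i=1}^m b_i(\tau)^{n_i} \operatorname{vol}_{g_i}(M_i) = O(\tau^{(n-1)/2}) = O(r_\tau^{n-1}),
\]
using $a(\tau) = O(1)$, $b_i(\tau) = O(\tau)$, and $\sum_i n_i = (n-1)/2$. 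Hence for every $x \in P_Q$,
\[
\frac{\operatorname{vol}_{g(\tau)}(B(x, r_\tau))}{r_\tau^n} \leq \frac{\operatorname{vol}(P_Q, g(\tau))}{r_\tau^n} = O(r_\tau^{-1}) \longrightarrow 0
\]
as $\tau \to \infty$, so this ratio drops below $\kappa$ for $\tau$ large, exhibiting a scale at which $\kappa$-noncollapsing fails.

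The main obstacle is the bookkeeping in part (i): one must verify that the O'Neill A-tensor corrections (encoding the curvature form $F^1$) decay at least as fast as the base sectional curvature term $b_i^{-1} \operatorname{sec}_{g_i}$. Fortunately the product structure of the base together with the fact that $\omega_i$ is supported on the $M_i$-directions makes the cross-factor terms vanish identically, and the remaining within-factor corrections scale like $a/b_i^2 = O(1/\tau^2)$, which is even smaller than the dominant $O(1/\tau)$ base contribution. Once the $|\operatorname{Rm}| = O(1/\tau)$ bound is secured, part (ii) is an immediate volume-vs-scale comparison using the fact that the circle fibers have asymptotically bounded length while the base factors grow like $\sqrt{\tau}$.
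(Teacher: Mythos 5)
Your proof is correct and takes essentially the same approach as the paper: for (i), the O'Neill submersion curvature formulas together with the asymptotics $a(\tau)=O(1)$, $b_i(\tau)\sim 2p_i\tau$ from Theorem \ref{lem growth a bi at 0} give $|\operatorname{Rm}|=O(\tau^{-1})$; for (ii), the $O(\tau^{(n-1)/2})$ total volume forces the volume ratio at the natural curvature scale $r_\tau=\sqrt{\tau/C_0}$ to go to zero. Organizing the curvature estimates by sectional curvature type rather than by curvature tensor components, and working directly at scale $r_\tau$ rather than passing to the rescaled metric $\tau^{-1}g_{a,\vec{b}}(\tau)$, are cosmetic rather than substantive differences from the paper's argument.
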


\begin{proof}
(i) Recall that the metrics $g_{a, \vec{b}} (\tau) = a(\tau) \sigma( \cdot) \otimes \sigma (\cdot)
+ \sum_i b_i(\tau)\, g_i$ from (\ref{eq family of Riem metric r=1}) are Riemannian submersion type
metrics with totally geodesic fibres. In the rest of the proof we will drop the subscripts
in $g_{a, \vec{b}}$ in order to make notation less cumbersome. We will also take as background
metric $g_0 :=\sigma( \cdot) \otimes \sigma (\cdot) + \sum_i g_i$,
and choose a $g_0$-orthonormal basis $\{e_0,e^{(1)}_{1}, \cdots,e^{(1)}_{2n_1}, e^{(2)}_1,
\cdots, e^{(m)}_{2n_m} \}$  where $e_0$ is tangent to the fibres and $e^{(i)}_j$ are basic
horizontal lifts of tangent vectors to the $i$th factor of the base. Then the corresponding
$g (\tau)$-orthonormal basis is
\begin{equation}
\left \{ \tilde{e}_0 \doteqdot \frac{e_0}{\sqrt{a(\tau)}}, \frac{e^{(1)}_{1}}{\sqrt{b_1(\tau)}}, \cdots,
\frac{e^{(1)}_{2n_1}}{\sqrt{b_1(\tau)}}, \frac{ e^{(2)}_1}{\sqrt{b_2(\tau)}}, \cdots,
\frac{e^{(m)}_{2n_m}}{\sqrt{b_m(\tau)}} \right \}.
\label{eq g tau orthonorm basis}
\end{equation}

In analysing the curvature tensor of $g(\tau)$, we use the formulas given in \cite[p.241]{Bes87}.
Since the fibres are totally geodesic and $1$-dimensional, as in \cite{WZ90}, it suffices to consider
only the following components of $\operatorname{Rm}(g(\tau))$:
\begin{subequations}
\begin{align}
& g(R_{X,\tilde{e}_0}(Y), \tilde{e}_0) =g((\nabla_{\tilde{e}_0}A)_X Y, \tilde{e}_0)
  +g(A_X \tilde{e}_0, A_Y \tilde{e}_0), \label{eq rm 1 g a b tau} \\
&  g(R_{X,Y}(Z), \tilde{e}_0) =g((\nabla_{Z}A)_X Y, \tilde{e}_0),
  \label{eq rm 2 g a b tau} \\
&  g(R_{X,Y}(Z), W) =g^*(R^*(X,Y)Z,W) -2g(A_X Y, A_Z W) \notag \\
&   \hspace{1.5in}  +g(A_Y Z , A_X W)  - g(A_X Z, A_Y W),   \label{eq rm 3 g a b tau}
\end{align}
\end{subequations}
where $X,Y,Z,W$ are horizontal vectors, $\nabla$ is the Levi-Civita connection of $g(\tau)$,
and $R^*$ is the curvature tensor of the base metric $g^*(\tau) = \sum_{i} b_i(\tau)\, g_i$.
Furthermore,  $A$ is the  O'Neill $(2, 1)$-tensor for the Riemannian submersion, which has
the properties (a) $A_X \tilde{e}_0$ is the horizontal component of $\nabla_X \tilde{e}_0$,
and (b) $2 A_X Y$ is the vertical component of Lie bracket $[X,Y]$ (and up to a sign is
the connection form $\sigma$).

For term $g(A_XY, A_Z W)$ in (\ref{eq rm 3 g a b tau}) we compute
\begin{align*}
g(A_XY, A_Z W) =& g(A_XY, \frac{e_0}{\sqrt{a}}) +g(\frac{e_0}{\sqrt{a}}, A_Z W)  \\
=& \,\frac{1}{4a} \,g(F(X,Y),e_0) \cdot g(F(Z, W),e_0) \\
=& \,\frac{a(\tau)}{4} \left (\sum_i q_i \omega_i(X,Y) \right ) \cdot \left (\sum_j q_j
\omega_j (Z,W) \right ),
\end{align*}
where we have used \cite[9.54(c)]{Bes87} to get the second equality and $F$ is the curvature form
of our connection $\sigma$. The term $\omega_i(X,Y)$ is nonzero only if $X$ and $Y$ are tangent to $M_i$.
Hence if $X, Y, Z, W$ are among the $g(\tau)$-orthonormal basis vectors in (\ref{eq g tau orthonorm basis}),
we have
\begin{equation*}
g(A_XY, A_Z W) \sim O \left (\frac{a(\tau)}{b_i(\tau)b_j(\tau)} \right ) \sim O \left
(\tau^{-2} \right ) \quad \text{ as } \tau \rightarrow \infty.
\end{equation*}

The term $g^*(R^*(X,Y)Z,W)$ in (\ref{eq rm 3 g a b tau}) is nonzero only if $X, Y, Z, W$ are tangent to $M_k$
for some $k$. In this case it is the $(0,4)$-curvature tensor of metric $b_k(\tau)g_k$.
 Hence if $X, Y, Z, W$ occur among the $g(\tau)$-orthogonal basis in (\ref{eq g tau orthonorm basis}), we have
\begin{equation*}
g^*(R^*(X,Y)Z,W) \sim O \left (\frac{1}{b_i(\tau)} \right ) \sim O \left (\tau^{-1} \right )
\quad \text{ as } \tau \rightarrow \infty.
\end{equation*}

Let $\{\tilde{e}_l \}$ be the set of all horizontal vectors
from (\ref{eq g tau orthonorm basis}). The term $g(A_X \tilde{e}_0, A_Y \tilde{e}_0)$ in
(\ref{eq rm 1 g a b tau}) can be evaluated as follows.

\begin{align*}
g(A_X \tilde{e}_0, A_Y \tilde{e}_0) =& \frac{1}{a(\tau)} \sum_l \,g(A_X e_0, \tilde{e}_l)\cdot g(
\tilde{e}_l, A_Y e_0)  \\
=& \frac{1}{a(\tau)} \sum_l \,g(e_0,A_X \tilde{e}_l) \cdot g(e_0,A_Y  \tilde{e}_l)  \\
= &  \frac{1}{4a(\tau)} \sum_l \,g(e_0,F(X,\tilde{e}_l)) \cdot g(e_0,F(Y,  \tilde{e}_l))  \\
=& \frac{a(\tau)}{4} \sum_l \,\left ( \sum_i q_i \omega_i (X, \tilde{e}_l ) \right ) \cdot
 \left ( \sum_j \,q_j \omega_j (Y, \tilde{e}_l ) \right ) ,
\end{align*}
where we have used (\cite[9.21d]{Bes87})) to get the second equality.
The product term $\omega_i (X, \tilde{e}_l) \omega_j (Y, \tilde{e}_l)$ from the last line
is nonzero if both $X$ and  $Y$ are tangent to some $M_k$.
Hence if $X, Y$ occur among the $g(\tau)$-orthogonal
basis in (\ref{eq g tau orthonorm basis}), then
\[
g(A_X \tilde{e}_0, A_Y \tilde{e}_0)  \sim O \left (\frac{a(\tau)}{b_k^2(\tau)} \right )
 \sim O \left (\tau^{-2} \right ) \quad \text{ as } \tau \rightarrow \infty.
\]

The term $g((\nabla_Z A)_X Y, \tilde{e}_0)$ in (\ref{eq rm 2 g a b tau}) equals to zero (\cite[p.243]{WZ90}).
This is essentially the covariant derivative of $F$.

Finally the term $g((\nabla_{\tilde{e}_0}A)_X Y, \tilde{e}_0)$ in (\ref{eq rm 1 g a b tau}) can be computed as
follows (\cite[p.243]{WZ90}):
\begin{align}
g((\nabla_{\tilde{e}_0}A)_X Y, \tilde{e}_0) =& \frac{1}{a(\tau)} \,g((\nabla_{e_0}A)_X Y ,e_0 ) \notag \\
 = & \frac{1}{a(\tau)} \, \left ( -g({\mathscr L}_{e_0}X, {\mathscr L}_{e_0}Y) + g({\mathscr L}_{e_0}Y,
 {\mathscr L}_{e_0}X) \right ) \notag \\
= & \,0,  \label{eq nabla e A X Y zero}
\end{align}
where ${\mathscr L}_{e_0}$ is, up to a factor of $\frac{1}{2}$, the skew symmetric operator
corresponding to the curvature $2$-form via the metric $g$.

Hence
\[
\sup_{x \in P} \| \operatorname{Rm}_{g(\tau)} (x) \|_{g(\tau)}^2 =\sup_{x} \sum_{l_1,l_2,l_3,l_4}
R(\tilde{e}_{l_1}, \tilde{e}_{l_2}, \tilde{e}_{l_3}, \tilde{e}_{l_4})^2 (x) \sim  O \left (\tau^{-2} \right )
\quad \text{ as } \tau \rightarrow \infty,
\]
where we recall that $\{ \tilde{e}_{l} \}$ consists of all the $g(\tau)$-orthonormal horizontal vectors
from (\ref{eq g tau orthonorm basis}). This proves our first assertion that the ancient solution
$g_{a, \vec{b}} (\tau)$ is of type I as $\tau \rightarrow \infty$.

\vskip .1cm
(ii) By (i) and scaling  we know that the metric
$\tilde{g}(\tau) \doteqdot \frac{1}{\tau}\, g(\tau)$ has bounded
curvature for $\tau \geq 1$. On the other hand the volume is given by
\begin{align*}
\operatorname{Vol}_{\tilde{g}((\tau)}(P) = & \,\,\frac{1}{\tau^{(1+ \sum_{i=1}^m 2n_i)/2}}
\operatorname{Vol}_{g (\tau)  } (P) \\
 \sim & \,\,\tau^{-(1+ \sum_{i=1}^m 2n_i)/2} \cdot \tau^{\sum_{i=1}^m n_i}
\operatorname{Vol}_{g_0 } (P) \\
=& \,\tau^{-1/2} \operatorname{Vol}_{g_0 } (P) \rightarrow 0
\end{align*}
as $\tau \rightarrow \infty$. Hence $\tilde{g}(\tau)$ {\em cannot} be $\kappa$-noncollapsed
at all scales (uniformly in $\tau$).

 This proves the theorem.
\end{proof}

\begin{rmk} \label{Remk scalr curv form}
(i) Note that from (\ref{eq Ricci curv circle bundle}) the nonzero components of
the Ricci tensor of $g_{a, \vec{b}} (\tau)$ are given by

\begin{align*}
& \operatorname{Rc}_{g_{a, \vec{b}} (\tau)} (\tilde{e}_0,\tilde{e}_0) =\frac{1}{2}\, \sum_{i} n_iq_i^2
\frac{a(\tau)}{b_i(\tau)^2} \sim O \left (\tau^{-2} \right ),  \\
& \operatorname{Rc}_{g_{a, \vec{b}} (\tau)} \left(\frac{e_j^{(i)}}{\sqrt{b_i(\tau)}}, \frac{e_j^{(i)}}{\sqrt{b_i(
\tau)}} \right) = \frac{p_i}{b_i (\tau)} -\frac{ q_i^2}{2} \frac{a(\tau)}{b_i(\tau)^2}
\sim O \left ( \tau^{-1} \right )
\end{align*}
as $\tau \rightarrow \infty$.
By Lemma \ref{lem non ancient or ancient sol}(ii) we have $ Y_i \leq  2p_i/((n_i+1)q_i^2),\, i=1, \cdots,m$,
for any ancient solutions in  Theorem \ref{thm flow near origin}(i) and (ii).
Hence these solutions all have positive Ricci curvature over their interval of existence.
For completeness we record the formula for scalar curvature below,
\[
R_{g_{a, \vec{b}} (\tau)} = \frac{1}{a(\tau)} \sum_{i=1}^m \left (2n_i p_i \frac{a(\tau)}{b_i(\tau)}
-\frac{1}{2} n_iq_i^2 \frac{a(\tau)^2}{b_i(\tau)^2} \right ).
\]

(ii) About the sectional curvatures of $g_{a, \vec{b}} (\tau)$, note that by (\ref{eq rm 1 g a b tau})
 and (\ref{eq nabla e A X Y zero}) we know that $K(e_0 \wedge X)$ is always nonnegative for horizontal
 vectors $X$. On the other hand $K(X \wedge Y) =K^*(X \wedge Y) -3|A_XY|^2$ where $K^*$ is the sectional
 curvature of metric $g^* =\sum_i b_i(\tau)\, g_i$. So in general we can have negative sectional curvatures.
\end{rmk}

\medskip
Next we analyse the behavior of the other class of ancient solutions in Theorem \ref{thm flow near origin}.

\begin{theorem} \label{prop asym beha xi at infty}
Let $g_{a, \vec{b}} (\tau)$ be the ancient solution of the Ricci flow on circle bundle $P_Q$
given in Theorem \ref{thm flow near origin}$($ii$)$.

\smallskip
\noindent $($i$)$ The solution $g_{a, \vec{b}} (\tau)$ is of type I as $\tau \rightarrow \infty$, i.e.,
 there is a constant $C<\infty$ such that for $\tau$ large
\[
\tau \cdot \sup_{x \in P_Q}|\operatorname{Rm}_{g_{a, \vec{b}} (\tau)} (x) |_{g_{a, \vec{b}} (\tau)} \leq C.
\]
Note that in Theorem \ref{lem growth a bi at xi} we have proved that the type I rescaled metrics
$\tau^{-1} g_{a, \vec{b}}(\tau)$ on $P_Q$ converge to a multiple of the Einstein metric
 corresponding to $\xi$.

\noindent $($ii$)$ There is a $\kappa >0$ so that the solution $g_{a, \vec{b}} (\tau)$ is
$\kappa$-noncollapsed at all scales.
\end{theorem}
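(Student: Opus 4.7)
The plan is to mirror the proof of Theorem \ref{prop asym beha at infty}, now exploiting the \emph{linear} growth of both $a(\tau)$ and $b_i(\tau)$ supplied by Theorem \ref{lem growth a bi at xi}. In contrast to the collapsing case, where $a(\tau)$ remained bounded while $b_i(\tau) \sim \tau$, here $a(\tau)/\tau \to E(\xi)$ and $b_i(\tau)/\tau \to E(\xi)/\xi_i$, so that both horizontal and fibre directions scale comparably, and the rescaled metric $\tilde{g}(\tau) \doteqdot \tau^{-1} g_{a,\vec{b}}(\tau)$ converges on the closed manifold $P_Q$ to the Einstein metric $g_E$ corresponding to $\xi$.

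For part (i), I would reuse, essentially verbatim, the O'Neill-formula expansion of $\operatorname{Rm}(g(\tau))$ carried out in the proof of Theorem \ref{prop asym beha at infty}(i), using the $g(\tau)$-orthonormal frame $\{\tilde{e}_0,\, e_j^{(i)}/\sqrt{b_i(\tau)}\}$. The relevant building blocks are: (a) the two covariant-derivative-of-$A$ terms, which still vanish for the same reasons; (b) $g(A_X Y, A_Z W) \sim a(\tau)/(b_i(\tau) b_j(\tau))$; (c) $g(A_X \tilde{e}_0, A_Y \tilde{e}_0) \sim a(\tau)/b_k(\tau)^2$; and (d) the base-curvature term $g^*(R^*(X,Y)Z,W) \sim 1/b_k(\tau)$. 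Substituting the new asymptotics $a(\tau), b_i(\tau) \sim \tau$, each nonzero component is $O(\tau^{-1})$, giving the type I bound $|\operatorname{Rm}_{g(\tau)}|_{g(\tau)} \sim \tau^{-1}$ uniformly on $P_Q$.

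For part (ii), since $\kappa$-noncollapsing at all scales is scale invariant, it is enough to show that the rescaled family $\tilde{g}(\tau)$ is uniformly $\kappa$-noncollapsed. The Hartman-Grobman analysis behind Theorem \ref{thm flow near origin}(ii) places $Y(u)$ on the stable manifold of the hyperbolic fixed point $\xi$, producing exponential convergence $Y(u) \to \xi$ and, via the $u \leftrightarrow \tau$ change of variable, smooth convergence $\tilde{g}(\tau) \to g_E$ on the compact space $P_Q$. In particular $\tilde{g}(\tau)$ inherits uniform two-sided sectional-curvature bounds and a uniform positive lower bound on the injectivity radius from $g_E$. Because $g_E$ is Einstein with positive Ricci and $P_Q$ is closed, $|\operatorname{Rm}_{g_E}|^2 \geq \frac{2R^2}{n(n-1)}$ is bounded below away from zero, so the hypothesis $|\operatorname{Rm}_{\tilde{g}(\tau)}| \leq r^{-2}$ can be satisfied only for $r$ bounded above by some fixed $r_0$; on that range Bishop-Gromov together with the uniform injectivity radius bound yields $\operatorname{Vol}_{\tilde{g}(\tau)}(B(x,r))/r^n \geq \kappa$. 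On any bounded $\tau$-interval the conclusion is immediate by continuity.

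The main obstacle is upgrading the coefficient-level convergence in Theorem \ref{lem growth a bi at xi} to the genuine smooth $P_Q$-convergence of $\tilde{g}(\tau)$ to $g_E$ needed to transfer bounds on curvature, injectivity radius, and $|\operatorname{Rm}|$ from $g_E$; this requires a little care with the stable-manifold analysis near $\xi$ but is otherwise standard. Once that is in hand, both assertions reduce to the observation that the \emph{rescaled} geometry has, in the limit, a fixed smooth Einstein metric with positive Ricci on a closed manifold, for which type I decay and $\kappa$-noncollapsing at all scales are classical.
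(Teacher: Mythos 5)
Your proposal is correct and follows the paper's route: part (i) reuses the O'Neill-formula curvature expansion from the proof of Theorem \ref{prop asym beha at infty}(i), substituting the new linear asymptotics for both $a(\tau)$ and $b_i(\tau)$ from Theorem \ref{lem growth a bi at xi}, so that every nonzero curvature component is $O(\tau^{-1})$; part (ii) combines scale-invariance of $\kappa$-noncollapsing with the convergence of $\tau^{-1}g_{a,\vec{b}}(\tau)$ to a non-flat Einstein metric on the compact $P_Q$. The "main obstacle" you flag at the end is not actually there: since the rescaled metrics are given explicitly by the ansatz (\ref{eq family of Riem metric r=1}) with fixed $\sigma$ and $g_i$ and scalar coefficients $a(\tau)/\tau$, $b_i(\tau)/\tau$ that converge by Theorem \ref{lem growth a bi at xi}, smooth convergence of the metric tensors on the fixed compact manifold $P_Q$ is immediate, and no further stable-manifold or Cheeger-Gromov machinery is required; the paper simply notes that convergence to a non-flat limit gives a uniform volume lower bound on unit balls, which together with non-flatness and compactness (to handle $\tau$ in a bounded interval) yields uniform $\kappa$-noncollapsing.
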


\begin{proof}
(i) This follows from an inspection of the proof of Theorem \ref{prop asym beha at infty}(i) and the linear
growth of $b_i(\tau)$ and $a(\tau)$ given by (\ref{eq m=2 metric est bi})--(\ref{eq m=2 metric est a}).

(ii) Since $P_Q$ is compact and $g_{a, \vec{b}}(\tau)$ is non-flat metric, there is a $\kappa_1 >0$ such
that metrics $g_{a, \vec{b}}(\tau)$ is $\kappa_1$-noncollapsed for $\tau \in [0,1]$.
Since $\tau^{-1} g_{a, \vec{b}}(\tau)$ converges to an Einstein metric, there is a $v_0 >0$ such that
$\operatorname{Vol}_{\tau^{-1} g_{a, \vec{b}}(\tau)}(B_{\tau^{-1} g_{a, \vec{b}}(\tau)}(x,1))
 \geq v_0$ for all $x \in P_Q$ and $\tau \geq 1$.
Since the Einstein metric is non-flat, there is a $\kappa_2 >0$ such that $\tau^{-1} g_{a, \vec{b}}(\tau)$
is $\kappa_2$-noncollapsed  for $\tau \geq 1$.
Since $\kappa$-noncollaping is a property preserved by scaling, hence  $g_{a, \vec{b}}(\tau)$
is $\kappa_2$-noncollapsed for $\tau \geq 1$.
Now (ii) follows.
\end{proof}

\begin{rmk} \label{cohomogeneity}
Recall that the cohomogeneity of a (compact) Riemannian manifold is the codimension
of a principal orbit of the action of its isometry group. Thus for an $n$-dimensional
Riemannian manifold, cohomogeneity $n$ means that its isometry group is discrete. In
section 3 of \cite{WZ90}, the cohomogeneity of the Einstein metrics constructed there
was studied, and it was shown that, provided that the topology of the torus bundles is
sufficiently complicated in a suitable sense, then the cohomogeneity of the Einstein metrics
is equal to the sum of the cohomogeneities of the KE factors in the base. The same arguments,
especially Lemma 3.6 there, can be used to show that the same equation holds for the
cohomogeneities of our ancient solutions, i.e.,
$$ {\rm coh}(P_Q, g_{h, \vec{b}}(\tau)) = \sum_{i=1}^{m} \, {\rm coh}(M_i, g_i).$$
(Note that $b_i(\tau)g_i$ has the same isometry group as $g_i$.)

For the convenience of the reader, we give a short summary of the main ideas involved.
The isometries of the base of our torus bundles certainly lift to the total spaces because
these are isometric toral quotients of the product of the universal covers of the principal circle
bundles associated to the anti-canonical line bundles over the base factors. The
equation above would then hold if all isometries of the total spaces
map the totally geodesic fibres to each other. Lemma 3.6 in \cite{WZ90} gives a lower bound
for the least period of all non-trivial closed curves which are projections of periodic
geodesics in the total space. Since an isometry must map closed geodesics to
closed geodesics, the assumption on the topology of the bundles ensures that least period
of the fibre geodesics is less than this lower bound. In such a situation any isometry must
be the lift of an isometry from the base or comes from the isometric torus action.

For our ancient solutions described in Theorem \ref{lem growth a bi at 0}, the metrics
$\tau^{-1} g_{a, \vec{b}}(\tau)$ automatically have very short fibres when $\tau$ is large
and so the arguments sketched above apply without any assumption on the topology of the bundles.
(Note that the bounds needed for applying Lemma 3.6 in \cite{WZ90} follow from our bounds
for $q_iY(u)$.) Since an overall scaling does not affect cohomogeneities, the equality between
cohomogeneities applies to the metrics $g_{a, \vec{b}}(\tau)$ also for large enough $\tau$.

For our ancient solutions described in Theorem \ref{lem growth a bi at xi}, when $\tau$ is
sufficiently large, the metrics $\tau^{-1} g_{a, \vec{b}}(\tau)$ are close to the Einstein metric,
and so with the same assumptions on the topology of the bundle, the desired equation
for the cohomogeneities hold.

We are then in a position to apply Theorem 1.2 in  \cite{Ko10} on the stability of
isometry groups under the Ricci flow to get the above equation for all values of $\tau$.
Therefore, our ancient solutions in general can have arbitrary cohomogeneity by suitable choices
of the Fano KE factors and, if necessary, the topology of the bundles.
\end{rmk}

%%%%%%%%%%%%%%
\subsection{\bf The forward limits of some ancient solutions on  circle bundles}

In this subsection we investigate when ancient solutions from Theorem \ref{thm flow near origin}(i)
have the property that $\lim_{u \rightarrow -\infty} Y(u) =\xi$.
Actually such ancient solutions lie in $\Omega_+$.
In \S \ref{subsec noncollapsing anc sol m2} we will consider some ancient solutions
from Theorem \ref{thm flow near origin}(ii) for which $\lim_{u \rightarrow -\infty} Y(u) =
v_{\theta}$. Note that part (i) of the following theorem has some overlap with
Theorem \ref{thm flow near origin}(i).

\begin{theorem} \label{thm circle bundle over KE manif ancient sol}
Let $a(\tau)$ and $b_i(\tau)$ be solutions of system
$($\ref{eq RF torus r=1 A1}$)$--$($\ref{eq RF torus r=1 A2}$)$.
Let $\Omega_+$ be the compact region defined in $($\ref{eq def Omega +}$)$. Then

\smallskip
\noindent  $($i$)$  for any initial data $a(0)>0$ and $(b_1(0), \cdots, b_m(0))$ which satisfy
$\left(\frac{a(0)}{b_1(0)},\cdots, \frac{a(0)}{b_m(0)} \right)  \in \Omega_+ \setminus \{0 \}$,
$a(\tau)$ and $b_i(\tau)$ exist for all $\tau$-time and satisfy $\left(\frac{a(\tau)}{b_1(\tau)},
\cdots, \frac{a(\tau)}{b_m( \tau)} \right)  \in \Omega_+ \setminus \{0 \}$ and  $\lim_{\tau \rightarrow \infty}
 \left(\frac{a(\tau)}{b_1(\tau)},\cdots, \frac{a(\tau)}{b_m( \tau)} \right)  =0$;

\noindent  $($ii$)$  There is only one solution in $($i$)$ whose corresponding $Y(u)$
 satisfies $\lim_{u \rightarrow -\infty} Y(u)$  $ =\xi$.
For this solution $lim_{u \rightarrow -\infty} \tau(u) \doteqdot -T_1$ is finite,
where $\tau(u)$ is  defined by $($\ref{u-defn}$)$.
\end{theorem}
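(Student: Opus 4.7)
The plan is to work with the $u$-time ODE \eqref{eq RF torus r=1 A2a} on $\Omega_+$ and only translate back to $\tau$ at the end. For (i), first I would show $\Omega_+$ is forward invariant under \eqref{eq RF torus r=1 A2a}: at a boundary point with $F_i(Y)=0$,
\[
\frac{dF_i}{du}\;=\;2\sum_{j=1}^m n_jq_j^2 Y_j^2\,F_j(Y)\;\geq\;0,
\]
so orbits cannot exit through $\{F_i=0\}$. Since $\frac{dY_i}{du}=-Y_iF_i(Y)\leq 0$ on $\Omega_+$, each $Y_i(u)$ is non-increasing, $Y(u)$ exists for all $u\geq 0$, and converges to some $Y^\infty\in\Omega_+$; by Lemma \ref{lem Y i fixed point}, $Y^\infty\in\{0,\xi\}$ (the other zeros $v_\theta$ lie in $\Omega_-\setminus\Omega_+$). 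Translating back to $\tau$, \eqref{eq RF torus r=1 A1}--\eqref{eq RF torus r=1 A2} give $\frac{da}{d\tau}=E(Y)\in[0,E(Y(0))]$ and $\frac{db_i}{d\tau}\in[\tfrac{2n_ip_i}{n_i+1},2p_i]$ on $\Omega_+$, so $a,b_i$ have no finite-$\tau$ blowup and $b_i$ stays positive; since $a$ grows at most linearly, $u(\tau)\to\infty$ as $\tau\to\infty$.

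To exclude $Y^\infty=\xi$ when $Y(0)\neq\xi$, componentwise monotonicity would force $Y(0)\geq\xi$, so it suffices to prove $\Omega_+\cap\{Y\geq\xi\}=\{\xi\}$. The set $\Omega_+$ is convex, because each $F_i$ is strictly concave (Hessian $-2q_i^2e_ie_i^T-2\operatorname{diag}(n_jq_j^2)$ is negative definite). I would then reduce to a first-order claim: for every $w\in\mathbb{R}^m_{\geq 0}\setminus\{0\}$, some $\nabla F_i(\xi)\cdot w<0$. Writing $\mathcal{L}_\xi=D_\xi\,DF(\xi)$ with $D_\xi=\operatorname{diag}(\xi_i)$ and letting $\tilde v>0$ be the positive left eigenvector of $\mathcal{L}_\xi$ for its unique negative eigenvalue $\lambda_{\min}$ (which exists by the Perron--Frobenius argument on $\alpha$ of Lemma \ref{lem newly minted Feb 20}(i)),
\[
\sum_i \tilde v_i\,\xi_i\,\nabla F_i(\xi)\cdot w\;=\;\tilde v^T\mathcal{L}_\xi w\;=\;\lambda_{\min}\,\tilde v^Tw\;<\;0
\]
gives the claim. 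Combining this with convexity of $\Omega_+$, a first-order expansion along the segment from $\xi$ to any supposed $Y^*\in\Omega_+\cap\{Y\geq\xi\}\setminus\{\xi\}$ produces a contradiction, completing (i).

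For (ii), Lemma \ref{lem newly minted Feb 20}(i) gives $\xi$ a one-dimensional unstable manifold $W^u$ for $\dot Y=-G(Y)$, tangent to the positive right eigenvector $v>0$ of $\mathcal{L}_\xi$ for $\lambda_{\min}$. The relation $\mathcal{L}_\xi v=\lambda_{\min}v$ gives $\nabla F_i(\xi)\cdot v=\lambda_{\min}v_i/\xi_i<0$ for every $i$, so $+v$ exits $\Omega_+$ at once while $-v$ lies strictly inside the tangent cone $\{w:\nabla F_i(\xi)\cdot w\geq 0\}$ to $\Omega_+$ at $\xi$. Exactly one branch of $W^u$ therefore stays in $\Omega_+$; this is the unique orbit in $\Omega_+\setminus\{\xi\}$ with $\alpha$-limit $\xi$, and by (i) it flows forward to $0$. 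Along this orbit $E(Y(u))\to E(\xi)>0$ as $u\to-\infty$, so by \eqref{formula-au} $a(u)\sim C\,e^{E(\xi)u}$ decays exponentially; since $d\tau=a\,du$, $\tau(u)=-\int_u^0 a(\zeta)\,d\zeta$ converges as $u\to-\infty$ to a finite limit $-T_1<0$.

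The hardest step will be the first-order tangent-cone analysis at $\xi$: both $\Omega_+\cap\{Y\geq\xi\}=\{\xi\}$ in (i) and the selection of the $-v$ branch in (ii) depend on the uniform sign $\nabla F_i(\xi)\cdot v<0$, which I obtain only from the Perron--Frobenius structure of $\mathcal{L}_\xi$ developed in the appendix.
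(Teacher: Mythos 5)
Your proof is correct, and for the most delicate step it takes a genuinely different route from the paper. For the forward invariance of $\Omega_+$ and the consequent long-time existence, your computation of $\left.\frac{dF_{i}}{du}\right|_{F_i=0} = 2\sum_j n_jq_j^2Y_j^2F_j(Y) \ge 0$ is exactly the paper's computation of $\nabla F_{i_0}\cdot(-G(Y))$ in a different guise. Where you diverge is in excluding $Y(u)\to\xi$: the paper runs a three-case analysis on the position of $Y(0)$ relative to $\xi$ and, in the main case, constructs by hand a supporting hyperplane at $\xi$ (introducing $a_i,b_i$ and the estimate $\sum_i b_i/(a_i+b_i)>2$, then solving $\sum_i\lambda_i\nabla F_i(\xi)=-\delta$ explicitly). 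You instead observe that componentwise monotonicity on $\Omega_+$ forces $Y(0)\ge\xi$, reduce to the clean geometric claim $\Omega_+\cap(\xi+\R^m_{\ge 0})=\{\xi\}$, and prove that claim spectrally: the positive \emph{left} Perron--Frobenius eigenvector $\tilde v$ of $\mathcal{L}_\xi=D_\xi\,DF(\xi)$ yields $\sum_i \tilde v_i\xi_i\,\nabla F_i(\xi)\cdot w=\lambda_{\min}\tilde v^Tw<0$ for $w\ge 0$, $w\ne 0$, and then concavity of each $F_i$ (Hessian $= -\mathrm{diag}(2n_jq_j^2)-2q_i^2e_ie_i^T\prec 0$) upgrades this first-order inequality to $F_i(Y^*)<0$ for any $Y^*\ge\xi$, $Y^*\ne\xi$. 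This collapses the paper's cases (ib) and (ic) into one argument and dispenses with the auxiliary estimate $\xi_i<p_i/((n_i+1)q_i^2)$. One small caveat: the paper's Lemma \ref{lem newly minted Feb 20}(i) only states the positivity of the \emph{right} eigenvector for the negative eigenvalue; you need to note (or supply) that $\alpha$, having all positive entries, also has a positive left Perron--Frobenius eigenvector (pass to $\alpha^T$), which then transfers to $\beta$ and $\mathcal{L}_\xi$. For (ii) your tangent-cone analysis at $\xi$ and the Hartman--Grobman selection of the $-v$ branch of the unstable manifold coincide with the paper (up to the sign of the eigenvector), while your argument that $T_1<\infty$ is carried out in $u$-time via the exponential decay $a(u)\asymp e^{E(\xi)u}$ from (\ref{formula-au}) rather than the paper's $\tau$-time estimate $da/d\tau>\tfrac14\sum n_iq_i^2\xi_i^2$; both are valid. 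Finally, a harmless point you and the paper both leave implicit: the conclusion $\lim Y(\tau)=0$ in part (i) presupposes $Y(0)\ne\xi$, i.e.\ that $Y(0)$ is not the Einstein fixed point.
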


\begin{proof}
By the discussion before Remark \ref{Einstein-point} we may work with the system
(\ref{eq RF torus r=1 A2a}) for $Y(u)$.

$($i$)$ By assumption we may assume that the point $Y(0) \in \Omega_+$ is not
a stationary point of the vector field of (\ref{eq RF torus r=1 A2a}).
To see that the solution $Y(u)$ stays in $\Omega_+$, fix an index $ 1 \leq i_0 \leq m$.
The gradient of the level hypersurface $F_{i_0}=0$ pointing into $\Omega_+$
is given by
\begin{align*}
\nabla F_{i_0} = & (-2n_1 q_1^2Y_1, -2n_2q_2^2Y_2, \cdots ,
-2n_{i_0-1}\,q_{i_0-1}^2Y_{i_0-1}, \\
& 2p_{i_0}-2(n_{i_0}+1)\,q_{i_0}^2Y_{i_0}, -2n_{i_0+1}\,q_{i_0+1}^2Y_{i_0+1}, \cdots, -2n_mq_m^2Y_m ).
\end{align*}
Taking the inner product of $\nabla F_{i_0}$  with vector field $-G(Y)$ defined by the $\operatorname{ODE}$
(\ref{eq RF torus r=1 A2a}),
we get
\begin{align}
 & 2n_1q_1^2 Y_1^2 \left (2p_1Y_1 -  q_1^2 Y_1^2 - E(Y) \right) +\cdots
+ 2n_mq_m^2 Y_m^2 \left ( 2p_mY_m - q_m^2 Y_m^2 -E(Y) \right) \notag \\
& + (2p_{i_0}-2q_{i_0}^2 Y_{i_0}) \left ( E(Y) +  q_{i_0}^2 Y_{i_0}^2 -2p_{i_0} Y_{i_0} \right) Y_{i_0}.
\label{eq quantity for system max prin ode}
\end{align}
Because $2p_iY_i - q_i^2 Y_i^2 -E(Y) \geq 0$ in $\Omega_+$ and $ E(Y) +  q_{i_0}^2 Y_{i_0}^2
-2p_{i_0} Y_{i_0} =0$ on the hypersurface $F_{i_0}(Y)=0$, the quantity in
(\ref{eq quantity for system max prin ode}) is nonnegative and it equals zero if and only if
$Y_1F_1(Y) = \cdots =Y_mF_m(Y)=0$.

Since $Y(u)$ stays in a compact set, by the extendibility theory of $\operatorname{ODE}$s
the solution $Y(u)$ exists for all $u \in [0,\infty)$.
By Lemma \ref{lem non ancient or ancient sol}$($iii$)$ and the fact that $\Omega_+$ intersects
any coordinate hyperplane only at the origin,
the corresponding solution of the Ricci flow is either ancient with
 $\lim_{u \rightarrow \infty} Y(u) = 0 $ or converges to $ \xi $.
Below we rule out the second possibility by considering three cases.

$($ia$)$ The first case is when there is an $i_0$ such that $Y_{i_0}(0) < \xi_{i_0}$.
From $\frac{dY_{i_0}}{du}(u) =-Y_{i_0}(u) F_{i_0}(Y(u)) \leq 0$ we get
$\lim_{u \rightarrow \infty} Y_{i_0}(u) < \xi_{i_0}$.
Hence in this case we have $\lim_{u \rightarrow \infty} Y(u) =0$.

$($ib$)$ Next we rule out the case in which  $Y(0) \in \Omega_+$ and
$Y_i(0) > \xi_i$ for each $i$.
 In this situation, we have
\begin{equation}
\xi_i < \frac{p_i}{(n_i +1) q_i^2}, \qquad \text{ for all }  i. \label{eq xi uppber bdd middle}
\end{equation}
To see this we compute
\begin{align*}
 0  &\leq F_i(Y(0)) -F_i(\xi) \\
 & =  (2p_i-(n_i+1)q_i^2(Y_i(0)+\xi_i)) \cdot (Y_i(0)-\xi_i)
 - \sum_{j \neq i} n_j q_j^2 (Y_j(0)-\xi_j)(Y_j(0) + \xi_j).
\end{align*}
Hence $2p_i-(n_i+1)q_i^2(Y_i(u)+\xi_i) \geq 0$ and (\ref{eq xi uppber bdd middle}) follows.

Let
\begin{equation}
a_i \doteqdot 2(p_i -(n_i+1)q_i^2 \xi_i)  > 0 \quad \text{and} \quad b_i \doteqdot
2n_i q_i^2 \xi_i > 0. \label{ai bi def}
\end{equation}
We compute
\begin{align*}
\sum_{i=1}^m \,\frac{b_i}{a_i +b_i} & = \sum_{i=1}^m \,\frac{2n_i q_i^2 \xi_i^2}{ 2(p_i \xi_i
-(n_i+1)q_i^2 \xi_i^2) +2n_i q_i^2 \xi_i^2 } \\
& = \sum_{i=1}^m \,\frac{ 2 n_i q_i^2 \xi_i^2}{ (\sum_{j=1}^m \,n_j q_j^2 \xi_j^2) -q_i^2 \xi_i^2},
\end{align*}
where we have used $F_i(\xi)=0$ to get the last equality.
Hence we get
\begin{equation}
\sum_{i=1}^m \,\frac{b_i}{a_i +b_i} >2.  \label{eq a and b frac 2}
\end{equation}
We may therefore write $\sum_{i=1}^m \,\frac{b_i}{a_i +b_i} =\frac{1}{1-\alpha}$
for some $\alpha \in (\frac{1}{2},1)$.

We now show that there is a convex combination of the gradients $\nabla F_i(\xi)$
which is of the form $- (\delta_1, \cdots, \delta_m)$ with each $\delta_i >0$, i.e.,
there is  a  solution $(\lambda_1, \cdots, \lambda_m)$ of
\begin{equation}
\sum_{i=1}^m \lambda_i \cdot \nabla F_i(\xi)  = -(\delta_1, \cdots, \delta_m)
\label{eq convex cone normal}
\end{equation}
which satisfies $\sum_{i=1}^m \lambda_i =1$ with each $\lambda_i >0$.
Note that the equation (\ref{eq convex cone normal}) can be written in components as
\[
a_i \lambda_i- b_i \sum_{j \neq i} \lambda_j =-\delta_i,
\]
 which is equivalent to
 \[
 (a_i +b_i) \lambda_i =b_i - \delta_i.
 \]
 It is now trivial  to verify that $\delta_i =\alpha b_i$ and $\lambda_i =(1-\alpha) \frac{b_i}{a_i +b_i}$
 are solutions which satisfy all the conditions. This gives (\ref{eq convex cone normal}).

Recall that the tangent cone of the convex set $\Omega_+$ at $\xi$ is given by
\[
  T_{\xi} \Omega_{+} = \{w \in \mathbb{R}^m:\, \nabla F_i(\xi) \cdot (w-\xi) \geq 0 \}.
\]
By (\ref{eq convex cone normal}) there is a supporting plane of this cone whose inward-pointing
normal takes the form $-(\delta_1, \cdots, \delta_m)$.
We obtain an immediate contradiction that $Y(0)$ would not lie in the tangent cone,
since we have $- (\delta_1, \cdots, \delta_m) \cdot (Y(0) -\xi) <0$.

$($ic$)$ Finally we consider the case in which there are $i_0$ and $j_0$ such that $Y_{i_0}(0) = \xi_{i_0}$,
$Y_{j_0}(0) > \xi_{j_0}$, and $Y_{i}(0) \geq \xi_{i}$ for all $i$.
It follows that $F_{i_0} (Y(0)) <F_{i_0}(\xi) =0$.
Then $\frac{dY_{i_0}}{du}(0) =-Y_{i_0}(0) F_{i_0}(Y(0)) >  0$
and hence there is a small $u_1>0$ such that $Y_i(u_1) > \xi_i$ for all $i$.
Treating $u_1$ as initial time, we see that this case is impossible by the conclusion of $($ib$)$.
This finishes the proof of $($i$)$.

$($ii$)$ By Lemma \ref{lem newly minted Feb 20}(i)
 we know that exact one eigenvalue of  $\mathcal{L}_{\xi}$,
denoted say by $\lambda_1$, is negative, and that the corresponding eigenvector $z$
can be assumed to have all negative entries. Hence
$( \xi_i  \left . \nabla F_i \right |_{\xi} ) \cdot z=\lambda_1 z_i >0$ for each $i$.
Thus $ \left . \nabla F_i \right |_{\xi} \cdot z>0$ and  $z$ points into
$\operatorname{int}(\Omega_+)$.
Applying the Hartman-Grobman theorem  we conclude that one side of the
one dimensional unstable manifold of vector field $-G(Y)$ at $\xi$ lies
in $\Int(\Omega_+)$ due to $($i$)$.
This proves first part of $($ii$)$.
Note that the longtime existence of $Y(u)$ with $u \rightarrow -\infty$
follows since the solution lies in a compact set.

To see $\lim_{u \rightarrow -\infty} \tau(u)$ is finite,
since $\lim_{u \rightarrow -\infty}Y(u) =\xi$,
we can choose $u_0<0$ small enough so that when $u < u_0$ we have
$|Y_i(u) -\xi_i| < \frac{1}{2} \xi_i$ for each $i$.
From (\ref{eq RF torus r=1 A1}) we conclude
 that $\frac{da}{d \tau} > \sum_{i=1}^m \frac{1}{4} n_i q_i^2 \xi_i^2$ when $\tau < \tau(u_0)$
 which, we recall from (\ref{u-defn}), satisfies $u_0 =\int_0^{\tau(u_0)}\, \frac{1}{a(\zeta)}d \zeta$.
 This implies that there is a finite $-T_1 < \tau(u_0)$ such that $\lim_{\tau \rightarrow - T_1^+} a(\tau) =0$
 and $a(\tau) \geq (\sum_{i=1}^m \frac{1}{4} n_i q_i^2 \xi_i^2) (\tau+T_1)$ for $\tau \in [-T_1,\tau(u_0)]$.
Then it follows from
\[
\frac{du}{d \tau}= \frac{1}{a(\tau)}  \leq \frac{1}{ \sum_{i=1}^m \frac{1}{4} n_i q_i^2 \xi_i^2}
 \cdot \frac{1}{\tau +T_1} \quad \text{ for } \tau \in (-T_1,\tau(u_0)]
\]
that $\lim_{u \rightarrow -\infty} \tau(u) =-T_1$.
\end{proof}

\begin{rmk} \label{rmk m=2 circl det neg mod}
By the Hartman-Grobman theorem and Theorem \ref {thm circle bundle over KE manif ancient sol}$($ii$)$,
the other half of  the unstable manifold of the vector field $-G(Y)$ at $\xi$  lies inside
 $\Int(\Omega_-)$. Furthermore, none of the eigenvectors of  $\mathcal{L}_{\xi}$ corresponding
 to positive eigenvalues lie in the interior of the tangent cones  $T_{\xi} \Omega_+$ or
 $T_{\xi} \Omega_-$.
\end{rmk}

In the next two remarks we digress to discuss the backwards Ricci flow in pseudo Riemannian geometry.
Readers interested only in the Riemannian Ricci flow can skip to Lemma \ref{lem limt to infty}.

\begin{rmk} \label{rk psudo Riem RF ancient}
Note that in the proof of Theorem \ref{thm circle bundle over KE manif ancient sol}
what is essential for all the arguments is that we have the same sign for the products $p_i Y_i$.
By the next remark and remarks at the bottom of \cite[p.465]{ONe66} it follows that
the formulas for backwards Ricci flow (\ref{eq RF torus r=1 A1}) and
(\ref{eq RF torus r=1 A2}) hold in the pseudo Riemannian case, i.e.,
where some of $b_i, \, i \in I \subset \{1, \cdots, m\}$ and possibly $a$, are negative.

There are two situations. For convenience let us assume (by a translation in $\tau$) that
our Riemannian ancient solution is parametrized by the interval $(0, \infty)$. In the first
case, we leave $a(\tau)$ positive,  and choose a subset $I \subset \{1, \cdots, m\}$ and
replace the corresponding $b_i$ by $-b_i$. To ensure that the flow equations (\ref{eq RF torus r=1 A1})
and (\ref{eq RF torus r=1 A2}) remain unchanged we must make the corresponding $p_i, i\in I$ negative,
i.e., replace each of the Fano KE base factors $(M_i^{n_i}, g_i), i\in I$,
by a KE manifold with negative scalar curvature of the same dimension. We then obtain
ancient flows for pseudo Riemannian metrics of signature type $(1+2\sum_{i\notin I} n_i, 2\sum_{i \in I} n_i)$.

In the second situation, we replace $a(\tau)$ by $-a(\tau)$. In order to leave
(\ref{eq RF torus r=1 A1}) unchanged, we need to replace $\tau$ by $-\tau$. This means
that the corresponding Ricci flow is defined on $(0, \infty)$, i.e., it is an immortal flow.
As before choose a subset I of $I \subset \{1, \cdots, m\}$. This time, for any $i \in I$ we replace
$b_i$ by $-b_i$ and leave the corresponding Fano KE factor in the base alone, while for any
$i \notin I$, we leave $b_i$ alone and replace the corresponding Fano KE base factor by a KE
manifold with negative scalar curvature of the same dimension. Together with the reflection
in $\tau$ we see that both flow equations remain unchanged. Therefore we obtain immortal
flows for pseudo Riemannian metrics of signature type   $(2\sum_{i\notin I} n_i, 1+ 2\sum_{i \in I} n_i)$.
In particular, for the choice $I = \{1, \cdots, m\}$, we have an immortal flow with Lorentzian signature.

Actually the above correspondence for the flows applies also to the Riemannian Einstein
metrics on torus bundles over products of Fano KE manifolds and pseudo Riemannian Einstein metrics on
torus bundles over products of KE manifolds of either positive or negative scalar curvature.
\end{rmk}

\begin{rmk}
For any local coordinates $\{ x^i\}$ on a pseudo Riemannian manifold $(W^n,h)$, (using the
the convention  $R(X,Y)Z =\nabla_{[X,Y]}Z -(\nabla_X \nabla_Y Z - \nabla_Y
\nabla_X Z)$), the curvature tensor  has the same expression
in terms of $h_{ij} =h(\frac{\partial}{\partial x^i}, \frac{\partial}{\partial x^j})$, $X^i, \, Y^j$, and $Z^k$
as in the Riemannian case. (In the above $X=X^i \frac{\partial}{\partial x^i}$, $Y=Y^j \frac{\partial}{\partial x^j}$,
and $Z=Z^k \frac{\partial}{\partial x^k}$.)  Hence
the formula for the Ricci tensor
\[
\Rc(X,Z) =\tr(Y \rightarrow R(X, Y)Z) =h^{ij} h(R(X, \frac{\partial}{\partial x^i})Z, \frac{\partial}{\partial x^j})
\]
also has the same expression as in Riemannian case.

Let $\pi: (M,g) \rightarrow (B,\check{g})$ be a smooth submersion between two pseudo Riemannian manifolds.
We assume that for any $ p \in M$ the vertical space $V_p \subset (T_pM,g_p)$ is non-degenerate
and that $d \pi:T_pM \rightarrow T_{\pi(p)}B$ is an isometry on the horizontal space $H_p \doteqdot
V_p^\perp$, then the calculations for the Ricci tensor in \cite{Bes87} from
Theorem 9.28 to Proposition 9.36 go through literally without any change (as was already implied
in \cite[p.465]{ONe66}).

However if we want to use orthonormal frames to calculate the Ricci tensor for
a pseudo Riemannian submersion, we need to define an \textbf{orthonormal} frame
to be a frame $\{e_i\}$ such that  $h(e_i,e_i) \doteqdot \epsilon_i =\pm 1$ and
$h(e_i,e_j) =0$ for $i \neq j$. In such a frame the trace of a linear map
$A: T_p W \rightarrow T_p W$ is given by
\[
\tr A =\sum_i \epsilon_i h(A(e_i),e_i).
\]
The formula for the Ricci tensor of a pseudo Riemannian submersion is then
given by the same expression as in the Riemannian case in terms of the $h_{ij}$.
\end{rmk}

The technique in the proof of Theorem \ref{thm circle bundle over KE manif ancient sol}
can be used to improve  Lemma \ref{lem non ancient or ancient sol}$($ii$)$.

\begin{lemma} \label{lem limt to infty}
Let $a(\tau)$ and $b_i(\tau)$ be a solution of the system
$($\ref{eq RF torus r=1 A1}$)$--$($\ref{eq RF torus r=1 A2}$)$.
Suppose the initial data $a(0)$ and $b_i(0)$ satisfy
$Y(0) = \left(\frac{a(0)}{b_1(0)},\cdots, \frac{a(0)}{b_m(0)} \right)  \in \Omega_- \setminus
 ( \{ \xi \} \cup \{ v_\theta, \theta \in \Theta \})$.
Then   the corresponding solution $g_{a,\vec{b}}(\tau)$
terminates in  finite $\tau$-time and  $\left(\frac{a(\tau)}{b_1(\tau)},
\cdots, \frac{a(\tau)}{b_m( \tau)} \right)$ remains in $ \Omega_- \setminus
 ( \{ \xi \} \cup \{ v_\theta, \theta \in \Theta \})$.
Let $u_*$ denote the maximal $u$-time of existence of the corresponding $Y(u)$.
Then $u_{*}$ is finite and   $Y(u)$ approaches  $\infty$ as $u \nearrow u_*^-.$
\end{lemma}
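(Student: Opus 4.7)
My plan is to (1) show $\Omega_-$ is forward invariant under (\ref{eq RF torus r=1 A2a}), which in turn forces each $Y_i(u)$ to be monotone non-decreasing; (2) rule out that $Y(u)$ stays bounded on $[0,u_*)$; and (3) convert the $u$-blowup into termination of $g_{a,\vec{b}}(\tau)$ in finite $\tau$-time. The main obstacle is the subcase of (2) in which the limit point would be the Einstein point $\xi$.

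For (1): on the boundary portion $\{F_{i_0}(Y)=0\}\cap\Omega_-$, a direct computation gives
\[
\frac{dF_{i_0}}{du} = -\sum_{k=1}^m \partial_k F_{i_0}(Y)\, Y_k F_k(Y) = 2\sum_{k\neq i_0} n_k q_k^2 Y_k^2\, F_k(Y) \leq 0,
\]
since $\partial_k F_{i_0} = -2n_k q_k^2 Y_k$ for $k\neq i_0$ and every $F_k(Y)\leq 0$ in $\Omega_-$. This is exactly the dual computation to the one in the proof of Theorem \ref{thm circle bundle over KE manif ancient sol}(i), and it makes $\Omega_-$ forward invariant. Combined with Lemma \ref{lem non ancient or ancient sol}(i), the positivity of $Y(0)$ (forced by $a(0),b_i(0)>0$) keeps $Y(u)\in\mathbb{R}_{>0}^m$ throughout, and $\frac{dY_i}{du}=-Y_iF_i(Y)\geq 0$ shows each $Y_i$ is non-decreasing on $[0,u_*)$.

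For (2): if $Y(u)$ were bounded on $[0,u_*)$, standard extendibility forces $u_*=\infty$, and monotone boundedness implies $Y(u)\to Y^*$ for some fixed point. By Lemma \ref{lem Y i fixed point}, $Y^*\in\{0,\xi\}\cup\{v_\theta\}$; monotonicity gives $Y^*\geq Y(0)>0$ componentwise, which rules out $0$ and every $v_\theta$ (all of which have a vanishing coordinate), leaving only $Y^*=\xi$. I rule this out using Lemma \ref{lem newly minted Feb 20}(i) (diagonalizability of $\mathcal{L}_\xi$ with one negative and $m-1$ positive eigenvalues) together with the stable manifold theorem at $\xi$: the normalized direction $(Y(u)-\xi)/|Y(u)-\xi|$ must converge to a nonzero $v$ satisfying $\mathcal{L}_\xi v = \lambda v$ for some $\lambda>0$. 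The crucial claim is that every such eigenvector has entries of strictly mixed sign. Since $(\mathcal{L}_\xi)_{ij}=\xi_i\,\partial_j F_i(\xi)$, the eigenvalue equation rewrites as $\nabla F_i(\xi)\cdot v = (\lambda/\xi_i)\,v_i$ for each $i$. If $v\leq 0$ with $v<0$ strictly, then $\nabla F_i(\xi)\cdot v<0$ for every $i$, placing $v\in\Int(T_\xi\Omega_-)$ and contradicting Remark \ref{rmk m=2 circl det neg mod}. If instead $v\leq 0$ with some $v_{j_0}=0$, reading the $j_0$-th row of $\mathcal{L}_\xi v=\lambda v$ (where the diagonal term vanishes) and using $\partial_k F_{j_0}(\xi)=-2n_k q_k^2 \xi_k$ for $k\neq j_0$ collapses everything to $\sum_{k\neq j_0} n_k q_k^2 \xi_k v_k=0$, which with $v_k\leq 0$ forces every $v_k=0$, contradicting $v\neq 0$. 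The case $v\geq 0$ is symmetric. Hence $v$ has both strictly positive and strictly negative entries; writing $Y(u)-\xi\sim c(u)\,v$ as $u\to\infty$, the constraint $F_i(Y)\leq 0$ imposes $c(u)\,v_i\leq 0$ for every $i$, which is impossible for any $c(u)\neq 0$.

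For (3): since $Y(u)$ must escape every compact subset of $\mathbb{R}_{>0}^m$, at some time $u_1\in[0,u_*)$ one of the coordinates satisfies $Y_i(u_1)>\tfrac{2p_i}{(n_i+1)q_i^2}$, so Lemma \ref{lem non ancient or ancient sol}(ii) gives $u_*<\infty$ and $Y(u)\to\infty$ as $u\nearrow u_*^-$. Because $Y_i=a/b_i$, the divergence of some $Y_i$ forces the corresponding $b_i(\tau)$ to $0$ in finite $\tau$-time, so $g_{a,\vec{b}}(\tau)$ terminates in finite $\tau$-time as asserted. The technical core is the mixed-sign step: it is what prevents a trajectory in $\Omega_-$ from sneaking down the $(m-1)$-dimensional stable manifold of $\xi$, and it relies essentially on Remark \ref{rmk m=2 circl det neg mod} together with the simple factorization $\mathcal{L}_\xi = D_\xi M$ of the linearization.
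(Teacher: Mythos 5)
Your overall skeleton matches the paper's exactly: forward invariance of $\Omega_-$, monotone boundedness forcing convergence to $\xi$ (the other fixed points being ruled out by positivity), ruling out $\xi$, and then conversion via Lemma \ref{lem non ancient or ancient sol}(ii). Your computation in (1) is correct and is what the paper waves at with ``an argument similar to that in the proof of Theorem \ref{thm circle bundle over KE manif ancient sol}(i).'' Where you genuinely diverge is the heart of the proof, ruling out $Y(u)\to\xi$. The paper does this by convex geometry: it splits into three cases based on the position of $Y(0)$ relative to $\xi$, and in the main case constructs an explicit separating hyperplane for the tangent cone $T_\xi\Omega_-$ from the inequality $\sum_i b_i/(a_i+b_i) > 2$ (equations (\ref{ai bi def})--(\ref{eq convex cone normal})), then derives a contradiction from the limiting direction lying in $T_\xi\Omega_-$ (inequalities (\ref{eq tem we get it to win 1})--(\ref{eq tem we get it to win 2})). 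You instead go through the linearization $\mathcal{L}_\xi$, the factorization $\mathcal{L}_\xi = D_\xi M$ with $M_{ij}=\partial_j F_i(\xi)$, and the mixed-sign structure of stable eigenvectors --- a Perron--Frobenius flavored argument that is more conceptual and avoids the case split. Each has merits: the paper's argument is entirely elementary and self-contained; yours explains \emph{why} the obstruction occurs (the stable eigenspace cannot meet the closed negative orthant) and reuses the spectral work already done in Lemma \ref{lem newly minted Feb 20} and Lemma \ref{lem eigenvalue ai matrix and diag}.

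Two points you should tighten. First, the step ``the normalized direction $(Y(u)-\xi)/|Y(u)-\xi|$ must converge to a nonzero eigenvector of $\mathcal{L}_\xi$ with positive eigenvalue'' is stronger than what the Hartman--Grobman theorem or the $C^1$ stable manifold theorem gives you directly --- those give $C^0$ conjugacy and $C^1$ tangency of $W^s$ to $E^s$, from which you get only that accumulation points of the normalized direction lie in the stable eigenspace $E^s$, not that they are eigenvectors. This weaker statement is enough for your argument, but then you need the mixed-sign claim for \emph{all} nonzero $v \in E^s$, not just eigenvectors; your identity $\nabla F_i(\xi)\cdot v=(\lambda/\xi_i)v_i$ only holds for eigenvectors. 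The cleanest patch is via a left Perron--Frobenius eigenvector: the eigenspace $E^s$ of $\mathcal{L}_\xi$ (equivalently of $\beta=D_\xi\alpha D_\xi^{-1}$) is characterized by $\sum_i (\ell_i/\xi_i)v_i=0$ where $\ell>0$ is the positive left Perron eigenvector of $\alpha$, and positivity of $\ell_i/\xi_i$ immediately forces any $v\in E^s$ with $v\leq 0$ (or $v\geq 0$) to vanish, which combined with the monotonicity constraint $Y_i(u)\leq\xi_i$ closes the argument. Second, the case ``$v<0$ strictly'' leans on Remark \ref{rmk m=2 circl det neg mod}, which the paper states without proof and whose second assertion in turn flows from the very convex-geometry argument you are replacing; to keep your route independent, you can instead dispatch this case by the Perron--Frobenius uniqueness of the strictly positive eigenvector (cited in the proof of Lemma \ref{lem eigenvalue ai matrix and diag} from [Gan59]), since $-v>0$ would then be a second positive eigenvector with a different eigenvalue.
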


\begin{proof}
Assuming  $Y(0) \in \Omega_- \setminus ( \{ \xi \} \cup \{ v_\theta,
\theta \in \Theta \})$, by an argument similar to that in the proof of
Theorem \ref{thm circle bundle over KE manif ancient sol}(i) one shows that at a
boundary point of $\partial \Omega_-$ where $F_{i_0}=0$ for some $i_0$,
the vector field  $-G(Y)$ points into $\Omega_-$.
Hence the solution $Y(u)$ stays in $\Omega_-$ as long as it exists.
Note that $Y_i(u) \neq 0$ for all $i$ and $u < u_*$.

Next we will prove that the solution $Y(u)$  approaches $\infty$ as
$u \rightarrow u_*^-$. If this does not happen, then  since $\frac{dY_i}{du} \geq 0$
for each $i$, it follows that $Y(u)$ must converge to a finite point as  $u \rightarrow u_*^-$.
Since $Y_i(0)>0$ and $Y(u)$ satisfies (\ref{eq RF torus r=1 A2a}),
the limit of $Y(u)$ must be $\xi$. There are now three cases to consider.

\smallskip

 (a) There is an $i_0$ such that $Y_{i_0}(0) > \xi_{i_0}$.
 Then $Y_{i_0}(u)$ cannot approach $\xi_{i_0}$ as $\frac{dY_{i_0}}{du} \geq 0$.
 This is a contradiction.

\smallskip

 (b) $Y_i(0) < \xi_i$ for all $i$.  In this case, since $0 \geq F_i(Y(u)) -F_i(\xi)$,
a computation similar to that in the proof of (\ref{eq xi uppber bdd middle}) gives
 $2p_i-(n_i+1)q_i^2(Y_i(u)+\xi_i) \geq 0$. Letting $u \rightarrow u_*^-$, we get
\begin{equation*}
\xi_i \leq \frac{p_i}{(n_i +1) q_i^2}, \qquad \text{ for all }  i.
\end{equation*}

Let $a_i$ and $b_i$ be defined as in (\ref{ai bi def}).  Calculating as
in (\ref{eq a and b frac 2}) we have $ \sum_{i=1}^m \,\frac{b_i}{a_i +b_i} >2$.
We may therefore write $\sum_{i=1}^m \,\frac{b_i}{a_i +b_i} =\frac{1}{1-\alpha}$
for some $\alpha \in (\frac{1}{2},1)$. Hence in this case
(\ref{eq convex cone normal}) holds again.

Let $\delta_{\min} \doteqdot \min \{ \delta_1, \cdots, \delta_m \} >0$. Then for any unit vector
$(w_1, \cdots, w_m)$ with each $w_i <0$ we have
\[
 (w_1, \cdots, w_m) \cdot (-(\delta_1, \cdots, \delta_m)) \geq \delta_{\min} \sum_{i=1}^m (-w_i)
\geq   \delta_{\min} \sum_{i=1}^m w_i^2 = \delta_{\min}.
\]
In particular taking $ (w_1, \cdots, w_m) =  \frac{Y(u) - \xi}{\| Y(u)-\xi\|}$  we have
\begin{equation}
 \frac{Y(u) - \xi}{\| Y(u)-\xi\|} \cdot (\delta_1, \cdots, \delta_m) \leq -\delta_{\min} \quad \text{ for }
 u \in [0, u_*).
   \label{eq tem we get it to win 1}
\end{equation}

By (\ref{eq convex cone normal}), for any unit vector $(w_1, \cdots, w_m)$ in the tangent
 cone $T_{\xi}\Omega_-$, we have
\[
(w_1, \cdots, w_m) \cdot (-(\delta_1, \cdots, \delta_m)) \leq  0.
\]
Since $Y(u) \in \Omega_-$ approaches $\xi$ as $u \rightarrow u_*^-$, the distance
between the unit vector $\frac{Y(u) - \xi}{\| Y(u)-\xi\|}$ and $T_{\xi}\Omega_-$
approaches zero. Hence for $u$ sufficiently close to $u_*^-$  we have
\begin{equation}
\frac{Y(u) - \xi}{\| Y(u)-\xi\|} \cdot (- (\delta_1, \cdots, \delta_m)) \leq \frac{1}{2} \delta_{\min}.
  \label{eq tem we get it to win 2}
\end{equation}
This contradicts (\ref{eq tem we get it to win 1}), hence case (b) is impossible.

\smallskip

(c) There are $i_0$ and $j_0$ such that $ Y_{i_0}(0) = \xi_{i_0}$, $Y_{j_0}(0) < \xi_{j_0}$,
and $Y_i(0) \leq \xi_i$ for all other $i$.
It follows that $F_{i_0} (Y(0)) > F_{i_0}(\xi) = 0$.
Then $\frac{dY_{i_0}}{du}(0) =-Y_{i_0}(0) F_{i_0}(Y(0)) <  0$
and hence there is a small $u_1>0$ such that $Y_i(u_1) < \xi_i$ for all $i$.
Treating $u_1$ as the initial time, we see that this case is impossible by the
conclusion of (b).

\smallskip

Now we have proved $\lim_{u \rightarrow u_*^-} Y(u) =\infty$.
 By Lemma \ref{lem non ancient or ancient sol}$($ii$)$ we conclude that the
corresponding Ricci flow solution is not ancient.
\end{proof}

Next we analyse the geometric behavior as $\tau$ tends to $-T_1$ for the special
solution given in Theorem \ref{thm circle bundle over KE manif ancient sol}$($ii$)$.

\begin{theorem}\label{lem growth a bi}
Let $g_{a, \vec{b}}(\tau), \tau \in (-T_1, \infty)$, be the ancient solution of the Ricci flow
 in Theorem \ref{thm circle bundle over KE manif ancient sol}$($ii$)$.

\smallskip
\noindent $($i$)$ We have
\[
\lim_{\tau \rightarrow - T_1^+} a(\tau) =0, \quad \lim_{\tau \rightarrow - T_1^+} b_i(\tau) =0,
\quad \lim_{\tau \rightarrow - T_1^+} \frac{a(\tau)}{b_i(\tau)} =\xi_i, \quad
\lim_{\tau \rightarrow -T_1^+} \frac{a(\tau)}{T_1 +\tau} = E(\xi).
\]
Geometrically, as $\tau \rightarrow - T_1^+$, the  circle bundles $P_Q$, equipped with the
metric $g_{a, \vec{b}} (\tau)$, collapses to a point in the Gromov-Hausdorff topology.

\noindent $($ii$)$ The solution $g_{a, \vec{b}} (\tau)$ is of type I
as $\tau \rightarrow -T_1^+$, i.e., there is a positive constant $C<\infty$ such that
for $\tau \in (-T_1, 0]$ we have
\[
(T_1+ \tau) \cdot \sup_{x \in P_Q}|\operatorname{Rm}_{g_{a, \vec{b}} (\tau)} (x) |_{g_{a,
 \vec{b}} (\tau)} \leq C.
\]

\noindent $($iii$)$  As $\tau \rightarrow -T_1^+$ the rescaled metric $ (T_1+ \tau)^{-1} g_{a, \vec{b}}
(\tau)$ on $P_Q$ converges to the Einstein metric $E(\xi) g_{1,(\xi_1^{-1}, \cdots, \xi_m^{-1})} $,
 where $g_{1,(\xi_1^{-1}, \cdots, \xi_m^{-1})} = \sigma( \cdot) \otimes \sigma (\cdot)
+ \sum_i \xi_i^{-1} \, g_i$. The metric $g_{1,(\xi_1^{-1}, \cdots, \xi_m^{-1})}$
has Ricci tensor $\operatorname{Rc} =\frac{E(\xi)}{2} \cdot g_{1,(\xi_1^{-1}, \cdots, \xi_m^{-1})}$
and  scalar curvature $R =2 \sum_{i=1}^m ( n_ip_i \xi_i ) -\frac{1}{2} E(\xi) $.
\end{theorem}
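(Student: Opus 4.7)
The plan is to leverage the machinery already in place. From Theorem \ref{thm circle bundle over KE manif ancient sol}(ii) we know $\lim_{u \to -\infty} Y(u) = \xi$ and $\lim_{u \to -\infty} \tau(u) = -T_1$ is finite. Since $Y_i = a/b_i$, the limit $\lim_{\tau \to -T_1^+} a(\tau)/b_i(\tau) = \xi_i$ is immediate. To establish $\lim_{\tau \to -T_1^+} a(\tau) = 0$ I would argue by contradiction: if $\lim_{\tau \to -T_1^+} a(\tau) > 0$, then because $du/d\tau = 1/a(\tau)$, the function $u(\tau)$ would stay bounded as $\tau \to -T_1^+$, contradicting $u \to -\infty$. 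Hence $a \to 0$, and then $b_i = a/Y_i \to 0/\xi_i = 0$. For the rate, since $da/d\tau = E(Y(\tau)) \to E(\xi)$ as $\tau \to -T_1^+$ and $a(-T_1^+) = 0$, L'H\^opital (or direct integration of the near-constant derivative) gives $\lim_{\tau \to -T_1^+} a(\tau)/(T_1 + \tau) = E(\xi)$. The collapse to a point in the Gromov--Hausdorff sense then follows from the fact that both $a(\tau)$ and each $b_i(\tau)$ vanish, so the diameter of $(P_Q, g_{a, \vec{b}}(\tau))$ tends to zero.

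For (ii), I would reuse the curvature component estimates from the proof of Theorem \ref{prop asym beha at infty}(i) verbatim, which bound the nonzero components of $\operatorname{Rm}_{g_{a,\vec{b}}}$ in terms of $a/(b_i b_j)$, $a/b_k^2$, and $1/b_i$. From part (i), we additionally have $b_i(\tau) = a(\tau)/Y_i(\tau) \sim E(\xi)\,\xi_i^{-1}\,(T_1+\tau)$ as $\tau \to -T_1^+$. Substituting these asymptotic rates gives
\begin{equation*}
\frac{a}{b_ib_j} \sim \frac{C_{ij}}{T_1+\tau}, \quad \frac{a}{b_k^2} \sim \frac{C_k}{T_1+\tau}, \quad \frac{1}{b_i} \sim \frac{D_i}{T_1+\tau},
\end{equation*}
so that $(T_1+\tau) \cdot |\operatorname{Rm}_{g_{a,\vec{b}}(\tau)}|_{g_{a,\vec{b}}(\tau)}$ is uniformly bounded, which is the type I condition.

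For (iii), observing that the coefficient of $\sigma \otimes \sigma$ in $(T_1+\tau)^{-1} g_{a,\vec{b}}(\tau)$ is $a(\tau)/(T_1+\tau) \to E(\xi)$ and the coefficient of $g_i$ is $b_i(\tau)/(T_1+\tau) \to E(\xi)\xi_i^{-1}$, the rescaled metric converges pointwise (hence uniformly on the compact $P_Q$) to $E(\xi)\bigl(\sigma \otimes \sigma + \sum_i \xi_i^{-1} g_i\bigr) = E(\xi)\, g_{1,(\xi_1^{-1}, \ldots, \xi_m^{-1})}$. To verify that this limit metric is Einstein, I would substitute $a = E(\xi)$ and $b_i = E(\xi)\xi_i^{-1}$ into formula (\ref{eq Ricci curv circle bundle}). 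The fibre coefficient becomes $\tfrac{1}{2}\sum_i n_iq_i^2\xi_i^2 = \tfrac{1}{2}E(\xi)$, equal to $\tfrac{1}{2}\cdot a$; the base coefficient on the $g_i$-part is $p_i - \tfrac{1}{2}q_i^2\xi_i$, which by the relation $F_i(\xi)=0$, i.e.\ $2p_i\xi_i - q_i^2\xi_i^2 = E(\xi)$, equals $E(\xi)/(2\xi_i) = \tfrac{1}{2} b_i \cdot \xi_i \cdot \xi_i^{-1} = \tfrac{1}{2} b_i Y_i$, so Ric $= \tfrac{E(\xi)}{2}\, g_{1,(\xi_1^{-1},\ldots,\xi_m^{-1})}$. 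The scalar curvature formula then follows by taking the trace and rewriting via $F_i(\xi)=0$.

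The plan is essentially routine given the prior work; the main bookkeeping obstacle is to make sure the rate $a \sim E(\xi)(T_1+\tau)$ is genuinely two-sided (not merely a lower bound) so that $b_i \sim E(\xi)\xi_i^{-1}(T_1+\tau)$ matches the inverse relationship $b_i = a/Y_i$ with $Y_i \to \xi_i$; this is handled by noting that $da/d\tau = E(Y)$ depends continuously on $Y(\tau)$ and $Y(\tau) \to \xi$, giving two-sided control of $da/d\tau$ near $-T_1$ and hence the sharp asymptotic for $a(\tau)$ via the mean value theorem.
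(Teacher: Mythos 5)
Your proposal is correct and follows essentially the same route as the paper. The paper cites the proof of Theorem \ref{thm circle bundle over KE manif ancient sol}(ii) (where $\lim_{\tau\to -T_1^+} a(\tau)=0$ is established directly by bounding $da/d\tau$ below away from zero near $\tau=-T_1$), uses L'H\^opital's rule for the rate $a(\tau)/(T_1+\tau)\to E(\xi)$ exactly as you do, derives $b_i/(T_1+\tau)\to \xi_i^{-1}E(\xi)$ from the ratio, and appeals to the proof of Theorem \ref{prop asym beha at infty}(i) for the type I curvature bound. Your alternative contradiction argument for $a\to 0$ (if $a$ stays bounded below then $u(\tau)=\int_0^\tau a^{-1}\,d\zeta$ would remain bounded as $\tau\to -T_1^+$, contradicting $u\to-\infty$) is a valid shortcut --- it implicitly uses the monotonicity of $a$ in $\tau$ from (\ref{eq RF torus r=1 A1}), which ensures $a$ has a limit --- and the direct verification that the limit in (iii) is Einstein via (\ref{eq Ricci curv circle bundle}) and the relation $F_i(\xi)=0$ is exactly what the paper indicates when it points to Remark \ref{Remk scalr curv form}(i). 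No gaps.
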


\begin{proof}
 (i) The first limit was shown in the proof of Theorem \ref{thm circle bundle over KE manif ancient sol}$($ii$)$.
 The second and third limits in (i) now follow from  $\lim_{u \rightarrow -\infty} Y_i(u) =\xi_i$.
By L'Hopital's rule we have
\[
\lim_{\tau \rightarrow -T_1^+} \frac{a(\tau)}{T_1 +\tau} =\lim_{\tau \rightarrow -T_1^+} \frac{d a(\tau)}{d \tau}
= \lim_{\tau \rightarrow -T_1^+} \sum_{i=1}^m n_i q_i^2 Y_i(u(\tau))^2 = E(\xi).
\]
This proves the last equality in (i).

(ii) By (i) we have
\begin{equation} \label{eq a, bi conv T1}
\lim_{\tau \rightarrow -T_1^+} \frac{b_i(\tau)}{T_1 +\tau} =
\lim_{\tau \rightarrow - T_1^+} \left(\frac{b_i(\tau)}{a(\tau)} \cdot \frac{a(\tau)}{T_1 +\tau} \right)
=\xi_i^{-1} E(\xi).
\end{equation}
From this and the proof of Theorem \ref{prop asym beha at infty}(i), we get the desired type I curvature
estimate as $\tau \rightarrow -T_1^+$.

(iii) The convergence of the rescaled metric follows from (\ref{eq a, bi conv T1}).
That the limit is an Einstein metric  can be deduced either directly from Ricci curvature
formula in Remark \ref{Remk scalr curv form}(i) or by appealing to \cite[(1.5),(1.6)]{WZ90}.
The scalar curvature formula follows from the formula in Remark \ref{Remk scalr curv form}(i).
\end{proof}

\begin{rmk}\label{remark top type cir}
 The topological properties of the  circle bundles $P_Q$ we are considering were
first investigated in \cite{WZ90}. For dimension $7$ a more detailed study of the
topological versus differential topological properties was then made by Kreck and
Stolz \cite{KS88} at the suggestion of Wang and Ziller. For the convenience of readers
not familiar with these works, we summarize below some conclusions which are relevant
to the present article.

(i) The manifolds $P_Q$ are simply connected if the $q_i$ are pairwise relatively prime.
Let us assume this holds in all of the following. Then for each fixed odd dimension
$\geq 7$, there are (countably) infinitely many homotopy types among the $P_Q$
 as well as infinitely many homeomorphic types within a fixed integral cohomology ring
 structure. Such diverse topological behaviour already occurs when the base consists of
 two KE Fano factors which are complex projective spaces of complex dimension $> 1$.
Together with the existence results in this section, we obtain type I ancient solutions
of the Ricci flow with positive Ricci curvature on all these topologically diverse
simply connected manifolds.

(ii) In the case of dimension $7$, the work of Kreck and Stolz shows that there is a circle
bundle $P_Q$ over $\C\PP^1 \times \C\PP^2$ such that its connected sum with any of the
27 exotic $7$-spheres is again a circle bundle of the form $P_{\tilde Q}$ over
$\C\PP^1 \times \C\PP^2$. Furthermore, these bundles are pairwise homeomorphic but
not diffeomorphic. Our existence theorem gives type I ancient flows on all these
manifolds. We certainly believe that this type of phenomenon holds in higher odd dimensions
as well, but at the moment the differential classification in this generality appears
to be out of reach.

(iii) Note that the rescaled ancient solutions $g_{a, \vec{b}}(\tau)/(\tau+ T_1)$
described in Theorems \ref{thm circle bundle over KE manif ancient sol}$($ii$)$
and \ref{lem growth a bi} connect the homothety classes of Einstein metrics on $P_Q$
corresponding to $\xi$ (at $\tau = -T_1$) to the homothety class of the product Einstein
metric (at $\tau=\infty$) on the lower-dimensional manifold $M_1 \times \cdots \times M_m$.

Now for suitable choices of base manifolds and $Q$ the manifolds $P_Q$ actually become
diffeomorphic (see Proposition 2.3 in \cite{WZ90}). The simplest and
lowest dimension case occurs when $M = \C\PP^1 \times \C\PP^1 = S^2 \times S^2$.
As long as the $q_i$ are nonzero and pairwise relatively prime, the $P_Q$ are all
diffeomorphic to $S^2 \times S^3$. However, the  Einstein metrics corresponding to
$\xi_Q$ actually belong to different path components of the moduli space of
Einstein metrics on $S^2 \times S^3$ (since the Einstein constants tend to zero
if the volumes are fixed). In such situations, our ancient solutions provide
a path in the space of metrics which satisfies a geometric PDE linking these
non-isometric Einstein metrics with an Einstein metric on the base space.

(iv) Note also that the Einstein metrics on those circle bundles $P_Q$ for which
the ratios $p_i/(-q_i)$ are positive and independent of $i$ are actually Sasakian-Einstein.
(Indeed, the corresponding complex line bundles admit a complete asymptotically conical
Calabi-Yau metric \cite{WW98}.) Therefore, in this special situation one obtains examples
of $\kappa$-non-collapsed ancient solutions whose backwards limits are Sasakian-Einstein.
\end{rmk}

%%%%%%%%%%%%%%%%%%%%%%%%%%%%%%%%%%%%%%%%%%
%%%%%%%%%%%%%%%%%%%%%%%%%%%%%%%%%%%%%%%%%%
\subsection{$\kappa$-noncollapsed ancient solutions on  circle bundles when $m=2$ and $3$}
\label{subsec noncollapsing anc sol m2}

In this subsection we consider the ancient solutions from Theorem \ref{thm flow near origin}(ii)
for the special cases when $m=2$ and $m=3$. We will show that
$\lim_{u \rightarrow -\infty} Y(u) =v_{\theta}$ and discuss their geometric properties
as $u \rightarrow -\infty$.

First we consider the $m=2$ case with initial condition $Y(0) \in \Omega_k$ for $k=1,2$.
Then equation (\ref{eq RF torus r=1 A2a}) becomes

\begin{equation}
\frac{dY_1}{du} =-Y_1 F_1(Y), \qquad \frac{dY_2}{du} =-Y_2 F_2(Y), \label{eq evol Y m=2}
\end{equation}
where
\begin{align*}
& F_i(Y) =2p_iY_i -q_i^2Y_i^2 -E(Y), \quad i=1,2, \,\,{\rm and} \\
& E(Y) =n_1 q_1^2Y_1^2 +  n_2q_2^2Y_2^2.
\end{align*}
Note that $\Omega_k$ contains three zeros of vector field $(-Y_1F_1(Y),-Y_2 F_2(Y))$: 0, $\xi$
and $v_k$, where
$v_1 = (\frac{2p_1}{(n_1+1)q_1^2},0)$ and $v_2 =(0, \frac{2p_2}{(n_2+1)q_2^2})$.

By the linear analysis at $\xi$ given in Lemma \ref{lem newly minted Feb 20} and
Remark  \ref{rmk m=2 circl det neg mod}, we conclude that the two branches of the (local)
stable curve of $(-Y_1F_1(Y),-Y_2 F_2(Y))$ at $\xi$ lie respectively inside $\operatorname{Int}(\Omega_1)$
and $\operatorname{Int}(\Omega_2)$ and we will denote them by $\gamma_1(u)$ and $\gamma_2(u)$.
We assume that $\gamma_k(0)$ is close to $\xi$ and extend  $\gamma_k(u)$ backwards to its maximal
time of existence.

\begin{proposition} \label{prop m=2 Omega 1 2 gamma}
\noindent $($i$)$  $\lim_{u \rightarrow \infty}
\gamma_k(u) = \xi$, $ k=1,2$.

\noindent $($ii$)$ $\gamma_k(u)$ exists on $(-\infty, \infty)$ and $\lim_{u \rightarrow -\infty}
 \gamma_k(u) = v_k$, $ k=1,2$.
\end{proposition}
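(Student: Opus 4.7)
The plan is to deduce (i) directly from the construction of $\gamma_k$ as a branch of the local stable curve at $\xi$, and to prove (ii) via a backward-invariance argument combined with a monotone-convergence step that identifies the $\alpha$-limit. Throughout, I focus on $k=1$; the case $k=2$ is symmetric under swapping of the two indices. Part (i) is built into the construction: by Lemma \ref{lem newly minted Feb 20}(i), $\xi$ is a saddle with a one-dimensional stable direction, and by Remark \ref{rmk m=2 circl det neg mod} each branch of that direction enters $\operatorname{Int}(\Omega_k)$, so by Hartman-Grobman the local stable curve $\gamma_k$ satisfies $\lim_{u\to\infty}\gamma_k(u)=\xi$.

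For (ii), the core assertion is that $\gamma_1(u)\in\operatorname{Int}(\Omega_1)$ for every $u\le 0$ at which it is defined. The coordinate axes $\{Y_i=0\}$ are preserved by uniqueness and so cannot be crossed. On the remaining two boundary pieces $\{F_1=0,\,F_2<0\}$ and $\{F_2=0,\,F_1>0\}$, a short computation gives the inner products
\[
(-G)\cdot(-\nabla F_1)=-2n_2q_2^2\,Y_2^2\,F_2, \qquad (-G)\cdot\nabla F_2 = 2n_1q_1^2\,Y_1^2\,F_1,
\]
with $-\nabla F_1$ and $+\nabla F_2$ being the respective outward normals from $\Omega_1$. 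Both quantities are strictly positive, so the forward flow exits $\Omega_1$ transversally across these pieces; reversing time, a trajectory sitting in $\operatorname{Int}(\Omega_1)$ cannot touch them at any earlier time. Since $\Omega_1$ is compact (the condition $F_1\ge 0$ forces $Y_1\le 2p_1/((n_1+1)q_1^2)$, which bounds $E(Y)$ and hence $Y_2$), the invariance yields that $\gamma_1$ exists on all of $(-\infty,0]$.

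To identify the backward limit, observe that on $\operatorname{Int}(\Omega_1)$ one has $F_1>0$ and $F_2<0$, so (\ref{eq evol Y m=2}) gives $dY_1/du<0$ and $dY_2/du>0$ along $\gamma_1$. Hence both coordinates are monotone and bounded as $u\to-\infty$, so $\lim_{u\to-\infty}\gamma_1(u)$ exists and must be a zero of $G$. Of the four candidates listed in Lemma \ref{lem Y i fixed point}, the origin and $v_2$ are excluded by the lower bound $Y_1^{*}\ge Y_1(0)>0$, while $\xi$ is excluded because $\gamma_1$ lies on the one-dimensional stable manifold of the saddle $\xi$ and so cannot simultaneously lie on its unstable manifold; alternatively, the strict monotonicity statement in Lemma \ref{lem bar lambda monotone} rules this out. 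The remaining candidate is $v_1$. The main obstacle is the boundary-invariance step: one has to track carefully the signs of the outward normals and of the vector field $-G$, and also verify that the initial segment of $\gamma_1$ near $\xi$ indeed sits in $\operatorname{Int}(\Omega_1)$, which is provided by the linear analysis recorded in Remark \ref{rmk m=2 circl det neg mod}.
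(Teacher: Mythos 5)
Your proof is correct and follows the paper's strategy quite closely: part (i) is read off from the construction, and part (ii) proceeds by showing that $-G$ points strictly outward across the two curved pieces of $\partial\Omega_k$ (so that $\Omega_k$ is invariant in backward time), then using compactness of $\Omega_k$ for global backward existence, monotonicity of $Y_1$ and $Y_2$ in $\Omega_k$ to show that the backward limit exists and is a zero of $G$, and finally ruling out the non-$v_k$ candidates. Two small points of divergence are worth noting. First, your exclusion of $0$ and $v_2$ by the lower bound $Y_1(u)\geq Y_1(0)>0$ for $u\leq 0$ (since $Y_1$ increases backward in $\Omega_1$) is actually simpler and more direct than the paper's argument, which analyzes the sign of $dE(Y)/du$ near the origin. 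Second, your first reason for excluding $\xi$ — that a trajectory on the one-dimensional stable manifold of a saddle ``cannot simultaneously lie on its unstable manifold'' — is not valid as stated: hyperbolic saddles in the plane can in general support homoclinic orbits, so this requires further justification (e.g., comparing the direction along which the unstable branches leave $\xi$ into $\Omega_+$ and $\Omega_-$, versus the backward confinement of $\gamma_k$ to $\Omega_k$). However, your fallback — the strict monotonicity of $\bar{\lambda}$ from Lemma \ref{lem bar lambda monotone} — is exactly the paper's argument and closes the gap, so the proof stands.
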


\begin{proof}
$($i$)$ follows from the definition of $\gamma_k$.

$($ii$)$ We regard $\gamma_k(u)$ as a flow line of the above vector field.
By arguments similar to those for  Theorem \ref{thm circle bundle over KE manif ancient sol}, one
sees that the vector field points outside of $\Omega_k$ along the boundary points. Hence going
backwards in $u$ the flow lines $\gamma_k(u)$ would stay in $\Omega_k$.
Since $\Omega_k$ is a compact set,  it follows that  $\gamma_k(u)$ is defined on all of $(-\infty, \infty)$.

We next consider $\alpha_{\gamma_k}$,  the $\alpha$-limit set of flow line $\gamma_k$.
Since $\gamma_k(u)$ stays in the compact set $\Omega_k$,  $\alpha_{\gamma_k}$ must be non-empty,
compact, connected, and flow-invariant (see e.g. \cite[Proposition 1.4]{PaM82}).
Since solution $\gamma_k(u)$ of (\ref{eq evol Y m=2}) lies in $\Omega_k$,
one easily checks that along $\gamma_k(u)$ one of $\frac{d Y_1(u)}{du}$ and $\frac{d Y_2(u)}{du} $
is $\geq 0$ and the other is $\leq 0$, and equality holds for both if and only if $Y_1F_1(Y)
=Y_2F_2(Y)=0$. Hence $\alpha$-limit set $\alpha_{\gamma_k}$ is one of $\{0 \}, \{ v_k \}, \{\xi \}$.

When $Y \in \Omega_1$ is very close to $0$, the quantity $\frac{Y_2}{Y_1}$ is very small,
hence
\begin{align*}
& \frac{d E(Y(u))}{du} =-2n_1 q_1^2 Y_1^2 F_1(Y) - 2n_2 q_2^2 Y_2^2 F_2(Y) \\
 \sim & -2n_1q_1^2 Y_1^2 \cdot (2p_1Y_1)
-2n_2q_2^2Y_2^2 \cdot (-n_1q_1^2Y_1^2) < 0,
\end{align*}
where for two quantities $A \sim B$ means $\frac{A}{B}$ is close to $1$.
This implies that $\alpha_{\gamma_1}$ cannot be $0$.
By symmetry, $\alpha_{\gamma_2}$ cannot be $0$ either.

If $\alpha_{\gamma_k} =\xi$, then $\lim_{u \rightarrow \infty} \gamma_k(u)= \lim_{u \rightarrow -\infty}
\gamma_k(u) =\xi$,
by the monotonicity of $\bar{\lambda}(\gamma_k(u))$ from Lemma \ref{lem bar lambda monotone}
we conclude that $\gamma_k(u) =\xi$ for all $u$. This is a contradiction.
Hence $\alpha_{\gamma_k}$ cannot be $\xi$ and we get (ii).
Actually by Lemma \ref{lem newly minted Feb 20} $v_k$ is a source.
\end{proof}

\begin{theorem}  \label{thm m=2 Omega 1 2 a b, tau}
Let  $g^k_{a, \vec{b}}(\tau)$ defined by functions $a_k(\tau), b_{k1}(\tau), b_{k2}(\tau)$
be the solution of the backwards Ricci flow $($\ref{eq RF torus r=1 A1}$)$ and $($\ref{eq RF torus r=1 A2}$)$
corresponding to $\gamma_k(u)$ in Proposition \ref{prop m=2 Omega 1 2 gamma}, $k=1, 2$.
Then we have the following estimates and asymptotics.

\smallskip
\noindent $($i$)$ The maximal interval of existence of  $a_k(\tau), b_{k1}(\tau), b_{k2}(\tau)$ is
$(-T_k, \infty)$ for some $T_k >0$, and $\lim_{\tau \rightarrow -T_k^+} a_k(\tau) =0$.

\noindent  $($ii$)$ For the ancient solution $g^1_{a, \vec{b}}(\tau)$ we have
\begin{subequations}
\begin{align}
&  b_{11}(0) + \left (2p_1 -q_1^2 \frac{a(0)}{b_{11}(0)} \right )\tau   \leq b_{11} (\tau)
\leq b_{11}(0) +\frac{2n_1}{n_1+1} p_1 \tau, \quad \tau \leq 0,  \label{eq m=2 metric est b1 T} \\
& b_{12}(0) + 2p_2\tau \leq b_{12}(\tau) \leq  b_{12}(0) + \left (2p_2 -q_2^2  \frac{a(0)}{b_{12}(0)}
\right ) \tau,
\quad \tau \leq 0. \label{eq m=2 metric est b2 T}
\end{align}
\end{subequations}
Hence
\begin{equation*}
 \lim_{\tau \rightarrow -T_1^+} \frac{a_1(\tau)}{T_1+\tau } = E(v_1), \quad
  \lim_{\tau \rightarrow -T_1^+} \frac{b_{11}(\tau)}{T_1+ \tau} = \frac{(n_1+1)
 q_1^2}{2p_1}E(v_1), \quad
 \lim_{\tau \rightarrow -T_1^+}  b_{12}(\tau) >0.
\end{equation*}
Geometrically,  as $\tau \rightarrow -T_1^+$, the  circle bundle $P_Q$ with two base factors,
equipped with the metric $g^1_{a, \vec{b}} (\tau)$, collapses to a multiple of the Einstein metric
 on $M_2$ in Gromov-Hausdorff topology.
 Similar conclusions can be made for the solution $g^2_{a, \vec{b}}(\tau)$.
\end{theorem}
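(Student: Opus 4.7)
The plan is to convert the asymptotics $\lim_{u\to -\infty}\gamma_k(u) = v_k$ from Proposition~\ref{prop m=2 Omega 1 2 gamma} into $\tau$-asymptotics for $a_k$ and $b_{ki}$. I will sketch the case $k=1$; the case $k=2$ is entirely symmetric. Since $\gamma_1(u)\in \Omega_1$ for every $u$, the signs of $F_1$ and $F_2$ on $\Omega_1$ combined with $(\ref{eq evol Y m=2})$ force $Y_1(u)$ to be non-increasing and $Y_2(u)$ non-decreasing in $u$, hence monotone in the same directions in $\tau$ as well. Therefore, for $\tau\leq 0$, one has $Y_1(\tau)\in [Y_1(0),\, v_{1,1}]$ with $v_{1,1}=\frac{2p_1}{(n_1+1)q_1^2}$ and $Y_2(\tau)\in [0,\, Y_2(0)]$. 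Feeding these bounds into $\frac{db_{1i}}{d\tau}=2p_i-q_i^2Y_i$ and integrating from $\tau$ to $0$ immediately yields the two-sided estimates (\ref{eq m=2 metric est b1 T}) and (\ref{eq m=2 metric est b2 T}).

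For part (i), I combine $\frac{d\ln a}{du}=E(Y)$ with $E(Y(u))\to E(v_1)>0$ to obtain $a(u)\sim C e^{E(v_1)u}$ as $u\to-\infty$, so that $a\to 0$ exponentially fast in $u$. Since $d\tau=a(u)\,du$, the integral $T_1 := \int_{-\infty}^{0} a(v)\,dv$ converges, and $\tau(u)+T_1=\int_{-\infty}^{u}a(v)\,dv\to 0$ as $u\to -\infty$. Thus the $\tau$-maximal interval of $g^1_{a,\vec b}$ is $(-T_1,\infty)$ with $\lim_{\tau\to-T_1^+}a(\tau)=0$, proving (i).

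For the first two limits in (ii), note that $\frac{da}{d\tau}=E(Y)\to E(v_1)$ as $\tau\to -T_1^+$ by Proposition~\ref{prop m=2 Omega 1 2 gamma}. Since both numerator and denominator of $a(\tau)/(T_1+\tau)$ tend to zero, L'Hopital's rule gives $\lim_{\tau\to -T_1^+}\frac{a_1(\tau)}{T_1+\tau}=E(v_1)$. Writing $b_{11}=a/Y_1$ with $Y_1(\tau)\to v_{1,1}$ then yields the claimed limit for $b_{11}(\tau)/(T_1+\tau)$ with multiplicative factor $1/v_{1,1}=\frac{(n_1+1)q_1^2}{2p_1}$.

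The main obstacle is proving the strict positivity of $L := \lim_{\tau\to -T_1^+} b_{12}(\tau)$; the lower bound in (\ref{eq m=2 metric est b2 T}) alone does not suffice, since $T_1$ could in principle exceed $b_{12}(0)/(2p_2)$. I would argue by contradiction: suppose $L=0$. Since $Y_2=a/b_{12}\to 0$ (this is the second coordinate of $v_1$), we get $\frac{db_{12}}{d\tau}=2p_2-q_2^2 Y_2\to 2p_2$, whence $b_{12}(\tau)\sim 2p_2(T_1+\tau)$ as $\tau\to -T_1^+$. Combined with $a(\tau)\sim E(v_1)(T_1+\tau)$, this forces $Y_2(\tau)\to \frac{E(v_1)}{2p_2}>0$, contradicting $Y_2\to 0$. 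Therefore $L>0$. The Gromov--Hausdorff conclusion then follows because the fibre length-scale $\sqrt{a}$ and the $M_1$-factor length-scale $\sqrt{b_{11}}$ both collapse like $\sqrt{T_1+\tau}$, while $b_{12}\,g_2$ converges to the positive multiple $L\,g_2$ of the KE (hence Einstein) metric on $M_2$.
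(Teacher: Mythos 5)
Your proposal is correct, and for most of the theorem it follows essentially the same route as the paper: the monotonicity of $Y_1,Y_2$ along $\gamma_1 \subset \Omega_1$ gives the two-sided bounds for $b_{11},b_{12}$; the convergence $E(Y(u)) \to E(v_1) > 0$ as $u \to -\infty$ forces $a(u)$ to decay exponentially in $u$, making $T_1 = \int_{-\infty}^{0} a\,du$ finite; and L'Hopital plus $Y_1 \to v_{1,1}$ yield the first two asymptotic limits. The one place you take a genuinely different path is the ``main obstacle'' you correctly identify, the strict positivity of $L = \lim_{\tau \to -T_1^+} b_{12}(\tau)$. You argue by contradiction: assuming $L=0$ makes L'Hopital applicable to $b_{12}(\tau)/(T_1+\tau)$, giving $b_{12} \sim 2p_2(T_1+\tau)$, which together with $a \sim E(v_1)(T_1+\tau)$ forces $Y_2 \to E(v_1)/(2p_2) > 0$ and contradicts $Y_2 \to 0$. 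The paper instead argues directly: after shifting the $u$-parametrization so that the ``initial point'' of $\gamma_1$ is close to $v_1$ (so that $4p_2Y_2(0)/E(v_1)<1$ and $E(Y(u)) \geq \tfrac{1}{2}E(v_1)$ for $u \leq 0$), one integrates $da_1/d\tau = E(Y) \geq \tfrac{1}{2}E(v_1)$ to bound $T_1 \leq 2a_1(0)/E(v_1)$ and then feeds this into the lower bound of (\ref{eq m=2 metric est b2 T}) to conclude $b_{12}(\tau) \geq b_{12}(0)\bigl(1 - 4p_2Y_2(0)/E(v_1)\bigr) > 0$. Your contradiction argument is shorter and avoids the (slightly awkward) reparametrization step; the paper's direct estimate has the advantage of producing an explicit positive lower bound for $L$. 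Both are valid.
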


\begin{proof}
(i) We have proved  in Theorem \ref{lem growth a bi at xi} that the solutions exist
at least on $[0, \infty)$. An argument similar to that in the proof of
Theorem \ref{thm circle bundle over KE manif ancient sol}(ii), gives
$\lim_{u \rightarrow -\infty} \tau(u)  \doteqdot -T_k > -\infty$ for the solution
 $g^k_{a, \vec{b}}(\tau)$  and $\lim_{\tau \rightarrow -T_k^+} a_k(\tau) =0$.

(ii) Since $\gamma_1(u)$ is in $\Omega_1$, we have $\frac{dY_1}{du}\leq 0$ and
$\frac{dY_2}{du} \geq 0$ along $\gamma_1(u)$.
Hence we have that for all $\tau \leq 0$
\begin{align*}
& 0< 2p_1 -q_1^2 \cdot \frac{2p_1}{(n_1+1)q_1^2} \leq \frac{d b_{11}}{d \tau} \leq 2p_1 -q_1^2 Y_1(0), \\
&  2p_2 -q_2^2 Y_2(0) \leq \frac{d b_{12}}{d \tau} \leq 2p_2,
\end{align*}
where we have used $Y_1(u) \leq \frac{2p_1}{(n_1+1)q_1^2}$ to get the first inequality above.
Integrating these inequalities we get (\ref{eq m=2 metric est b1 T}) and (\ref{eq m=2 metric est b2 T}).

 The first limit follows from
 $\lim_{\tau \rightarrow -T_1^+} \frac{da_1}{d \tau} =E(v_1)$.
 Since $\lim_{\tau \rightarrow -T_1^+} \frac{b_{11}(\tau)}{a_1(\tau)}=\lim_{u \rightarrow -\infty} $
 $\frac{1}{Y_1 (u)} = \frac{(n_1+1)q_1^2}{2p_1}$, the second limit follows from this and the first limit.

By moving the initial  condition $Y(0)$ close to $v_1$, we may assume that $\frac{4p_2 Y_2(0)}{E(v_1)} <1$
 and  that $|E(Y(u)) -E(v_1)| \leq \frac{1}{2}
E(v_1)$ for all $u \leq 0$. We have $\frac{d a_1}{d \tau} = E(Y(u)) \geq \frac{1}{2} E(v_1)$.
Integrating the inequality over $(-T_1,0]$ we get $a_1(0) \geq  \frac{1}{2} E(v_1)T_1$.
From (\ref{eq m=2 metric est b2 T}) we get
\[
b_{12}(\tau) \geq b_{12}(0) - 2p_2 T_1 \geq  b_{12}(0) - \frac {4p_2 a_1(0)}{E(v_1)}
\geq  b_{12}(0) \left (1- \frac{4p_2 Y_2(0)}{E(v_1)} \right ) >0.
\]
This proves the third limit.
\end{proof}

Now we discuss the curvature properties of the ancient solutions $g^k_{a, \vec{b}}(\tau)$
for $\tau$ close to $-T_k^+$. Note that the curvature properties for $\tau$ close to $\infty$
have been addressed in Theorem \ref{prop asym beha xi at infty}.

\begin{theorem}  \label{thm Curv property m=2 Omega 1 2 a b, tau}
Let $g^k_{a, \vec{b}}(\tau)$ be the ancient solution of the Ricci flow on a circle bundle
$P_Q$ with two base factors corresponding to $\gamma_k(u)$ in Proposition \ref{prop m=2 Omega 1 2 gamma},
$k=1, 2$. Then

\smallskip
\noindent $($i$)$ $g^k_{a, \vec{b}}(\tau)$ develops a singularity of type I as $\tau \rightarrow -T_k^+$.
By Theorem \ref{thm m=2 Omega 1 2 a b, tau}$($ii$)$ the type I rescaled metric $(T_k+\tau)^{-1}
g^k_{a, \vec{b}} (\tau)$ converges in the Gromov-Hausdorff topology to the Riemannian product
of the Einstein metric
\[
E(v_1) \left ( \sigma (\cdot) \otimes \sigma  (\cdot) + \frac{(n_1+1)q_1^2}{2p_1} g_1 \right )
\]
on the  circle bundle over $M_1$ and the Euclidean metric on $\mathbb{R}^{2n_2}$;

\noindent $($ii$)$ there is a $\kappa >0$ such that  $g^k_{a, \vec{b}}(\tau), \tau \in (-T_k, 0]$, is
$\kappa$-noncollapsed at all scales.
\end{theorem}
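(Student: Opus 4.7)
The plan is to imitate the strategies of Theorem \ref{prop asym beha at infty}(i) and Theorem \ref{prop asym beha xi at infty}(ii), feeding in the sharper asymptotics supplied by Theorem \ref{thm m=2 Omega 1 2 a b, tau}(ii). I treat the $k=1$ case; $k=2$ is symmetric.

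For the type I bound in (i), I would run the O'Neill-formula calculation from the proof of Theorem \ref{prop asym beha at infty}(i). The nonzero components of $\operatorname{Rm}_{g^1_{a,\vec b}(\tau)}$ in the $g^1_{a,\vec b}(\tau)$-orthonormal frame are sums of terms of the form $1/b_{1i}(\tau)$, $a_1(\tau)/b_{1i}(\tau)^2$, and $a_1(\tau)/(b_{1i}(\tau)\,b_{1j}(\tau))$. By Theorem \ref{thm m=2 Omega 1 2 a b, tau}(ii), as $\tau\to -T_1^+$, both $a_1$ and $b_{11}$ vanish linearly in $T_1+\tau$ while $b_{12}$ stays bounded away from $0$; each term above is therefore at worst of order $(T_1+\tau)^{-1}$, yielding the type I estimate.

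For the Gromov-Hausdorff assertion in (i), I would decompose the rescaled metric as
\[
\tilde g(\tau) \doteqdot (T_1+\tau)^{-1} g^1_{a,\vec b}(\tau) = \frac{a_1(\tau)}{T_1+\tau}\,\sigma\otimes\sigma + \frac{b_{11}(\tau)}{T_1+\tau}\,g_1 + \frac{b_{12}(\tau)}{T_1+\tau}\,g_2.
\]
By Theorem \ref{thm m=2 Omega 1 2 a b, tau}(ii) the first two coefficients tend to $E(v_1)$ and $\tfrac{(n_1+1)q_1^2}{2p_1}E(v_1)$, producing in the limit the Einstein metric on the $M_1$-circle bundle associated with $v_1$ (Remark \ref{Einstein-point}). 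The third coefficient tends to $+\infty$, so the compact Fano factor $(M_2,g_2)$ blows up and has pointed Gromov-Hausdorff limit $\R^{2n_2}$. The main obstacle is verifying that the principal connection does not entangle the two base factors in the limit: even though $\sigma$ has curvature component $q_2\omega_2$ supported on $M_2$, the weights in the rescaled O'Neill tensor that carry this contribution scale like $(a_1(\tau)/(T_1+\tau))\cdot((T_1+\tau)/b_{12}(\tau))^2 \to 0$, so the mixed terms vanish and a local product structure emerges, from which the stated pointed Gromov-Hausdorff limit follows.

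For (ii) I would split the existence interval as in the proof of Theorem \ref{prop asym beha xi at infty}(ii). On a compact subinterval $[-T_k+\epsilon, 0]$ the family $g^k_{a,\vec b}(\tau)$ consists of smooth nondegenerate metrics on the compact $P_Q$ with continuous dependence on $\tau$, yielding some $\kappa_1>0$ for which each is $\kappa_1$-noncollapsed at all scales. On $(-T_k, -T_k+\epsilon]$, scale invariance of noncollapsing lets me work with $\tilde g(\tau)$; by part (i) these have uniformly bounded curvature and converge in pointed Gromov-Hausdorff topology to the non-flat Riemannian product of a compact Einstein manifold with $\R^{2n_2}$, which is $\kappa_2$-noncollapsed at all scales for some $\kappa_2>0$. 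A standard compactness argument transfers the uniform $\kappa_2$-noncollapsing to nearby $\tilde g(\tau)$, and I conclude by taking $\kappa=\min\{\kappa_1,\kappa_2\}$.
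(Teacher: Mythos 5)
Your argument is correct and follows essentially the same route as the paper's: inspect the O'Neill curvature formulas from Theorem \ref{prop asym beha at infty}(i) using the asymptotics of Theorem \ref{thm m=2 Omega 1 2 a b, tau}(ii) to get the type I bound, identify the limit Einstein metric via $F_1=0$ (Remark \ref{Einstein-point}), and for (ii) reuse the two-regime noncollapsing argument of Theorem \ref{prop asym beha xi at infty}(ii), replacing the compact Einstein limit by the non-flat product $N^{2n_1+1}\times\R^{2n_2}$. The one place where you go beyond the paper is the explicit check that the rescaled O'Neill terms carrying $q_2\omega_2$ decay like $a_1(\tau)(T_1+\tau)/b_{12}(\tau)^2\to 0$, so that the connection indeed decouples from the expanding $M_2$ factor; the paper dismisses this as ``obvious,'' and your computation is a welcome piece of bookkeeping that confirms it.
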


\begin{proof}
$($i$)$ Note that Theorem \ref{thm m=2 Omega 1 2 a b, tau}(ii) gives the asymptotic behavior of
 the functions $a_k(\tau)$ and $b_{ki}(\tau)$ as $\tau \rightarrow -T_k^+$.
An inspection of the proof of Theorem \ref{prop asym beha at infty}(i) gives
\[
(T_k+ \tau) \cdot \sup_{x \in P_Q}|\operatorname{Rm}_{g^k_{a, \vec{b}} (\tau)} (x) |_{g_{a,
 \vec{b}} (\tau)} \leq C < \infty.
 \]
The convergence of $(T_k+\tau)^{-1}g^k_{a, \vec{b}} (\tau)$ is obvious.

Following the definition of torus bundle $P_Q$ at the beginning of \S \ref{sec 2 RF eq},
the  circle bundle on $M_1$ is the bundle with Euler class $\frac{1}{2 \pi} q_1 [\omega_1]$.
Note that $F_1(Y_1) = 2p_1Y_1 -(n_1+1)q_1^2 Y_1^2$ for this  circle bundle with $m=1$.
We have $Y_1 =\frac{a}{b_1} = \frac{2p_1}{(n_1 +1)q_1^2}$ for metric
 $\sigma (\cdot) \otimes \sigma  (\cdot) + \frac{(n_1+1)q_1^2}{2p_1} g_1$.
 Since this $Y_1$ is a solution of $F_1(Y_1) =0$, this metric is Einstein by Remark \ref{Einstein-point}.

$($ii$)$ Since the Einstein metric $\sigma (\cdot) \otimes \sigma  (\cdot) + \frac{(n_1+1)q_1^2}{2p_1}
g_1$ is not flat, (ii) follows from the same argument in the proof of Theorem \ref{prop asym beha xi at infty}(ii).
\end{proof}

\vskip .1cm
Next we consider one of the $\kappa$-noncollapsed ancient solutions  from
Theorem \ref{thm flow near origin}(ii) which are defined on circle bundles $P_Q$ with $m=3$.
It follows from Theorem \ref{thm flow near origin}(i)  and Lemma \ref{lem non ancient or ancient sol}
that the corresponding flow line $Y(u)$ stays in a bounded set, and hence
it exists on $(- \infty, \infty)$.  To find the limit of  $Y(u)$ as $u \rightarrow - \infty$,
let $\alpha_Y$ be the  $\alpha$-limit set of $Y(u)$.
Then $\alpha_Y$ must be non-empty, compact, connected, and  flow-invariant.

If $\alpha_Y$ contains only  fixed points of the flow, then since $m=3$, $\alpha_Y$ must be
one of $0, v_{\{1\}}, v_{\{2\}},v_{\{3\}},$   $ v_{\{1,2\}}, v_{\{1,3\}},
v_{\{2,3\}},$ or $ \xi$.
We can ruled out $0$ since $0$ is an attractor of the flow by Theorem \ref{thm flow near origin}(i).
We can ruled out  $\xi$ by the monotonicity of $\bar{\lambda}$ as in the proof
of Proposition \ref{prop m=2 Omega 1 2 gamma}(ii).
Hence we conclude that when $\alpha_Y$ contains only fixed points,
 then $\lim_{u \rightarrow -\infty}Y(u)$ is equal to one of
 $v_{\{1\}}, v_{\{2\}},v_{\{3\}}, v_{\{12\}}, v_{\{13\}}, v_{\{23\}}$.

If $\alpha_Y$ contains points other than fixed points, then by a proof analogous to that
of Lemma \ref{lem non ancient or ancient sol}(iii)  we conclude that $\alpha_Y$ is a
subset of some coordinate plane. Without loss of generality we may assume  that
$\alpha_Y$ is a bounded flow line
$Y^\infty(u)=(Y^{\infty}_1(u), Y^{\infty}_2(u),0)$ where $Y^{\infty}_1(u) >0$
and $Y^{\infty}_2(u) \geq 0$.
By \cite[Theorem VII.1.2]{Hr64}, $Y^\infty(u)$ is defined on $(-\infty, \infty)$.
There are two cases. In the case $Y^{\infty}_2(u)=0$ for some $u$, the flow line is of the form
$(Y^{\infty}_1(u),0)$ by Lemma \ref{lem non ancient or ancient sol} (i.e., we are
actually in the  $m=1$ case) and must flow from $v_1$ to $0$ .
However the corresponding $Y^\infty(u)$ cannot be the $\alpha$-limit set of flow $Y(u)$
 since $0$ is an attractor of the flow. Therefore, we must be in the second case with
$Y^{\infty}_2(u)>0$ for all $u$. We are then reduced to the $m=2$ situation and the flow line
 $(Y^{\infty}_1(u), Y^{\infty}_2(u))$ must be one of the two flow lines described in
 Proposition \ref{prop m=2 Omega 1 2 gamma} which flows from $v_k$ to $\xi$.
But since $(v_k,0)$ is a repeller of our flow by Lemma \ref{lem newly minted Feb 20}(ii),
the corresponding $Y^\infty(u)$ cannot be the $\alpha$-limit set of flow $Y(u)$.
Hence the case of $Y^{\infty}_2(u)>0$ is also ruled out, and we have therefore
ruled out the possibility that $\alpha_Y$ contains points other than fixed points.

The argument above proves the existence of flow lines $Y(u)$ from one of
$v_{\{1\}}, v_{\{2\}},v_{\{3\}}$, $v_{\{12\}}, v_{\{13\}}, v_{\{23\}}$ to $\xi$.
Furthermore we can argue as in the proof of Theorem \ref{thm Curv property m=2 Omega 1 2 a b, tau}
and get

\begin{theorem}
When $m=3$, each ancient solution  $g_{a, \vec{b}}(\tau)$ in Theorem  \ref{thm flow near origin}$($ii$)$,
which is parametrized by the directions in a circle, develops a type I singularity at some finite time $-T < 0$.
 The rescaled metrics $(T+ \tau)^{-1} g_{a, \vec{b}}(\tau)$ converge to the Riemannian product of some
Einstein metric and a Euclidean space as $\tau \rightarrow -T^+$.
\end{theorem}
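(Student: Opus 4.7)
The plan is to adapt the arguments from Theorems \ref{thm m=2 Omega 1 2 a b, tau} and \ref{thm Curv property m=2 Omega 1 2 a b, tau} for the $m=2$ case, combining them with the classification of $\alpha$-limit sets for $m=3$ carried out in the paragraphs immediately preceding the theorem. That classification shows that for each flow line $Y(u)$ in the $1$-parameter family of Theorem \ref{thm flow near origin}(ii), the $\alpha$-limit set is a single fixed point $v_\theta$ for some nonempty $\theta \subsetneq \{1,2,3\}$, so that $Y_i(u) \to (v_\theta)_i > 0$ for $i \in \theta$ and $Y_j(u) \to 0$ for $j \notin \theta$ as $u \to -\infty$.

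First I would show that the corresponding backward $\tau$-time is finite, i.e.\ $\lim_{u \to -\infty} \tau(u) = -T$ for some $T > 0$, and that $a(\tau) \to 0$ with $a(\tau)/(T+\tau) \to E(v_\theta)$. This follows the argument at the end of the proof of Theorem \ref{thm circle bundle over KE manif ancient sol}(ii): the convergence $E(Y(u)) \to E(v_\theta) > 0$ gives $\frac{da}{d\tau} \geq \frac{1}{2}E(v_\theta)$ for $u$ sufficiently negative, so from $d\tau = a(u)\, du$ and (\ref{u-defn}) one concludes that $\tau(u)$ stays bounded below, with the rate coming from L'Hopital's rule. Next I would analyse the base coefficients $b_i$. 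For $i \in \theta$, $b_i = a/Y_i$ yields $b_i(\tau)/(T+\tau) \to E(v_\theta)/(v_\theta)_i > 0$, so $b_i \to 0$ linearly. For $j \notin \theta$, $Y_j \to 0$ together with (\ref{eq RF torus r=1 A2}) gives $\frac{db_j}{d\tau} \to 2p_j$; by choosing $Y(0)$ sufficiently close to $v_\theta$ within the $1$-parameter family, $T$ becomes small and one can arrange $b_j(\tau)$ to remain uniformly positive and bounded on $(-T, 0]$, mirroring the argument for the third limit in Theorem \ref{thm m=2 Omega 1 2 a b, tau}(ii).

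Plugging these asymptotics into the component-by-component curvature computation used in the proof of Theorem \ref{prop asym beha at infty}(i) then gives the type I bound $(T+\tau)\sup_{x \in P_Q} |\operatorname{Rm}_{g_{a, \vec{b}}(\tau)}(x)|_{g_{a,\vec{b}}(\tau)} \leq C$. To identify $\lim_{\tau \to -T^+} (T+\tau)^{-1} g_{a,\vec b}(\tau)$, observe that on the circle fibre and on the factors $M_i$ with $i \in \theta$ the rescaled coefficients converge to the positive constants $E(v_\theta)$ and $E(v_\theta)(v_\theta)_i^{-1}$ respectively; these assemble to the Einstein metric on the subbundle $P_{Q_\theta} \to \prod_{i \in \theta} M_i$ built from $v_\theta$ as in Remark \ref{Einstein-point}. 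For each $j \notin \theta$, the rescaled coefficient $(T+\tau)^{-1} b_j(\tau)$ blows up, so at any base point in $M_j$ the pointed Gromov--Hausdorff limit is the Euclidean space $\R^{2n_j}$ by a standard blow-up of a compact Riemannian manifold. Taking the Riemannian product over $j \notin \theta$ yields the Einstein metric on $P_{Q_\theta}$ times $\prod_{j \notin \theta} \R^{2n_j}$, which is the required limit.

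The main technical obstacle I anticipate is establishing the uniform two-sided positive bound on $b_j(\tau)$ for $j \notin \theta$ throughout $(-T, 0]$. This forces an a priori restriction of the initial data to a neighbourhood of $v_\theta$, which is the same subtlety that appears in the last step of the proof of Theorem \ref{thm m=2 Omega 1 2 a b, tau}(ii); once this is in hand, the remaining steps are routine adaptations of arguments already developed in the excerpt, and the $\kappa$-noncollapsing at all scales (should it be desired here) follows as in Theorem \ref{prop asym beha xi at infty}(ii) from the non-flatness of the limiting Einstein factor.
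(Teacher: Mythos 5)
Your proposal is correct and takes essentially the same approach as the paper: the paper's proof simply says "argue as in the proof of Theorem \ref{thm Curv property m=2 Omega 1 2 a b, tau}," and your write-up is precisely that argument carried out in detail, including the classification of $\alpha$-limit sets from the preceding discussion, the finite-$\tau$ estimates via $E(Y(u)) \to E(v_\theta) > 0$, the split of base factors into $i \in \theta$ (collapsing linearly) versus $j \notin \theta$ (staying bounded away from zero and blowing up under rescaling), and the identification of the limit as Einstein times Euclidean. The observation that one may reparametrize so $Y(0)$ is near $v_\theta$ -- rather than restricting the family -- is exactly the device used in the $m=2$ case.
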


\begin{rmk}
For the ancient solutions in Theorem \ref{thm flow near origin}(ii) when $m=3$,
by dividing up the parametrizing circle, one may speculate that for each $v_{\{k\}}$,
$k=1,2,3$, there is a one-parameter family of ancient solutions $Y(u)$ which
connect $v_{\{k\}}$ to $\xi$. Moreover, these families are separated
by the three ancient solutions connecting $v_{\{12\}}$, $v_{\{13\}}$, and
$v_{\{23\}}$ with the Einstein point $\xi$.
For any $m \geq 3$, under the assumption that $p_i /|q_i|$ is independent of $i$, we
actually can verify a statement similar to the speculation.
\end{rmk}

\begin{rmk} \label{rmk Buzano part 2}
In \cite{Bo15} and \cite{BLS16} the authors studied properties of the homogeneous Ricci flow
and proved some general structure theorems about the existence and behavior of finite
time singularities. There is also an unpublished preprint \cite{Buz2} about the homogeneous
Ricci flow for compact homogeneous spaces whose isotropy representation is multiplicity free
and satisfies further technical conditions, but otherwise assumes no bound on the number of
irreducible summands.  Some of what we deduced in Theorems \ref{lem growth a bi},
\ref{thm Curv property m=2 Omega 1 2 a b, tau}, and \ref{curv--toruscase} below  about finite-time
convergence or collapse at Einstein metrics are special cases of what these authors proved when
all of the Fano KE base factors in $P_Q$ are compact homogeneous K\"ahler manifolds.
\end{rmk}

\section{\bf Ancient solutions of Ricci flow on torus bundles of rank $r > 1$} \label{sec 4 torus bundles}

Let $P_Q$ be a torus bundle of rank $r$ over a product of Fano KE manifolds
$M_1 \times \cdots \times M_m$ which satisfies the non-degeneracy  assumption
in \S \ref{sec 2 RF eq}. In this section we will consider the $r>1$ case of
the backwards Ricci flow (\ref{eq RF torus r A1}) and (\ref{eq RF torus r A2})
and establish the existence and asymptotic geometric properties of the ancient
solutions $g_{h, \vec{b}}(\tau)$ defined by (\ref{eq family of Riem metric}).
The method of proving existence is a generalization of the method used to
prove Theorem \ref{thm flow near origin}(i). We will use the notation of
\S \ref{sec 2 RF eq}.

Recall that we have fixed a decomposition of the torus $T^r
=S^1 \times \cdots \times S^1$ and hence we  have a corresponding basis
$\{e_\alpha; \alpha=1, \cdots, r\}$ for the Lie algebra $\tf$ of $T^r$.
As usual let $(h^{\alpha \beta}(\tau))_{r \times r}$
denote the inverse of matrix $(h_{\alpha \beta}(\tau))_{r \times r}$.
We begin with the following simple lemma.

\begin{lemma}\label{lem T r easy conseq}
The following estimates hold for solutions of $($\ref{eq RF torus r A1}$)$ and $($\ref{eq RF torus r A2}$)$:

\noindent $($i$)$ $b_i(\tau) \leq 2p_i \tau + b_i(0)$,

\noindent $($ii$)$ $0 \leq h^{\alpha \alpha}(\tau) \leq h^{\alpha \alpha}(0)$ and $|h^{\alpha \beta}(\tau)|
\leq C_{\alpha \beta}$ for some constant $C_{\alpha \beta}$,

\noindent $($iii$)$ there is a constant $c>0$ such that matrix $(h_{\alpha \beta}(\tau)) \geq c I_{r \times r}$.
\end{lemma}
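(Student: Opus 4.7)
The plan is to establish the three assertions in the order (iii), (i), (ii). The reason for this ordering is that the argument for (iii) automatically produces positive definiteness of $(h_{\alpha\beta}(\tau))$ throughout the interval of existence, and this positive definiteness is the key ingredient driving the proofs of (i) and (ii).

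For (iii), I would fix an arbitrary $v = (v^1, \ldots, v^r) \in \R^r$ and examine the evolution of the scalar quantity $Q_v(\tau) := v^\alpha v^\beta h_{\alpha\beta}(\tau)$. Substituting (\ref{eq RF torus r A1}) and abbreviating $w_{\tilde{\alpha}} := v^\beta h_{\beta \tilde{\alpha}}(\tau)$, a short computation yields
\begin{equation*}
\frac{d Q_v}{d\tau} = \sum_{i=1}^{m} \frac{n_i}{b_i^2(\tau)} \left( \sum_{\tilde{\alpha}=1}^{r} q_{\tilde{\alpha} i} \, w_{\tilde{\alpha}} \right)^{\!2} \geq 0.
\end{equation*}
Therefore $(h_{\alpha\beta}(\tau)) \geq (h_{\alpha\beta}(0))$ in the symmetric-matrix order, and taking $c$ to be the smallest eigenvalue of the positive definite matrix $(h_{\alpha\beta}(0))$ proves (iii). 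In particular, $(h_{\alpha\beta}(\tau))$ remains positive definite on the whole interval of existence.

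For (i), the inner sum on the right side of (\ref{eq RF torus r A2}) is the value of the positive semidefinite form $(h_{\alpha\beta}(\tau))$ on the vector $(q_{1 i}, \ldots, q_{r i})$, hence is nonnegative. Thus $d b_i/d\tau \leq 2 p_i$, and integration over $[0,\tau]$ yields the bound in (i).

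For (ii), I would differentiate the identity $\sum_\gamma h^{\alpha\gamma} h_{\gamma \beta} = \delta^{\alpha}_{\beta}$, insert (\ref{eq RF torus r A1}), and contract with $h^{\beta \delta}$; the cancellation via $\sum_\gamma h^{\alpha\gamma} h_{\gamma \tilde{\alpha}} = \delta^{\alpha}_{\tilde{\alpha}}$ collapses the expression to the clean formula
\begin{equation*}
\frac{d h^{\alpha\beta}}{d\tau} = - \sum_{i=1}^{m} \frac{n_i \, q_{\alpha i} \, q_{\beta i}}{b_i^2(\tau)}.
\end{equation*}
Setting $\alpha = \beta$ shows $h^{\alpha\alpha}(\tau)$ is non-increasing in $\tau$, while positive definiteness of $(h^{\alpha\beta}(\tau))$ (the inverse of the positive definite matrix from (iii)) gives $h^{\alpha\alpha}(\tau) \geq 0$. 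The Cauchy--Schwarz inequality for this positive definite form then provides $|h^{\alpha\beta}(\tau)|^2 \leq h^{\alpha\alpha}(\tau) \, h^{\beta\beta}(\tau) \leq h^{\alpha\alpha}(0) \, h^{\beta\beta}(0)$, so one may take $C_{\alpha\beta} := \sqrt{h^{\alpha\alpha}(0) \, h^{\beta\beta}(0)}$.

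I anticipate no substantive obstacle: every step reduces to a direct calculation from the backwards Ricci flow equations followed by a standard fact from linear algebra about positive definite symmetric matrices. The only point requiring a bit of care is confirming that the sign in the evolution of $h^{\alpha\beta}$ is correct, and that the reduction of the sum via $\sum h^{\alpha\gamma} h_{\gamma\tilde{\alpha}} = \delta^{\alpha}_{\tilde{\alpha}}$ really does eliminate all the $h$-factors, leaving the quadratic polynomial in $q_{\alpha i}, q_{\beta i}$ that drives the monotonicity.
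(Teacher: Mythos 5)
Your proof is correct and, apart from part (iii), follows essentially the same route as the paper: for (i) both you and the paper observe that the fibre contribution in (\ref{eq RF torus r A2}) is nonnegative because $(h_{\alpha\beta})$ is positive semidefinite, and for (ii) both compute $\frac{d}{d\tau}h^{\alpha\beta} = -\sum_i n_i q_{\alpha i} q_{\beta i}/b_i^2$ to get monotonicity of the diagonal entries and then use positive definiteness of $H^{-1}$ for the off-diagonal bound. The genuine difference is in (iii) and in where positive definiteness of $H(\tau)$ comes from. The paper opens by asserting that $(h_{\alpha\beta}(\tau))$ is automatically positive definite because the system is a special case of the Ricci flow, and then disposes of (iii) with the one-liner ``this follows from (ii)'' (the bounds on the entries of $H^{-1}$ bound its largest eigenvalue, hence bound $\lambda_{\min}(H)$ from below). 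You instead prove (iii) directly by showing that $v^T H(\tau) v$ is nondecreasing in $\tau$ for every fixed $v$, which gives the sharper, more explicit conclusion $H(\tau) \geq H(0)$ in the symmetric-matrix order, yielding $c = \lambda_{\min}(H(0))$ and simultaneously establishing positive definiteness without appealing to general Ricci flow theory. Your ordering (iii) $\to$ (i) $\to$ (ii) is consistent and self-contained; if anything it is a tidier logical arrangement than the one in the paper.
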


\begin{proof} We first note that since the system $($\ref{eq RF torus r A1}$)$ and
 $($\ref{eq RF torus r A2}$)$
represent, up to a sign, a special case of the Ricci flow, the matrix $(h_{\alpha \beta}(\tau))$
is automatically positive definite as long as the flow exists at $t=-\tau$. Likewise, $b_i(\tau) > 0$.

(i) In (\ref{eq RF torus r A2}) we have $\sum_{\tilde{\alpha}, \tilde{\beta}=1}^r
 q_{\tilde{\alpha} i}q_{\tilde{\beta} i} h_{\tilde{\alpha} \tilde{\beta}}(\tau) \geq 0$ for each $i$,
 hence $\frac{d}{d \tau}b_i \leq 2p_i$ and the estimate follows.

\vskip .1cm
(ii) It follows from (\ref{eq RF torus r A1}) that
\[
 \frac{d}{d \tau}h^{\alpha \alpha} =- \sum_{i=1}^m   \frac{n_i q_{\alpha i}^2 }{b_i^2(\tau)}  \leq  0.
\]
Hence we get the first estimate in (ii). The second estimate then follows from the fact that matrix
$(h^{\alpha \beta}(\tau))$ is positive definite.

\vskip .1cm
(iii) This follows from (ii).
\end{proof}

To analyze the system (\ref{eq RF torus r A1}) and (\ref{eq RF torus r A2}) further,
we will use the dependent variables $h_{\alpha \beta}$ together with new dependent variables
\begin{equation}
\hat{Y}_i \doteqdot \frac{\hat{a}}{b_i}, \quad 1\leq i \leq m \label{eq def hat a Y var}
\end{equation}
where
\begin{equation}
\hat{a} \doteqdot \sum_{\alpha =1}^r h_{\alpha \alpha} = \tr H,
\end{equation}
and $H$ denotes the self-adjoint linear operator on $\R^r$ associated to the
symmetric tensor $h$ via the background Euclidean metric $\langle \,\, , \,\, \rangle$.
We introduce the vector $\hat{Y} = (\hat{Y}_1, \cdots, \hat{Y}_m)$
as well as the new independent variable $\hat{u}$ given by
\begin{equation}
\hat{u} =\hat{u}(\tau) = \int_0^\tau \frac{1}{\hat{a}(\zeta)}\, d \zeta. \label{eq def hat u var}
\end{equation}

Before transforming the system (\ref{eq RF torus r A1}) and (\ref{eq RF torus r A2}) to
our new variables, it is convenient to introduce in addition the vectors
\begin{equation} \label{colQ}
    Q^{(j)} \doteqdot \sum_{\alpha=1}^{r} \, q_{\alpha j}\, e_{\alpha}, \,\,\, 1 \leq j \leq m,
\end{equation}
where $\{e_1, \cdots, e_r\}$ is the standard basis of $\R^r$ from \S 2,
and the vectors
\begin{equation} \label{V-defn}
     v_{\alpha} \doteqdot \sum_{i=1}^{m} \, q_{\alpha i} \frac{\sqrt{n_i}}{b_i}\, {\tilde e}_i,
     \,\,\,\, 1 \leq \alpha \leq r,
\end{equation}
where $\{\tilde{e}_1, \cdots, \tilde{e}_m\}$ is the standard basis of $\R^m$.
The vectors $v_{\alpha}$ were first introduced in \cite{WZ90} and the matrix
$$ V \doteqdot (V_{\alpha \beta}= \langle v_{\alpha}, v_{\beta} \rangle)_{r \times r}$$
is actually positive definite since the matrix $Q$ has rank $r$ by the
non-degeneracy assumption.

Notice that the equations (\ref{eq RF torus r A1}) can now be written as
\begin{equation} \label{eq 4.6}
        \frac{dH}{d\tau} = HVH,
\end{equation}
so that
\begin{equation}  \label{fibre-eq}
       \frac{dH}{d {\hat u}} = \frac{1}{\hat a} (H (\hat{a}^2 V)H).
\end{equation}
Since ${\hat a}^2 V$ is a quadratic function of the ${\hat Y}_i$, (\ref{fibre-eq}) is an
equation involving $\hat Y$ and $H$. Moreover, using the definition of the
vectors $v_{\alpha}$, we obtain
\begin{equation}
\tr(HVH) = \sum_{i=1}^{m} \, \sum_{\alpha=1}^{r} \frac{n_i}{b_i^2} \left((HQ)_{\alpha i}\right)^2
      = \frac{1}{\hat{a}^2} \sum_{i=1}^m \sum_{\alpha=1}^r   n_i {\hat Y}_i^2
       \left((HQ)_{\alpha i}\right)^2  .   \label{eq 4.8 named}
\end{equation}
Together with
\begin{equation}
        \frac{d\hat{a}}{d \tau} = \tr(HVH),
\end{equation}
we obtain
\begin{equation}  \label{eq-ahat}
    \frac{d {\hat a}}{d {\hat u}} = \frac{1}{\hat{a}} \sum_{\alpha=1}^r \sum_{i=1}^m n_i {\hat Y}_i^2
        \left((HQ)_{\alpha i}\right)^2  .
\end{equation}

Next, noting that
\begin{equation}
     h(Q^{(i)}, Q^{(i)}) = \sum_{\tilde{\alpha}, \tilde{\beta}=1}^r \,q_{\tilde{\alpha}i} q_{\tilde{\beta}i }
                  h_{\tilde{\alpha}\tilde{\beta}},
\end{equation}
the equations (\ref{eq RF torus r A2}) can be expressed as
\begin{eqnarray} \label{eq hat Y v f}
 \frac{d \hat{Y}_i}{d\hat{u}}  & = &\hat{a} \frac{d \hat{Y}_i}{d\tau}   \notag \\
&= & \hat{Y}_i  \left ( \tr(HVH) + {\hat Y}_i  \,\frac{ h(Q^{(i)}, Q^{(i)})}{b_i}  -2p_i \hat{Y}_i \right ) \notag \\
& = &  \hat{Y}_i  \left (\frac{\tr(H ({\hat a}^2 V)H)}{{\hat a}^2} +
        \left( \frac{ h(Q^{(i)}, Q^{(i)})}{\hat{a}}\right) {\hat Y}_i^2 -2p_i \hat{Y}_i \right ).
\end{eqnarray}
Since the matrix $H(\tau) =(h_{\alpha \beta}(\tau))$ is positive definite,
we have
\begin{equation} \label{coeff-bd1}
 0 \leq \frac{h(Q^{(i)}, Q^{(i)})}{\hat a} \leq \frac{\|H\| \|Q^{(i)} \|^2 }{\hat a} \leq \|Q^{(i)} \|^2
\end{equation}
where $\| A \|$ denotes the usual matrix norm $\sqrt{\tr(A^tA)}$ (as well as the Euclidean norm, by
abuse of notation).
Also,
\begin{equation} \label{coeff-bd2}
\frac{\sum_{\alpha=1}^r ((HQ)_{\alpha i})^2}{{\hat a}^2} \leq \frac{(\|H\| \|Q^{(i)}\|)^2}{{\hat a}^2}
 \leq \|Q^{(i)}\|^2.
\end{equation}
So setting $c_i \doteqdot \|Q^{(i)}\|^2$ and using (\ref{eq 4.8 named}) we obtain the useful inequality
\begin{equation}
\frac{d \hat{Y}_i }{d\hat{u}}  \leq \hat{Y}_i \left (r \sum_{j=1}^m n_j \,c_{j} \hat{Y}_j^2
+c_{i} \hat{Y}_i^2 -2p_i \hat{Y}_i \right ).  \label{ineq tilde Y rank r tau}
\end{equation}

\begin{rmk}
The constants $c_i$ defined above clearly depend only on the topology of the bundle.
Furthermore, under the non-degeneracy assumption of $P_Q$, these constants are positive.
\end{rmk}

We next note that
\begin{equation*}
\sum_{i=1}^m 2p_i \hat{Y}_i^2 \geq \frac{2}{m} \left (\sum_{i=1}^m  \hat{Y}_i \right )^2,
\end{equation*}
and that there is a positive constant $c_{0} $ depending only on the torus
bundle (with $c_0 \geq c_i, 1 \leq i \leq m$), such that
\begin{equation}
\sum_{i=1}^m \hat{Y}_i \left (r \sum_{j=1}^m n_jc_{j} \hat{Y}_j^2
+c_{i} \hat{Y}_i^2 \right ) \leq c_0 \left (\sum_{i=1}^m \hat{Y}_i \right )^3 . \label{eq def c0 need}
\end{equation}
We introduce the quantity $\hat{E}(\hat{Y}) \doteqdot \sum_{i=1}^m  \hat{Y}_i$.
By (\ref{ineq tilde Y rank r tau}) we have
\begin{equation}
\frac{d \hat{E}}{d\hat{u}}   \leq c_0\hat{E}^3 - \frac{2}{m} \hat{E}^2
     = {\hat E}^2 \left( c_0 {\hat E} - \frac{2}{m}  \right).  \label{eq tilde Y rank r}
\end{equation}

The following result establishes the existence of  ancient solutions.

\begin{theorem}    \label{prop T r bundle over KE manif ancient sol}
Let  $P_Q$ be a torus bundle over a product of Fano KE manifolds
$M_1 \times \cdots \times M_m$ which satisfies the non-degeneracy assumption
in \S  \ref{sec 2 RF eq}. Choose an initial invariant metric
$h(0)=(h_{\alpha \beta}(0))$ on torus $T^r$ and positive constants
$b_i(0), 1 \leq i \leq m,$ which satisfy
\[
\sum_{i=1}^m \frac{\sum_{\alpha =1}^r h_{\alpha \alpha}(0)}{b_i(0)} < \frac{1}{c_0m},
\]
where $c_0$ is given in $($\ref{eq def c0 need}$)$. Let $h_{\alpha \beta}(\tau)$ and $b_i(\tau)$ be
the solution of the initial value problem of the backwards Ricci flow $($\ref{eq RF torus r A1}$)$
and $($\ref{eq RF torus r A2}$)$.  Then

\smallskip
\noindent{$($i$)$} the corresponding function $\hat{Y}(\hat{u})$ is defined for all  $\hat{u} \in [0,\infty)$,
    ${\hat Y}_i (\hat u) \leq {\hat E}(\hat u) = \sum_{i=1}^m  \hat{Y}_i \leq \frac{m}{\hat u}$ for all $i$,
     and hence $\lim_{\hat{u} \rightarrow \infty} \hat{Y}(\hat{u}) =0$;

\noindent{$($ii$)$} $h_{\alpha \beta}(\tau)$ and $b_i(\tau)$
      are defined for all  $\tau \in [0, \infty)$, i.e., the corresponding metric $g_{h, \vec{b}}(\tau)$
      gives an ancient solution of the Ricci flow on  $P_Q$;

\noindent{$($iii$)$} for each $i$ and $\tau \in [0,\infty)$,
\[
\left(2p_i - \frac{1}{m}\right)\, \tau+ b_i(0) \leq b_i(\tau) \leq  2p_i \tau + b_i(0).
\]
\end{theorem}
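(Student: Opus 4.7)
The plan is to reduce everything to the scalar differential inequality (\ref{eq tilde Y rank r}) for $\hat{E}(\hat{u}) \doteqdot \sum_{i=1}^m \hat{Y}_i(\hat{u})$, and then to transfer the resulting estimates back into the $\tau$-time parametrization. The hypothesis on the initial data is precisely $\hat{E}(0) < \frac{1}{c_0 m}$, which makes $c_0 \hat{E}(0) - \frac{2}{m} < -\frac{1}{m} < 0$, so the right-hand side of (\ref{eq tilde Y rank r}) is negative at $\hat{u} = 0$. I would first observe that the interval $\{\hat{E} < \frac{1}{c_0 m}\}$ is forward invariant: wherever it holds, $\hat{E}$ is strictly decreasing, so by continuity $\hat{E}$ cannot return to the threshold $\frac{1}{c_0 m}$. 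Inside this region (\ref{eq tilde Y rank r}) simplifies to $\frac{d\hat{E}}{d\hat{u}} \leq -\frac{\hat{E}^2}{m}$. Separating variables and integrating gives $\frac{1}{\hat{E}(\hat{u})} \geq \frac{1}{\hat{E}(0)} + \frac{\hat{u}}{m} \geq \frac{\hat{u}}{m}$, hence $\hat{E}(\hat{u}) \leq \frac{m}{\hat{u}}$. Since each $\hat{Y}_i \geq 0$ (positivity of $\hat a$ and $b_i$ is preserved as long as the flow exists) and $\hat{Y}_i \leq \hat{E}$, the same bound and the limit $\hat{E}(\hat{u}) \to 0$ carry over. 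Compactness of $\{\hat{E} \leq \hat{E}(0)\}$ then forces $\hat{u}_* = \infty$, yielding (i).

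For (ii), the aim is to show the $\tau$-interval of existence is also $[0,\infty)$ and that no component of the metric degenerates. The relation $d\tau = \hat{a}\, d\hat{u}$ together with $\frac{d\hat{a}}{d\tau} = \tr(HVH) \geq 0$ gives $\hat{a}(\tau) \geq \hat{a}(0) > 0$, so $\tau(\hat{u}) \geq \hat{a}(0)\, \hat{u} \to \infty$. The uniform bound $\hat{Y}_i \leq \hat{E}(0)$ translates into $b_i = \hat{a}/\hat{Y}_i \geq \hat{a}(0)/\hat{E}(0) > 0$, so the base factors stay non-degenerate. Combined with Lemma \ref{lem T r easy conseq}(ii)--(iii), which controls $H(\tau)$ from above and below, one obtains a genuine smooth solution of (\ref{eq RF torus r A1})--(\ref{eq RF torus r A2}) on all of $[0,\infty)$, i.e., an ancient solution of the Ricci flow on $P_Q$.

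Part (iii) should drop out next. The upper bound $b_i(\tau) \leq 2p_i \tau + b_i(0)$ is already Lemma \ref{lem T r easy conseq}(i). For the lower bound, since $H$ is positive definite with operator norm bounded by $\tr H = \hat{a}$, one has
\[
h(Q^{(i)}, Q^{(i)}) \leq \hat{a}\, \|Q^{(i)}\|^2 = \hat{a}\, c_i,
\]
so the negative term in (\ref{eq RF torus r A2}) satisfies
\[
\frac{h(Q^{(i)}, Q^{(i)})}{b_i} \leq c_i \hat{Y}_i \leq c_0\, \hat{E}(\hat{u}) \leq c_0\, \hat{E}(0) < \frac{1}{m},
\]
using $c_i \leq c_0$ from (\ref{eq def c0 need}). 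Substituting into (\ref{eq RF torus r A2}) and integrating yields the claimed $b_i(\tau) \geq \left(2p_i - \frac{1}{m}\right)\tau + b_i(0)$.

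The main obstacle is the bootstrap in part (i): in principle the cubic term in (\ref{eq tilde Y rank r}) could drive $\hat{E}$ back up through $\frac{1}{c_0 m}$, but the strict sign of $\frac{d\hat{E}}{d\hat{u}}$ at $\hat{u}=0$ together with continuity of the solution rules this out cleanly. Beyond this, the argument is a scalar comparison and routine bookkeeping between the two time variables $\tau$ and $\hat{u}$.
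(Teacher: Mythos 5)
Your treatment of the scalar inequality (\ref{eq tilde Y rank r}) --- forward invariance of $\{\hat{E}<\frac{1}{c_0 m}\}$ and integration of $\frac{d\hat E}{d\hat u}\le -\frac{1}{m}\hat E^2$ to get $\hat E(\hat u)\le m/\hat u$ --- matches the paper exactly, as does the derivation of the lower bound in (iii). But there is a genuine gap in the global existence step. The ODE system in the $\hat u$-variable is the \emph{coupled} system (\ref{fibre-eq}) and (\ref{eq hat Y v f}) for the pair $(H(\hat u),\hat Y(\hat u))$; the set $\{\hat E\le\hat E(0)\}$ constrains only the $\hat Y$-component of the phase space. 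Compactness of that set does not force $\hat u_*=\infty$ --- you also need to rule out $\|H(\hat u)\|\to\infty$ as $\hat u\to\hat u_*^-$, and none of the estimates you invoke provide an \emph{upper} bound on $H$. Indeed, Lemma \ref{lem T r easy conseq}(ii) bounds the entries of $H^{-1}$ (equivalently, bounds $H$ from below), and (iii) again bounds $H$ from below; you cite them as controlling $H$ ``from above and below,'' which misreads the lemma. The paper closes this gap by switching back to $\tau$-time: it writes $\hat a(\tau)=\hat Y_i(\tau)\,b_i(\tau)\le\frac{1}{c_0 m}(2p_i\tau+b_i(0))$, observes that $\hat u_*<\infty$ iff $\tau_*<\infty$ via the logarithmic lower bound for $\hat u(\tau)$, and then shows that finiteness of $\tau_*$ would contradict extendibility because $H(\tau)$ and $b_i(\tau)$ stay bounded on $[0,\tau_*)$. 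Alternatively one could work directly in $\hat u$-time using (\ref{eq-ahat}) and (\ref{coeff-bd2}) to get $\frac{d\hat a}{d\hat u}\le \hat a\sum_i n_i c_i\hat Y_i^2\le C\hat a$ with $C$ depending only on $\hat E(0)$, so $\hat a$ (and hence $\|H\|\le\hat a$) grows at most exponentially and stays bounded on any finite $\hat u$-interval; together with the lower bound from Lemma \ref{lem T r easy conseq}(iii) this gives the required compactness of the $(H,\hat Y)$-orbit. Either way, the $H$-bound is a necessary ingredient and must be supplied.
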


\begin{proof}
  We begin with the system  (\ref{fibre-eq}) and (\ref{eq hat Y v f}) of $(H(\hat{u}), \hat{Y}(\hat{u}))$
and the initial data which satisfy  $\hat{E}(\hat{Y}(0)) < \frac{1}{c_0m}$.
Let $[0, \hat{u}_*)$ be the maximal interval on which the solution exists.
Since the above system is real analytic in $H$ and $\hat{Y}$,
it follows that ${\hat Y}_i, 1 \leq i \leq m$, remain positive on $[0, \hat{u}_*)$.
Equation (\ref{eq-ahat}) shows that $\hat{a}(\hat{u})$ is an increasing function,
so $\hat{a}(\hat u)$ is positive on $[0, \hat{u}_*)$.
We can recover the independent variable $\tau$ by
\[
 \tau({\hat u}) = \int_0^{{\hat u}} \, {\hat a}(\zeta)\, d\zeta.
\]
This enable us to recover the matrix $H(\tau)$  and the functions $b_i(\tau)$
from $(H(\hat{u}), \hat{Y}(\hat{u}))$, which then satisfy
the system (\ref{eq RF torus r A1}) and (\ref{eq RF torus r A2}).
This implies that $H$ is positive definite, either by invoking properties of the Ricci flow
or by examining the $\operatorname{ODE}$ for $\det H$, which shows that $\det H$ \
is increasing in $\hat{u}$.
We have shown the correspondence between the solution $(H(\hat{u}) , \hat{Y} (\hat{u}) )$ and
solution  $H(\tau)$ and $b_i(\tau)$.

(i) and (ii) By our assumption on ${\hat E}({\hat Y}(0))$, it follows from inequality
(\ref{eq tilde Y rank r}) that $\hat{E}(\hat{Y}(\hat{u}))$ is a strictly decreasing function
on $[0,{\hat u}_*)$. In particular, we have  $\hat{Y}_i (\hat{u}) < \frac{1}{c_0m}$ on
$[0,{\hat u}_*)$ for each $i$.
 Let $\tau_* \doteqdot  \lim_{{\hat u} \rightarrow {\hat u}_*} \tau({\hat u})$.
 By Lemma \ref{lem T r easy conseq}(i)
we get
\begin{equation}
\hat{a}(0) \leq \hat{a}(\tau) =\sum_{\alpha =1}^r h_{\alpha \alpha} (\tau) \leq \frac{1}{c_0m}
 (2p_i \tau + b_i(0)) \quad \text{ for each } i \text{ and } \tau \in [0.\tau_*). \label{eq est of a hat}
\end{equation}
  Fix an index $i$.  By (\ref{eq est of a hat}) and (\ref{eq def hat u var}) we have
\[
\hat{u} (\tau) \geq \int_0^\tau \frac{c_0m}{2p_i \zeta +b_i(0) }\, d \zeta
= \frac{c_0m}{2p_i} \left ( \ln ( 2p_i \tau +b_i(0))- \ln b_i(0) \right).
\]
Hence it follows that ${\hat u}_*$ is finite if and only if ${\tau}_*$ is finite.
 We need to rule out the possibility that $\tau_*$ is finite.

 We assume below that $\tau_*$ is finite.
Since $(h_{\alpha \beta})$ is a positive definite matrix, we conclude from (\ref{eq est of a hat}) that
$|h_{\alpha \beta}(\tau)| \leq \frac{1}{c_0m}  (2p_i \tau_* + b_i(0))$ for all $\alpha$ and $\beta$.
By Lemma  \ref{lem T r easy conseq}(i)  $b_i(\tau) $ is bounded on $[0, \tau_*)$.
By the extendibility theory of $\operatorname{ODE}$ systems applied to $($\ref{eq RF torus r A1}$)$
and $($\ref{eq RF torus r A2}$)$,  solution $(H(\tau), b_i(\tau))$  can be continued beyond
time $\tau_*$, a contradiction.

  The remaining assertions in (i) follow from  integrating (\ref{eq tilde Y rank r}).
  Indeed, applying $\hat{E}(\hat{Y}(\hat{u})) <\hat{E}(\hat{Y}(0))< \frac{1}{c_0m}$
  leads to the inequality  $\frac{d{\hat E}}{d {\hat u}} \leq -\frac{1}{m} {\hat E}^2$,
which yields
$$ 0 < {\hat E}(\hat u)  \leq \frac{m}{{\hat u} + m ({\hat E}(0))^{-1}}  \leq \frac{m}{\hat u}.$$

(iii) \, The upper bound is just Lemma \ref{lem T r easy conseq}(i).
Applying the bound ${\hat Y}_i < \frac{1}{mc_0}$ to (\ref{eq RF torus r A2})
together with (\ref{coeff-bd1}), we have
\begin{equation}
\frac{d b_i}{d \tau}  \geq 2 p_i - \frac{c_i}{m c_0}  \geq 2p_i - \frac{1}{m}, \label{eq inq 4.18a}
\end{equation}
since $c_i \leq c_0$. Integrating this inequality then gives the desired lower bound.
\end{proof}

\begin{corollary} \label{cor h alpha beta bi est}
Under the same assumptions as in Theorem \ref{prop T r bundle over KE manif ancient sol}
we have

\smallskip
\noindent{$($i$)$} $\hat{a}(\tau) = \sum_{\alpha =1}^r h_{\alpha \alpha}(\tau)$ is a bounded
strictly increasing function on $[0, \infty)$;

\noindent{$($ii$)$} the metrics $h(\tau)$ converge, as $\tau \rightarrow \infty$,
to a left-invariant metric $h_*$ on $T^r$.
\end{corollary}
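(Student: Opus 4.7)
The plan is to establish part (i) by deriving a differential inequality for $\ln \hat{a}$ along the $\hat{u}$-flow and integrating it against the $L^2$-in-$\hat{u}$ decay of $\hat{E}$ already implicit in the proof of Theorem \ref{prop T r bundle over KE manif ancient sol}(i); part (ii) will then follow from the flow equation (\ref{eq 4.6}) combined with the linear growth of the $b_i$ from Theorem \ref{prop T r bundle over KE manif ancient sol}(iii).

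For (i), strict monotonicity of $\hat{a}(\tau) = \tr H(\tau)$ is immediate from
\[
\frac{d\hat{a}}{d\tau} = \tr(HVH) > 0,
\]
since $H(\tau)$ is positive definite and $V = Q D Q^T$ with $D = \operatorname{diag}(n_i/b_i^2)$ is positive definite (because $Q$ has rank $r$ by the non-degeneracy assumption). For the upper bound, I would use the operator-norm estimate $\|H\|_{\mathrm{op}} \leq \tr H = \hat{a}$ together with $\tr V = \sum_i n_i c_i/b_i^2$ to get
\[
\tr(HVH) \leq \hat{a}^2\, \tr V = \sum_{i=1}^m n_i c_i \hat{Y}_i^2,
\]
and then, using $d\hat{u}/d\tau = 1/\hat{a}$,
\[
\frac{d\ln \hat{a}}{d\hat{u}} = \tr(HVH) \leq C_1 \hat{E}(\hat{Y})^2,
\]
for $C_1 = \max_i n_i c_i$. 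The differential inequality $d\hat{E}/d\hat{u} \leq -\hat{E}^2/m$ from the proof of Theorem \ref{prop T r bundle over KE manif ancient sol} integrates to $\hat{E}(\hat{u}) \leq m/(\hat{u} + m/\hat{E}(0))$, yielding
\[
\int_0^\infty \hat{E}(\hat{u})^2\, d\hat{u} \leq m\,\hat{E}(0) < \infty.
\]
Hence $\ln \hat{a}(\hat{u})$ is uniformly bounded above on $[0,\infty)$, which proves (i).

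For (ii), the flow equation (\ref{eq 4.6}) together with $\|H\|_{\mathrm{op}} \leq \hat{a}$ gives
\[
\left\|\frac{dH}{d\tau}\right\|_{\mathrm{op}} \leq \hat{a}(\tau)^2\, \|V(\tau)\|_{\mathrm{op}}.
\]
Since the $p_i$ are positive integers and $m \geq 1$, Theorem \ref{prop T r bundle over KE manif ancient sol}(iii) gives $b_i(\tau) \geq \tau + b_i(0)$, and hence $\|V(\tau)\|_{\mathrm{op}} \leq C_2/(\tau+1)^2$ for some $C_2$ depending on the initial data and the entries of $Q$. Combined with the uniform bound on $\hat{a}$ from (i), this gives $\|dH/d\tau\|_{\mathrm{op}} \leq C_3/(\tau+1)^2$, which is integrable on $[0,\infty)$. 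Therefore $H(\tau)$ converges entrywise to some symmetric matrix $h_*$ as $\tau \to \infty$. Finally, Lemma \ref{lem T r easy conseq}(iii) provides a uniform lower bound $(h_{\alpha\beta}(\tau)) \geq c\, I_{r \times r}$, so the limit satisfies $h_* \geq c\, I_{r \times r}$, which makes $h_*$ a genuine left-invariant Riemannian metric on $T^r$. The main technical step is the $L^2$-integrability of $\hat{E}$ needed in (i); once (i) is available, (ii) reduces to a routine integrability argument.
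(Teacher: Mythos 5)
Your proof is correct, and both parts reach the same estimates as the paper, but you organize them somewhat differently. For (i), the underlying inequality is the same: $\frac{d\ln\hat{a}}{d\hat{u}} = \tr(HVH) \leq \sum_i n_i c_i \hat{Y}_i^2$, which is the content of the paper's combination of (\ref{eq-ahat}) and (\ref{coeff-bd2}); you then finish by integrating $\hat{E}^2$ over $[0,\infty)$ (bounding it by $m\hat{E}(0)$ via the differential inequality $\hat{E}' \leq -\hat{E}^2/m$), whereas the paper substitutes the pointwise decay $\hat{E}\leq m/\hat{u}$ and integrates $\hat{u}^{-2}$. Your $L^2$ route is slightly cleaner near $\hat{u}=0$, where the paper's $\hat{u}^{-2}$ bound requires implicitly truncating to $[\hat{u}_0,\infty)$. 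For (ii) the routes genuinely diverge in execution: the paper works in the $\hat{u}$ variable, shows $\langle H(X),X\rangle$ is monotone increasing and then bounded via the same kind of log-differential inequality (using the lower bound $H \geq cI$ from Lemma \ref{lem T r easy conseq}(iii) in the middle of the argument); you instead work directly in $\tau$, show $\|dH/d\tau\|_{\mathrm{op}} \leq \hat{a}_*^2\,\|V(\tau)\|_{\mathrm{op}}$ is integrable on $[0,\infty)$ (using the linear growth of $b_i$ from Theorem \ref{prop T r bundle over KE manif ancient sol}(iii), which indeed gives $2p_i - 1/m \geq 1$), so $H(\tau)$ is Cauchy, and invoke Lemma \ref{lem T r easy conseq}(iii) only at the end to conclude the limit is nondegenerate. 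Your route avoids the monotonicity step and gives convergence of the full tensor in one shot rather than component-by-component in $X$; the paper's route has the small advantage of exhibiting $\langle H(X),X\rangle$ as monotone, which is extra structural information, but is not needed for the corollary as stated.
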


\begin{proof}
(i) We already know that $\hat a$ is an increasing function of $\tau$.
Combining (\ref{eq-ahat}) and (\ref{coeff-bd2}) with
Theorem \ref{prop T r bundle over KE manif ancient sol}(i)
we obtain
$$ \frac{1}{\hat a} \frac{d{\hat a}}{d {\hat u}} \leq r \, \sum_{i=1}^m \, n_i \,\frac{m^2}{{\hat u}^2}\, c_i.$$
Setting ${\hat c} = r m^2 \sum_{i=1}^m \, n_i c_i$, we see that $\hat a$ satisfies the differential inequality
\begin{equation} \label{bd-ineq}
\frac{1}{\phi} \frac{d \phi}{d\hat u} \leq {\hat c}\, {\hat u} ^{-2},
\end{equation}
whose positive solutions are bounded from above, as can be seen upon integration.
Let ${\hat a}_*$ denote the limit of ${\hat a}(\hat u)$ as $\hat u$ tends to $\infty$.

(ii) To study the convergence of the metrics $h(\tau)$, let $X$ denote an arbitrary unit
vector in $\R^r$. We shall define $h_*(X, X) \doteqdot \langle H_*(X), X \rangle$ as
$ \lim_{\hat{u}\rightarrow \infty} \langle H({\hat u})(X), X \rangle $
and then extend $h_*$ to a symmetric bilinear form in the usual way by polarization
and homothety. To justify this definition, we consider
\begin{eqnarray*}
\frac{d}{d{\hat u}} \langle H(\hat{u}) (X), X \rangle &=& \left\langle \frac{dH}{d{\hat u}}(X), X \right\rangle \\
            & = & {\hat a}(\hat u) \langle HVH(X), X \rangle \\
            & = & {\hat a}(\hat u) \langle V(HX), HX \rangle > 0,
\end{eqnarray*}
since $V$ is positive definite and ${\hat a}(\hat u) > 0$. It remains to show that
$\langle H(X), X \rangle$ is bounded from above.

Now
\begin{eqnarray*}
(HVH)_{\alpha \beta} &=& \frac{1}{{\hat a}^2} \sum_{\tilde{\alpha}, \tilde{\beta}} \sum_i \, h_{\alpha
\tilde{\alpha}} q_{\tilde{\alpha} i}
          q_{\tilde{\beta} i} n_i h_{\tilde{\beta} \beta} {\hat Y}_i^2  \\
       &\leq & \frac{1}{{\hat a}^2} \frac{m^2}{{\hat u}^2}  \sum_i \, (HQ)_{\alpha i} (HQ)_{\beta i} n_i \\
       & \leq & \frac{m^2}{{\hat u}^2} \sum_i \, c_i n_i,
\end{eqnarray*}
where we have used Theorem \ref{prop T r bundle over KE manif ancient sol}(i) and (\ref{coeff-bd2})
in the second and third line of the above computation, respectively. In other words,
$ \| HVH \| \leq {\hat c} {\hat u}^{-2} $ for some constant $\hat{c}$.
It follows that the function $\phi = \langle H(X), X \rangle$ satisfies
\[
 \frac{1}{\phi} \frac{d \phi}{d {\hat u}} \leq \frac{{\hat a}(\hat u)}{\langle H(X), X \rangle}\,
{\hat c} {\hat u}^{-2}.
\]
But ${\hat a}(\hat u)$ is  bounded from above by ${\hat a}_*$ while $\langle H(X), X \rangle \geq c >0$
 by Lemma \ref{lem T r easy conseq}(iii). So $\phi$ also satisfies a
differential inequality of type (\ref{bd-ineq}) (with a different constant)  and hence
$\langle H(X), X \rangle$ is bounded from above. It follows that the metrics $h(\tau)$ converge
 to a limit left-invariant metric.
\end{proof}

In order to examine the behavior of the curvature along the ancient solutions in Theorem
\ref{prop T r bundle over KE manif ancient sol}, we proceed as in \S 3.3,
using the fact that for each fixed $\tau$ our metrics make $P_Q$ into a Riemannian submersion
onto the base with totally geodesic fibres. Since the computations are similar we shall be
brief and only indicate the necessary changes.

Note first that any left-invariant metric on $T^r$ is automatically right-invariant
and has zero curvature. Therefore the components of the curvature tensor that need
to be computed are
$$ g(R_{X,U}(Y), V), \,\,g(R_{U,V}(X), Y), \,\,g(R_{X,Y}(Z), W)$$
where $X, Y, Z, W$ denote horizontal basic vectors and $U, V$ denote vertical
vectors. As before the components $g(R_{X,Y}(Z), U)$ are zero.

We need to choose an orthonormal basis $\{U_1, \cdots, U_r \}$ of $h(\tau)$ for
our computations. A convenient choice is to let
$$ U_{\alpha} \doteqdot \sum_{\beta =1}^r \, B^{\beta \alpha} e_{\beta}$$
where $B$ is a square root of $H$. Since $H^{-1} = (B^{-1})^2$ and
$$\tr(H^{-1}(\tau)) \leq \tr(H^{-1}(0)) \doteqdot \nu_0^2$$
as a result of the equation $\frac{d}{d\tau} H^{-1} = -V$, it follows that
$$ | B^{\alpha \beta}| \leq \|B^{-1} \| \leq \nu_0.$$
We complete $\{U_1, \cdots, U_r \}$ to an orthonormal frame for $P_Q$
by adding the adapted horizontal orthonormal basis
$\{ \tilde{e}_j^{(i)} = \frac{1}{\sqrt{b_i}}\, e_j^{(i)}, 1 \leq j \leq 2n_i, 1 \leq i \leq m\}$.

We will estimate the curvature tensor of  $g_{a, \vec{b}} (\tau)$ using the formulas
in \cite[Theorem 9.28]{Bes87}.
We now consider the terms $g(\nabla_{U_{\alpha}} A)_{X}Y, U_{\beta})$, which occur
in both $g(R_{X,U_{\alpha}}(Y), U_{\beta})$ and $g(R_{U_{\alpha},U_{\beta}}(X), Y)$.
By the computation on p. 243 of \cite{WZ90} and the fact that we have toral fibres,
it suffices to analyse terms of the form $g({\mathscr L}_{U_{\alpha}}(X), {\mathscr L}_{U_{\beta}}(Y))$
where the skew-adjoint operator ${\mathscr L}_{U}$ is defined by
$$ g({\mathscr L}(X), Y) = \frac{1}{2} \,h( \sigma(U), F(X, Y)).$$
(Recall that $\sigma, F$ are respectively the connection and curvature forms
of the bundle.) Because the KE factors in the base are orthogonal with respect
to $F$ we may let $X = \tilde{e}_k^{(i)}$ and $Y = \tilde{e}_{\ell}^{(i)}$
both of which are tangent to $M_i$. Then
\begin{eqnarray*}
g({\mathscr L}_{U_{\alpha}} (X), {\mathscr L}_{U_{\beta}}(Y)) &=&
     \frac{1}{4}\, \sum_{j=1}^{2n_i} \, h(U_{\alpha}, F(X, {\tilde e}_j^{(i)}))\, h(U_{\beta},
      F(Y, {\tilde e}_j^{(i)})) \\
     &=& \frac{1}{4 b_i} \sum_{\tilde{\alpha}, \tilde{\beta} } \sum_{j=1}^{2n_i} h(U_{\alpha}, e_{
     \tilde{\alpha} }) h(U_{\beta}, e_{\tilde{\beta}})
              \, q_{\tilde{\alpha} i}\, q_{\tilde{\beta} i} \,\omega_i(X, e_j^{(i)}) \omega_i(Y, e_j^{(i)}) \\
     &=& \frac{\delta_{k \ell}}{4b_i^2}  \sum_{\tilde{\alpha}, \tilde{\beta}} \, q_{\tilde{\alpha} i}\,
     q_{\tilde{\beta} i} \,
               h(U_{\alpha}, e_{\tilde{\alpha}}) h(U_{\beta}, e_{\tilde{\beta}}).
\end{eqnarray*}
But
\begin{equation*}
  \left| \sum_{\tilde{\alpha}, \tilde{\beta}} \, q_{\tilde{\alpha} i}\, q_{\tilde{\beta} i} \,
               h(U_{\alpha}, e_{\tilde{\alpha}}) h(U_{\beta}, e_{\tilde{\beta}}) \right|
          \leq (\nu_0)^2 \| H \|^2 r^2 \left( \sum_{\tilde{\alpha}} \, |q_{\tilde{\alpha} i}|  \right)^2,
\end{equation*}
and $\|H \|$ is bounded above by ${\hat a}_*$, so we conclude that
$$ \left| g( (\nabla_{U_{\alpha}} A)_{\tilde{e}_k^{(i)}} \tilde{e}_{\ell}^{(i)}, U_{\beta}) \right|
 \sim O(\tau^{-2}).$$

Similarly, we also have
$$ \left| g(A_{\tilde{e}_k^{(i)}} U_{\alpha}, A_{\tilde{e}_{\ell}^{(i)}} U_{\beta})\right| \sim O(\tau^{-2}) \,\,
      \mbox{and} \, \left| g(A_{X}Y, A_{Z} W)  \right|  \sim O(\tau^{-2})  $$
for $X, Y, Z, W$ chosen from the above orthonormal basis. Furthermore, as in \S 3.3,
the components of the curvature tensor of the base (with respect to the metric
$\sum_i  b_i g_i$) decay asymptotically as $O(\tau^{-1})$. Hence the norm of the
curvature tensor of our metrics decay like $O(\tau^{-1})$, as in the case of the
 circle bundles.

\begin{theorem} \label{curv--toruscase}
Under the assumptions of Theorem \ref{prop T r bundle over KE manif ancient sol} we have

\smallskip
\noindent $($i$)$ the ancient  solution $g_{h, \vec{b}} (\tau)$  on
bundle $P_Q$ is  of type I as $\tau \rightarrow \infty$,  i.e., there is a constant $C <\infty$
such that for $\tau \geq 0$
\[
\tau \cdot \sup_{x \in P_Q}|\operatorname{Rm}_{g_{h, \vec{b}} (\tau)} (x) |_{g_{h, \vec{b}} (\tau)} \leq C;
\]

\noindent $($ii$)$ as $\tau \rightarrow \infty$, the rescaled metric tensors $ \tau^{-1} g_{h, \vec{b}}(\tau)$
 on bundle $P_Q$ collapse $($in the Gromov-Hausdorff topology$)$ to the Einstein product metric
$2 \sum_i \,p_i g_i$ on the base $M_1 \times \cdots \times M_m$;

\noindent $($iii$)$  for any $\kappa >0$, the solution $g_{h, \vec{b}} (\tau)$ is not $\kappa$-noncollapsed
at all scales.

\noindent $($iv$)$ for each $\tau$, the metric $g_{h, \vec{b}} (\tau)$ has positive Ricci curvature.
\end{theorem}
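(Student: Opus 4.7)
The plan is to establish the four parts in sequence, each building on Theorem \ref{prop T r bundle over KE manif ancient sol}, Corollary \ref{cor h alpha beta bi est}, and the component-wise curvature estimates derived immediately before the theorem statement.

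For part (i), I would simply collect the curvature estimates just established. Since $b_i(\tau) \sim \tau$ (by Theorem \ref{prop T r bundle over KE manif ancient sol}(iii)) while $H(\tau)$ remains bounded (by Corollary \ref{cor h alpha beta bi est}), the O'Neill formulas give $|A_X Y|^2,\, |A_X U_\alpha|^2 \sim O(\tau^{-2})$ and $g((\nabla_{U_\alpha}A)_X Y, U_\beta) \sim O(\tau^{-2})$, while the base curvature contribution to $g(R_{X,Y}Z,W)$ scales as $O(\tau^{-1})$. Harmonicity of the curvature form kills the remaining O'Neill terms, exactly as in \S 3.3. Summing squared components in an orthonormal frame adapted to the Riemannian submersion then yields $|\operatorname{Rm}_{g_{h,\vec{b}}(\tau)}|^2 \sim O(\tau^{-2})$, which is the type I bound.

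For part (ii), I would exploit the Riemannian submersion structure directly. The torus fibres carry the metric $h(\tau)$, which converges to a bounded left-invariant metric $h_*$ by Corollary \ref{cor h alpha beta bi est}, so $\tau^{-1} h(\tau) \to 0$ and the fibres collapse. On the base, Theorem \ref{prop T r bundle over KE manif ancient sol}(iii) together with $\hat{Y}_i(\hat{u}) \to 0$ (which, through (\ref{eq RF torus r A2}) and (\ref{coeff-bd1}), forces $db_i/d\tau \to 2p_i$) gives $\tau^{-1} b_i(\tau) \to 2p_i$. Hence the rescaled metrics collapse in Gromov--Hausdorff topology to the Einstein product metric $2\sum_i p_i g_i$ on $M_1 \times \cdots \times M_m$. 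For part (iii), I would compute volumes: with $n = r + 2\sum_i n_i$,
$$\operatorname{Vol}_{\tau^{-1} g_{h, \vec{b}}(\tau)}(P_Q) = \tau^{-n/2}\, \operatorname{Vol}_{g_{h, \vec{b}}(\tau)}(P_Q) \sim \tau^{-n/2}\, \sqrt{\det H(\tau)} \prod_i b_i(\tau)^{n_i} \sim \tau^{-r/2} \to 0,$$
since $\det H(\tau)$ is bounded above while $b_i(\tau) \sim \tau$. Combined with the bounded curvature of the rescaled metric from part (i), this precludes $\kappa$-noncollapsing at all scales, exactly as in the proof of Theorem \ref{prop asym beha at infty}(ii).

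For part (iv), which is the most delicate, I would use the explicit Ricci tensor components (\ref{Ricci-toral-components}). The vertical-vertical block equals $\tfrac{1}{2} HVH$, which is positive definite because $V$ is positive definite (as the Gram matrix of the $v_\alpha$ from (\ref{V-defn}), which span $\R^r$ by the non-degeneracy of $Q$) and $H$ is invertible; this supplies positive Ricci in all vertical directions. The horizontal Ricci coefficient on the $M_i$ factor is
$$\frac{1}{b_i}\left(p_i - \frac{h(Q^{(i)}, Q^{(i)})}{2 b_i}\right) = \frac{1}{b_i}\left(p_i - \frac{\hat{Y}_i}{2} \cdot \frac{h(Q^{(i)}, Q^{(i)})}{\hat a}\right).$$
By (\ref{coeff-bd1}), $h(Q^{(i)}, Q^{(i)})/\hat a \leq c_i \leq c_0$, and by Theorem \ref{prop T r bundle over KE manif ancient sol}(i), $\hat{Y}_i < 1/(c_0 m)$; hence this coefficient is bounded below by $b_i^{-1}(p_i - 1/(2m))$, which is strictly positive since $p_i \geq 1$ and $m \geq 2$. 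The main obstacle is that (iv) demands positivity for \emph{every} $\tau \in [0,\infty)$, not merely asymptotically: this hinges on the uniform bound $\hat{Y}_i < 1/(c_0 m)$, which is preserved precisely because $\hat{E}$ is monotonically decreasing by (\ref{eq tilde Y rank r}), and this monotonicity is itself tied directly to the initial data hypothesis in Theorem \ref{prop T r bundle over KE manif ancient sol}.
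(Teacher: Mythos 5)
Your proposal matches the paper's proof in all four parts: (i) assembles the $O(\tau^{-1})$ and $O(\tau^{-2})$ curvature estimates from the discussion preceding the theorem, (ii) and (iii) argue collapse of the rescaled metrics and use the volume decay $\tau^{-r/2}$ exactly as in Theorems \ref{lem growth a bi at 0}(iii) and \ref{prop asym beha at infty}(ii), and (iv) combines the positivity of $\tfrac{1}{2}HVH$ with the bound $h(Q^{(i)},Q^{(i)})/b_i \leq 1/m$ obtained from $\hat{Y}_i < 1/(mc_0)$ and (\ref{coeff-bd1}) (equivalently, the estimate used to derive (\ref{eq inq 4.18a})). The only cosmetic difference is that the paper states the final bound $p_i - \tfrac{1}{2m} > 0$ without invoking $m \geq 2$ (it holds for all $m \geq 1$ since $p_i \geq 1$); otherwise the argument is the same.
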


\begin{proof}
Part (i) has already been proved in the discussion preceding Theorem \ref{curv--toruscase}.
The proofs of (ii) and (iii) are analogous to those for the  circle bundle case
 except that the volume decay rate for the metrics $ \tau^{-1} g_{h, \vec{b}}(\tau)$ is now ${\tau}^{-r/2}$.
See the proof of Theorem \ref{lem growth a bi at 0}(iii) and Theorem \ref{prop asym beha at infty} (ii),
 respectively.

To see (iv), from (\ref{Ricci-toral-components}) and (\ref{eq 4.6}) the toral components are
given by $\frac{1}{2} HVH$.
This is positive definite since $V$  is positive definite  by our non-degeneracy assumption.

  For the base components given in (\ref{eq Ricci curv circle bundle}),  since $\hat{Y}_i < \frac{1}{mc_0}$,
  we compute
 \begin{align*}
& \sum_{i=1}^m \left ( \frac{ p_i }{b_i(\tau)} - \sum_{\tilde{\alpha},\tilde{\beta}=1}^r \frac{1}{2}
 q_{\tilde{\alpha} i}q_{\tilde{\beta} i} \frac{ h_{\tilde{\alpha} \tilde{\beta}}(\tau)}
{b_i^2(\tau)} \right) b_i(\tau) g_i \\
=&  \sum_{i=1}^m \left (p_i  - \sum_{\tilde{\alpha},\tilde{\beta}=1}^r \frac{1}{2}
 q_{\tilde{\alpha} i}q_{\tilde{\beta} i} \frac{ h_{\tilde{\alpha} \tilde{\beta}}(\tau)}
{b_i(\tau)} \right) g_i \\
\geq &  \sum_{i=1}^m \left (p_i  -\frac{1}{2m} \right ) g_i >0,
\end{align*}
where we used the proof of (\ref{eq inq 4.18a}) to get the last inequality.
\end{proof}

\begin{rmk} \label{Ricci-scal-toruscase}
The asymptotic behaviors of the Ricci and scalar curvatures are exactly as in
the circle bundle case. For the convenience the reader we include the formula for
scalar curvature here.
\begin{eqnarray*}
 R_{g_{h, \vec{b}} (\tau)} &=& \frac{1}{\hat a} \left( 2 \sum_{i=1}^m \, n_i p_i {\hat Y}_i
       - \frac{1}{2}\, \sum_{i=1}^m \, n_i \frac{h(Q^{(i)}, Q^{(i)}) }{\hat a} {\hat Y}_i^2  \right).
\end{eqnarray*}
\end{rmk}

\begin{rmk}  \label{topology-toruscase}
As in the circle bundles case, if we take $2$-torus bundles over a product of
three complex projective spaces of different dimensions, we obtain infinitely many
homotopy types in even dimensions starting from dimension $14$. If the dimensions are
all equal but $> 1$, then the non-degenerate $2$-torus bundles have the same integral
cohomology ring but fall into infinitely many homeomorphism types. More information
can be found in \cite{WZ90}. Thus diverse topological properties are also observed
among the ancient solutions constructed in this section.
\end{rmk}

%%%%%%%%%%%%%%%%%%%%%%%%%%%%%%%%%%%%%%%%%%%%
%%%%%%%%%%%%%%%%%%%%%%%%%%%%%%%%%%%%%%%%%%%%
\section{Appendix: Eigenvalues of sum of a diagonal matrix and a rank one matrix} \label{sec 5 appendix}

The following algebraic lemma is used to estimate the eigenvalues of $\mathcal{L}_{\xi}$ in Lemma
\ref{lem newly minted Feb 20}. It may be the case that some experts know this fact.

\begin{lemma} \label{lem eigenvalue ai matrix and diag}
Consider the matrix
\[
A = \left [ \begin{array}{cccc}
 \epsilon_1 a_1 & 0   & \cdots  & 0  \\
0  & \epsilon_2a_2  & \cdots  & 0   \\
\vdots & \vdots & \vdots  & \vdots \\
0  & 0  & \cdots &  \epsilon_ma_m
 \end{array} \right ] +
  \left [ \begin{array}{cccc}
 a_1 & a_2   & \cdots  & a_m  \\
a_1 & a_2   & \cdots  & a_m     \\
\vdots & \vdots & \vdots  & \vdots \\
a_1 & a_2   & \cdots  & a_m
 \end{array} \right ]
\]
where each $a_i>0$ and $\epsilon_i >0$.
After a suitable permutation we may assume that $\epsilon_1 a_1 \geq \cdots \geq \epsilon_m a_m$.

\noindent{$($i$)$} If $\epsilon_ia_i$ are all distinct, then the eigenvalues $\lambda_i$ of $A$ are positive and distinct.
Assume $\lambda_1 > \cdots > \lambda_m$, then we have estimates
\begin{align}
& \epsilon_i a_i <\lambda_i < \epsilon_{i-1} a_{i-1} ~~ \text{ for } i=2, \cdots m,
    \label{eq lambda two est getby} \\
& \min_{i} \{ \epsilon_i a_i \} + \sum_{i=1}^m a_i \leq \lambda_1 \leq \max_{i} \{ \epsilon_i
a_i \} + \sum_{i=1}^m a_i.  \label{eq lambda one est Froben}
\end{align}
Furthermore, there is an eigenvector corresponding to $\lambda_1$ with all positive entries.

\noindent{$($ii$)$} Suppose that we have $\epsilon_1a_1 = \cdots = \epsilon_{k_1}a_{k_1}
\doteqdot c_1; \cdots; \epsilon_{k_1+ \cdots +k_{l-1}+1} a_{k_1+ \cdots +k_{l-1}+1} = \cdots
=\epsilon_{k_1+ \cdots +k_{l}} a_{k_1+ \cdots +k_{l}}=\epsilon_m a_m \doteqdot c_l$, where the $c_i$ are
distinct values of multiplicity $k_i$.
Let $k_0 =0$. Then  the eigenvalues of $A$ are the following: $c_j$ of multiplicity $k_j-1$ for $j=1, \cdots, l$,
 and the eigenvalues of the matrix
\[
\tilde{A} \doteqdot \left [ \begin{array}{ccc}
 \tilde{\epsilon}_1 \tilde{a}_1 & \cdots  & 0 \\
\vdots & \vdots  & \vdots \\
0   & \cdots  & \tilde{\epsilon}_l \tilde{a}_l
 \end{array} \right ] +
  \left [ \begin{array}{ccc}
 \tilde{a}_1    & \cdots  & \tilde{a}_l  \\
\vdots  & \vdots  & \vdots \\
\tilde{a}_1    & \cdots  & \tilde{a}_l
 \end{array} \right ]
 \]
where $\tilde{a}_j \doteqdot  \sum_{i=k_1+\cdots +k_{j-1}+1}^{k_1+ \cdots +k_{j}} a_i $
and $\tilde{\epsilon}_j  \doteqdot c_j/\tilde{a}_j$ for each $j$.
Note that the constants $\tilde{\epsilon}_j \tilde{a}_j$ are all distinct and the eigenvalues of $\tilde{A}$ can be
estimated using $($i$)$.
\end{lemma}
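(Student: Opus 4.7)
The key observation is that $A = D + \mathbf{1}\mathbf{a}^{T}$, where $D = \operatorname{diag}(\epsilon_{1}a_{1},\ldots,\epsilon_{m}a_{m})$, $\mathbf{a} = (a_{1},\ldots,a_{m})^{T}$, and $\mathbf{1}$ is the all-ones column vector, i.e., $A$ is a rank-one perturbation of a positive diagonal matrix. The plan is to use Sylvester's determinant identity to reduce the characteristic equation to a scalar rational function, analyze that function by monotonicity, and then in part (ii) split $\mathbb{R}^{m}$ into two natural $A$-invariant subspaces.

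For part (i), I would first compute, for $\lambda \neq \epsilon_{i}a_{i}$,
\begin{equation*}
\det(\lambda I - A) = \det(\lambda I - D)\bigl(1 - \mathbf{a}^{T}(\lambda I - D)^{-1}\mathbf{1}\bigr)
= \prod_{i}(\lambda-\epsilon_{i}a_{i})\cdot\left(1 - \sum_{i=1}^{m}\frac{a_{i}}{\lambda-\epsilon_{i}a_{i}}\right).
\end{equation*}
Since every $\epsilon_{i}a_{i}$ is a simple pole of $g(\lambda) \doteqdot \sum_{i}a_{i}/(\lambda-\epsilon_{i}a_{i})$, these are not roots of $\det(\lambda I - A)$. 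So the eigenvalues are exactly the real roots of $g(\lambda) = 1$. Because $g'(\lambda) = -\sum_{i} a_{i}/(\lambda-\epsilon_{i}a_{i})^{2} < 0$ on each interval of definition, $g$ is strictly decreasing on each of $(-\infty, \epsilon_{m}a_{m})$, $(\epsilon_{i+1}a_{i+1}, \epsilon_{i}a_{i})$ for $i = 1,\ldots,m-1$, and $(\epsilon_{1}a_{1},\infty)$. Checking the boundary limits ($g \to 0$ at $\pm\infty$, $g \to \pm\infty$ at each pole from above/below) shows exactly one root $\lambda_{i}$ in $(\epsilon_{i}a_{i}, \epsilon_{i-1}a_{i-1})$ for $i = 2, \ldots, m$, exactly one root $\lambda_{1}$ in $(\epsilon_{1}a_{1}, \infty)$, and no root below $\epsilon_{m}a_{m}$. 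This yields (\ref{eq lambda two est getby}), the positivity and distinctness of the $\lambda_{i}$'s. For (\ref{eq lambda one est Froben}), I would exploit the inequality $a_{i}/(\lambda_{1}-\epsilon_{i}a_{i}) \geq a_{i}/(\lambda_{1}-\min_{j}\{\epsilon_{j}a_{j}\})$, which combined with $g(\lambda_{1}) = 1$ and summation gives the lower bound; the upper bound is symmetric. Finally, for the Perron-Frobenius eigenvector, the equation $(A - \lambda_{1}I)x = 0$ reads $(\epsilon_{i}a_{i}-\lambda_{1})x_{i} + (\mathbf{a}^{T}x) = 0$, so I would explicitly take $x_{i} \doteqdot 1/(\lambda_{1}-\epsilon_{i}a_{i}) > 0$, which satisfies the equation because $g(\lambda_{1}) = 1$.

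For part (ii), the main step is to produce two complementary $A$-invariant subspaces. Let $I_{j} \subset \{1,\ldots,m\}$ be the index block where $\epsilon_{i}a_{i} = c_{j}$, of size $k_{j}$. Define
\begin{equation*}
W_{1} \doteqdot \{x \in \mathbb{R}^{m} : x_{i} = y_{j} \text{ whenever } i \in I_{j}, \text{ for some } y \in \mathbb{R}^{l}\}, \qquad W_{2} \doteqdot \bigoplus_{j=1}^{l} W_{2,j},
\end{equation*}
where $W_{2,j} = \{x : \operatorname{supp}(x) \subseteq I_{j},\ \sum_{i \in I_{j}} a_{i}x_{i} = 0\}$. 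A direct check shows $\dim W_{1} = l$, $\dim W_{2} = \sum_{j}(k_{j}-1) = m - l$, and $W_{1} \cap W_{2} = 0$ (using $\tilde{a}_{j} > 0$), so $\mathbb{R}^{m} = W_{1} \oplus W_{2}$. For $x \in W_{2,j}$ one computes $Ax = c_{j}x$ because the rank-one part vanishes by the $\mathbf{a}$-orthogonality; this gives $k_{j}-1$ eigenvectors with eigenvalue $c_{j}$. For $x \in W_{1}$ with $x_{i} = y_{j}$ on $I_{j}$, one finds $(Ax)_{i} = c_{j}y_{j} + \sum_{l}\tilde{a}_{l}y_{l}$, which is again constant on each $I_{j}$, so $W_{1}$ is $A$-invariant and the restriction $A|_{W_{1}}$ is represented in the basis of indicator vectors precisely by $\tilde{A}$. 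Since the $\tilde{\epsilon}_{j}\tilde{a}_{j} = c_{j}$ are distinct, part (i) applies to $\tilde{A}$, giving the remaining $l$ eigenvalues with the stated estimates.

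The main obstacle is the bookkeeping for part (ii): one has to verify both invariance and that the restriction to $W_{1}$ has exactly the matrix shape claimed, together with matching the scalar factors $\tilde{\epsilon}_{j} = c_{j}/\tilde{a}_{j}$. Everything else is essentially a direct consequence of the secular equation $g(\lambda) = 1$.
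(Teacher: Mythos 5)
Your proposal is correct. Part (ii) is essentially the paper's argument: your subspaces $W_1$ (block-constant vectors) and $W_2$ (block-supported, $\mathbf{a}$-orthogonal vectors) correspond exactly to the paper's Observations 2 and 1, respectively, and the reduction of $A|_{W_1}$ to $\tilde A$ is the same computation. Part (i), however, takes a genuinely different and cleaner route. The paper evaluates $f(\lambda)=\det(A-\lambda I)$ at each point $\epsilon_i a_i$ by row reduction, observes the sign alternation, and gets the interlacing from the intermediate value theorem; it then separately invokes the Perron--Frobenius theorem (via row sums) for the two-sided bound on $\lambda_1$ and for the positive eigenvector. You instead apply the matrix determinant lemma to reduce everything to the secular equation $g(\lambda)=\sum_i a_i/(\lambda-\epsilon_i a_i)=1$, get the interlacing from the monotonicity and pole structure of $g$, obtain the explicit positive eigenvector $x_i=1/(\lambda_1-\epsilon_i a_i)$ directly from $g(\lambda_1)=1$, and derive the bound \eqref{eq lambda one est Froben} by the elementary estimate $\sum_i a_i/(\lambda_1-\max_j\epsilon_j a_j)\le 1\le\sum_i a_i/(\lambda_1-\min_j\epsilon_j a_j)$. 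Your version is more self-contained (no appeal to Perron--Frobenius) and gives an explicit eigenvector, at the cost of needing the Sylvester/Weinstein--Aronszajn identity; the paper's version is more hands-on with determinants but outsources the $\lambda_1$ estimate to a cited theorem. Both are complete proofs.
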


\begin{proof}
(i) Define $f(\lambda) \doteqdot \det(A-\lambda I_{m \times m})$.
 We compute
 \begin{align*}
 f(\epsilon_1a_1) = & \left | \begin{array}{cccc}
 a_1 & a_2   & \cdots  & a_m \\
a_1  & a_2 + \epsilon_2a_2- \epsilon_1a_1 & \cdots  & a_m   \\
\vdots & \vdots & \vdots  & \vdots \\
a_1  & a_2  & \cdots & a_m + \epsilon_ma_m- \epsilon_1a_1
 \end{array} \right |  \\
 =& \left | \begin{array}{cccc}
 a_1 & a_2 & \cdots  & a_m \\
0 &  \epsilon_2a_2- \epsilon_1a_1 & \cdots  & 0   \\
\vdots & \vdots & \vdots  & \vdots \\
0  & 0  & \cdots &    \epsilon_ma_m- \epsilon_1a_1
 \end{array} \right | \\
 =& a_1 \prod_{k \neq 1} (\epsilon_k a_k - \epsilon_1a_1).
 \end{align*}
 Analogously,  it follows that for any $i$ we have
 \[
(-1)^{m-i}  f(\epsilon_i a_i)=(-1)^{m-i}   a_i \prod_{k \neq i} (\epsilon_k a_k- \epsilon_ia_i) >0.
 \]
 By the intermediate value theorem, the equation
  $f(\lambda)=0$ has $m$ solutions with $\lambda_i \in (\epsilon_ia_i,\epsilon_{i-1}a_{i-1})$
 for $i=2, \cdots, m$ and $\lambda_1 >  \epsilon_1a_1$.

 To get an upper bound and a better lower bound of $\lambda_1 $, we use the
 Perron-Frobenius theorem. Since all the entries of $A$ are positive,
the largest eigenvalue $\lambda_1$ is the so-called Perron-Frobenius eigenvalue
 of $A$  whose corresponding eigenspace is one-dimensional and is spanned by an eigenvector
with positive entries. Furthermore, for any other eigenvalue of $A$ the corresponding eigenvector
has entries consisting of both signs (see [Gan59, p.64], for example, for all these properties).
It is well-known that the Perron-Frobenius eigenvalue is bounded from above and
below respectively by the maximum and minimum of row sums (see [Gan59, p.76], for example).
This implies the estimate of $\lambda_1$ in (i).

(ii)   We use the following two observations to find all the solutions of  eigenvector equation of $A$,
\begin{eqnarray*}
\left ( \left [ \begin{array}{ccc}
 c_1I_{k_1 \times k_1} & \cdots  & 0  \\
\vdots & \vdots  & \vdots \\
0   & \cdots  & c_l I_{k_l \times k_l }
 \end{array} \right ] +
  \left [ \begin{array}{ccc}
 a_1    & \cdots  & a_m  \\
\vdots  & \vdots  & \vdots \\
a_1    & \cdots  & a_m
 \end{array} \right ] \right )
  \left [ \begin{array}{c}
 u_1  \\
\vdots \\
u_m
 \end{array} \right ] =
 \lambda  \left [ \begin{array}{c}
 u_1  \\
\vdots \\
u_m
 \end{array} \right ] .
\end{eqnarray*}

{\em Observation 1.}  We assume that the column  eigenvector is of the form
 \[
 (0, \cdots, 0, u_{k_1+\cdots + k_{j-1}+1}, \cdots, u_{k_1+\cdots +k_{j-1}
 + k_{j}}, 0, \cdots,0)^T,
 \]
 where $j =1, \cdots, l$.
Without loss of generality we only work out the case when $j=1$.
Let $U_1$ be the vector $(u_1, \cdots, u_{k_1})^T$, then the eigenvector equation becomes
\begin{eqnarray*}
 \left [ \begin{array}{c}
 c_1U_1  \\
 0 \\
\vdots \\
0
 \end{array} \right ]    +
  \left [ \begin{array}{c}
 \sum_{i=1}^{k_1} a_iu_i \\
\sum_{i=1}^{k_1} a_iu_i \\
\vdots \\
\sum_{i=1}^{k_1} a_iu_i
 \end{array} \right ]    =
 \left [ \begin{array}{c}
 \lambda U_1  \\
 0\\
\vdots \\
0
 \end{array} \right ] .
\end{eqnarray*}
Hence the solutions are: eigenvalue $\lambda  =c_1$ which is of multiplicity $k_1-1$
and the corresponding eigenspace is defined by the equation $\sum_{i=1}^{k_1} a_iu_i=0$.

{\em Observation 2.}   We assume that the eigenvector satisfies condition $u_1 =\cdots =u_{k_1}
 \doteqdot w_1;  \cdots; u_{k_1+ \cdots+ k_{l-1}+1 }=\cdots=u_{k_1+ \cdots+ k_l }
  =u_{m} \doteqdot w_l$.
If we let $E_j$ denote the vector $(1, \cdots, 1)^T$ in $\mathbb{R}^{k_j}$,
then the eigenvector equation becomes
\begin{eqnarray*}
 \left [ \begin{array}{c}
 c_1w_1E_1  \\
 c_2w_2E_2 \\
\vdots \\
c_l w_l E_l
 \end{array} \right ]    +
  \left [ \begin{array}{c}
(\sum_{j=1}^l ( \sum_{i=k_1+\cdots +k_{j-1}+1}^{k_1+ \cdots +k_{j}} a_i )w_j )E_1 \\
(\sum_{j=1}^l ( \sum_{i=k_1+\cdots +k_{j-1}+1}^{k_1+ \cdots +k_{j}} a_i )w_j )E_2 \\
\vdots \\
(\sum_{j=1}^l ( \sum_{i=k_1+\cdots +k_{j-1}+1}^{k_1+ \cdots +k_{j}} a_i )w_j )E_l
 \end{array} \right ]    =
 \left [ \begin{array}{c}
 \lambda w_1E_1  \\
 \lambda w_2 E_2 \\
\vdots \\
\lambda w_l E_l
 \end{array} \right ] .
\end{eqnarray*}
The above equation is equivalent to the following eigenvector equation for
 $\lambda$ and vector $(w_1, \cdots , w_l)^T$,

\begin{eqnarray*}
\left ( \left [ \begin{array}{ccc}
 \tilde{\epsilon}_1 \tilde{a}_1 & \cdots  & 0 \\
\vdots & \vdots  & \vdots \\
0   & \cdots  & \tilde{\epsilon}_l \tilde{a}_l
 \end{array} \right ] +
  \left [ \begin{array}{ccc}
 \tilde{a}_1    & \cdots  & \tilde{a}_l  \\
\vdots  & \vdots  & \vdots \\
\tilde{a}_1    & \cdots  & \tilde{a}_l
 \end{array} \right ] \right )
  \left [ \begin{array}{c}
 w_1  \\
\vdots \\
w_m
 \end{array} \right ] =
 \lambda  \left [ \begin{array}{c}
 w_1  \\
\vdots \\
w_m
 \end{array} \right ]
\end{eqnarray*}
where $\tilde{\epsilon}_j$ and $\tilde{a}_j$ are defined in the statement of our lemma.
We may apply part (i) to get $l$ positive distinct eigenvalues for this eigenvalue problem and
hence we get  $l$ positive distinct eigenvalues of $A$ which are different from $c_1, \cdots, c_l$.

Combining the two observations together we
get $m$  linearly independent eigenvectors for $A$ since  $(k_1-1) + \cdots + (k_l-1) + l =m$.
\end{proof}

%%%%%%%%%%%%%%%%%%%%%%%%%%%%%%%%%%%%%%%%%%%%%%%%%%%%%%
%%%%%%%%%%%%%%%%%%%%%%%%%%%%%%%%%%%%%%%%%%%%%%%%%%%%%%
%%%%%%%%%%%%%%%%%%%%%%%%%%%%%%%%%%%%%%%%%%%%%%%%%%%%%%
%%%%%%%%%%%%%%%%%%%%%%%%%%%%%%%%%%%%%%%%%%%%%%%%%%%%%%

\end{document}